\numberwithin{equation}{section}
\newtheorem{theorem}{Theorem}[section]
\newtheorem{definition}{Definition}[section]
\newtheorem{prop}{Proposition}[section]
\newtheorem{remark}{Remark}[section]
\newtheorem{lemma}{Lemma}[section]
\newtheorem{cor}{Corollary}[section]
\newcommand{\RR}{\mathbb{R}}
\newcommand{\NN}{\mathbb{N}}
\newcommand{\bu}{\mathbf u}
\newcommand{\bv}{\mathbf v}
\newcommand{\bw}{\mathbf w}
\newcommand{\bx}{\mathbf x}
\newcommand{\balpha}{\boldsymbol \alpha}
\newcommand{\bF}{\mathbf F}
\newcommand{\f}{\mathbf f}
\newcommand{\rw}{\textrm{\rm w}}
\newcommand{\rc}{\textrm{\rm c}}
\newcommand{\rloc}{\textrm{\rm loc}}
\newcommand{\diss}{\textrm{\rm diss}}
\newcommand{\mM}{\mathcal M}
\newcommand{\mX}{\mathcal X}
\newcommand{\mA}{\mathcal A}
\newcommand{\mB}{\mathcal B}
\newcommand{\mC}{\mathcal C}
\newcommand{\mK}{\mathcal K}
\newcommand{\mP}{\mathcal P}
\newcommand{\mY}{\mathcal Y}
\newcommand{\mU}{\mathcal U}
\newcommand{\mV}{\mathcal V}
\newcommand{\mE}{\mathcal E}
\newcommand{\fK}{\mathfrak{K}}
\newcommand{\ve}{\varepsilon}
\newcommand{\wsconv}{\stackrel{w*}{\rightharpoonup}}
\newcommand{\wconv}{\stackrel{wsc*}{\rightharpoonup}}
\newcommand{\rd}{{\text{\rm d}}}
\newcommand{\Cloc}{\mathcal{C}_{\rloc}}
\newcommand{\MX}{\mathcal{M}(X)}
\newcommand{\MXt}{\mathcal{M}(X,\rm{tight})}
\newcommand{\MmXt}{\mathcal{M}(\mX,\rm{tight})}
\newcommand{\PXt}{\mathcal{P}(X,\rm{tight})}
\newcommand{\re}{\preccurlyeq}
\newcommand{\Wr}{W_\sigma^{1,r}}
\newcommand{\Winf}{W_\sigma^{1,\infty}}
\newcommand{\Winfp}{(W_\sigma^{1,\infty})'}
\newcommand{\Wrp}{(W_\sigma^{1,r})'}
\newcommand{\Wrw}{(W_\sigma^{1,r})_{\rm{w}}}
\newcommand{\Tsp}{\mathcal{D}_\sigma} 
\newcommand{\bunu}{\bu^\nu}
\newcommand{\bunup}{\bu^{\nu'}}
\newcommand{\bP}{\mathbb{P}}
\newcommand{\UE}{\mU^\diss}
\begin{document}

\title{On the convergence of trajectory statistical solutions}

\author{Anne C. Bronzi}
\address{Instituto de Matem\'atica, Estat\'istica e Computa\c c\~ao Cient\'ifica,
Universidade Estadual de Campinas (UNICAMP), Brazil}
\email{acbronzi@unicamp.br}

\author{Cecilia F. Mondaini}
\address{Department of Mathematics, Drexel University, USA}
\email{cf823@drexel.edu}

\author{Ricardo M. S. Rosa}
\address{Instituto de Matem\'atica, Universidade Federal do Rio de Janeiro (UFRJ), Brazil}
\email{rrosa@im.ufrj.br}

\begin{abstract} 
In this work, a recently introduced general framework for trajectory statistical solutions is considered, and the question of convergence of families of such solutions is addressed. Conditions for the convergence are given which rely on natural assumptions related to a~priori estimates for the individual solutions of typical approximating problems. The first main result is based on the assumption that the superior limit of suitable families of compact subsets of carriers of the family of trajectory statistical solutions be included in the set of solutions of the limit problem. The second main result is a version of the former in the case in which the approximating family is associated with a well-posed system. These two results are then applied to the inviscid limit of incompressible Navier-Stokes system in two and three spatial dimensions, showing, in particular, the existence of trajectory statistical solutions to the two- and three-dimensional Euler equations, in the context of weak and dissipative solutions, respectively. Another application of the second main result is on the Galerkin approximations of statistical solutions of the three-dimensional Navier-Stokes equations.
\end{abstract}

\keywords{statistical solution; trajectory statistical solution; convergence; Navier-Stokes equations; Euler equations; Galerkin approximation}

\subjclass[2020]{76D06, 35Q30, 35Q31, 35Q35, 60B05, 60B10, 82M10}

\maketitle

\tableofcontents
\newpage

\section{Introduction}\label{sec:Intro}

The theory of statistical solutions for the Navier-Stokes equations, initiated by Foias and Prodi in the early 1970s \cite{Foias72, Foias73, FoiasProdi1976}, and followed by Vishik and Fursikov later in the decade \cite{VishikFursikov77, VishikFursikov78}, has seen many advances in relation to the theory of turbulence in fluid flow problems \cite{Foias74, LadyVershik1977, CFM94, BCFM1995, DoeringTiti1995, Foias97, Fursikov99, FMRT2001, FMRT2001c, Basson2006, ConstantinRamos2007, RRT08, Wang2008, Kelliher2009, Rosa2009, BFMT2019}. The concept has also been adapted to a number of other specific equations, or specific classes of equations, for which a well-defined evolution semigroup does not exist or is not known to exist \cite{IllnerWick1981, Golec1993, Barbablah1995, Haragus1999, Slawik2008, BMR2014}. Later on, \cite{FRT2013} developed a slightly different formulation of the statistical solutions in \cite{VishikFursikov77, VishikFursikov78} which is compatible with those in \cite{Foias72, Foias73, FoiasProdi1976}, in the way that projections of the solutions in this new formulation become solutions in the sense of the latter.

More recently, inspired by \cite{FRT2013}, the authors introduced, in \cite{BMR2016}, a general framework applicable to a wide range of equations and containing the main results on the existence of statistical solutions for an associated initial-valued problem, based on natural and amenable conditions. This greatly facilitates the application of the notion of statistical solution both to the Navier-Stokes equations and to other systems. In this manuscript, we address the problem of \emph{convergence} of families of statistical solutions, within this general framework.

Approximating a given problem is of fundamental importance in every branch of Mathematics, both pure and applied, and in many scientific fields and applications. This is no different in regards to statistical solutions. Be it with the aim of proving the existence of solutions, or in relation to asymptotic analysis, numerical computations and other investigations on the nature of the given problem. With this in mind, we present a couple of general results on the limit of trajectory statistical solutions and apply the results to the Euler equations as inviscid limits of the Navier-Stokes equations and to the Galerkin approximations of the Navier-Stokes equations. The inviscid limit is treated both in the two- and three-dimensional cases. In particular, our results show the existence of trajectory statistical solution to the limit Euler equations. The Galerkin approximation is considered in the three-dimensional viscous case, yielding, in particular, a different proof of existence of statistical solutions for the three-dimensional Navier-Stokes equations than given in \cite{FRT2013,BMR2016}, see \cref{rmk:exist:ss:3D:NSE}.

Our two main results concern conditions for the limit of trajectory statistical solutions, of a given family of problems, to be a trajectory statistical solution of the limit problem. The first result, \cref{Main1}, concerns the more general case of a family of trajectory statistical solutions associated with an approximate problem which is not necessarily well-posed. The second result, \cref{Main2alt}, concerns the special case in which the family of trajectory statistical solutions of the approximate problem is associated with a well-defined semigroup of individual solutions, although, at the limit, the well-posedness may not stand.

We recall that a $\mathcal{U}$-trajectory statistical solution, as introduced in \cite{BMR2016}, is a Borel probability measure $\rho$ which is tight and is carried by a Borel subset of a set $\mathcal{U}$ on a space $\mathcal X=\mathcal C_{loc}(I,X)$ of continuous functions from a real interval $I$ into a Hausdorff space $X$, with $\mathcal{X}$ endowed with the topology of uniform convergence on compact subsets of $I$ (see \cref{def-stat-sol}). The term \emph{solution} here is in regards to the \emph{problem} of finding such Borel probability measure carried inside $\mathcal{U}$. This is not a problem \emph{per se,} because any Dirac measure carried at an individual element in $\mathcal{U}$ is a trajectory statistical solution, but the problem becomes nontrivial when associated with an initial condition $\Pi_{t_0}\rho = \mu_0$, where $\Pi_{t_0}:\mathcal{X} \rightarrow X$ is the projection at a ``initial" time $t_0\in I$ and $\mu_0$ is a given ``initial" measure on $X$. In applications, $\mathcal{U}$ is, for example, associated with the set of solutions of a given differential equation, with values in a phase space $X$, such as the Leray-Hopf weak solutions of the 3D Navier-Stokes equations.

Suppose now that we have a family $\{\rho_\ve\}_{\ve \in \mE}$ of $\mU_\ve$-trajectory statistical solutions, for some index set $\mE$ and with $\mU_\ve \subset \mX$. Think of $\mU_\ve$ as the space of solutions of a given differential equation depending on a parameter $\ve$ (e.g. the Leray-Hopf weak solutions of the 3D Navier-Stokes equations with the viscosity as the parameter, or a numerical approximation of a given equation depending on a certain discretization parameter) and of $\mU$ as a set of solutions of the limit problem (e.g. dissipative solutions of the Euler equations). We establish, in \cref{Main1}, suitable conditions on the net $\{\rho_\ve\}_{\ve \in \mE}$ so that it converges (passing to a subnet if necessary), in a certain sense, to a $\mU$-trajectory statistical solution $\rho$. More precisely, \cref{Main1} imposes a uniform tightness assumption on $\{\rho_\ve\}_\ve$ along with a suitable condition relating the limit of compact subsets of $\mU_\ve$ to $\mU$, i.e. concerning the limit of approximate individual solutions.

\cref{Main2alt} treats the special case in applications where each member in the family of approximating equations possesses a well-defined solution operator $S_\ve: X \to \mX$. Namely, $S_\ve$ takes each initial datum $u_{0,\ve} \in X$ to the unique trajectory $u_\ve = u_\ve(t)$ in $\mX$ of the corresponding approximate equation satisfying the initial condition $u_\ve(t_0) = u_{0,\ve}$. In this case, a set of assumptions is required from the family of operators $\{S_\ve\}_{\ve \in \mE}$ to guarantee that, given any initial probability measure $\mu_0$ on $X$ and suitable measures $\{\mu_{\ve}\}_{\ve \in \mE}$ on $X$ approximating $\mu_0$ in a certain sense, the family of measures $\rho_\ve = S_\ve \mu_\ve$, $\ve \in \mE$, on $\mX$ has a convergent subnet to a $\mU$-trajectory statistical solution $\rho$ such that $\Pi_{t_0} \rho = \mu_0$. 

In applications, such assumptions translate to verifying the following for the approximating equations: (i) continuous dependence of solutions of each approximating equation with respect to initial data lying in compact sets; (ii) suitable parameter-uniform a priori estimates; and (iii) convergence of individual solutions of the approximating systems starting from a fixed compact set towards a solution of the limit equation. This latter condition is weaker than the corresponding condition in \cref{Main1} and more natural in this context, and for this reason we do not apply \cref{Main1} directly to prove \cref{Main2alt}; see \cref{rmkthm2vsthm1}.

The applications are treated in detail in \cref{secapplications}. The first example, in \cref{subsec:2D:NSE:Euler}, concerns the inviscid limit of the Navier-Stokes equations to the Euler equations in the two-dimensional case, illustrating an application of \cref{Main2alt}. We consider, more precisely, the set of weak solutions of the 2D Euler equations with periodic boundary conditions and initial vorticity in $L^r$, with $2 \leq r < \infty$. In this case, a semigroup is not known to exist at the inviscid limit, but the viscous approximation has a well defined semigroup associated to weak solutions of the 2D Navier-Stokes equations under this setting (see \cref{MainThmApp2D}).

The second example, in \cref{subsec:3D:NSE:Euler}, is the inviscid limit of the Navier-Stokes equations to the Euler equations in three dimensions, where we consider periodic boundary conditions and initial data in $L^2$. For the 3D Euler equations, we consider the corresponding set of \emph{dissipative solutions}, whereas for the 3D Navier-Stokes equations we consider the set of \emph{Leray-Hopf weak solutions}. This provides an application of \cref{Main1}, given that no semigroup is available under this setting, neither at the limit, nor for the approximating family (see \cref{MainThmApp3D}). Here, the verification of item (iii), namely the convergence of individual solutions of the 3D Navier-Stokes equations towards a solution of 3D Euler is in general a delicate issue, but is nevertheless known to hold within the context of dissipative solutions for Euler. See a more detailed discussion on this in \cref{subsec:setting}, and also \cref{thm3dNStoEuler}.

Our final example is given in \cref{subsec:3D:Galerkin:NSE} and concerns a spectral spatial discretization of the 3D Navier-Stokes equations given by the standard Galerkin approximation. Since the initial-value problem associated to the Galerkin system is well-posed, this gives another application of \cref{Main2alt}. 
Moreover, as we point out in \cref{rmk:Pm:mu0} and \cref{rmk:mu0m:montecarlo} below, the flexibility provided by the framework of \cref{Main2alt} with the approximating initial measures $\{\mu_\ve\}_{\ve \in \mE}$ allows not only for the natural example given by Galerkin projections of a given initial measure $\mu_0$ for the limiting system, namely $\mu_\ve = \mu_m = P_m \mu_0$, but also the example of Monte Carlo approximations of $\mu_0$, provided the required conditions from \cref{Main2alt} are met. 

Regarding the first two applications, we first note that the two-dimensional inviscid limit that we present here for the periodic case has also been treated by Wagner and Wiedemann \cite{WW2022}, in the whole space, with no forcing term. The result in \cite{WW2022} uses the general framework given in \cite{BMR2016}, and we take this opportunity to show how the general framework developed here, specifically in \cref{Main2alt}, can be applied to simplify the corresponding proof. Moreover, differently than \cite{WW2022}, our application allows the presence of a time-dependent forcing term.

In regard to the application for the inviscid limit in 3D, we point out that a similar question was tackled in \cite{FjordholmMishraWeber2024}, although in the context of a certain notion of statistical solution in phase space called \emph{Friedman-Keller statistical solution} \cite{FjordholmLanthalerMishra2017,FjordholmLyeMishraWeber2020}, which particularly takes into account the temporal evolution of multi-point spatial correlations in the flow. Specifically, \cite{FjordholmMishraWeber2024} shows two main results. The first concerns an equivalence between the definitions of Friedman-Keller statistical solution and Foias-Prodi statistical solution from \cite{Foias72, Foias73, FoiasProdi1976} for the 3D Navier-Stokes equations. Secondly, it is shown that, under a certain statistical scaling assumption, any suitable sequence of Friedman-Keller statistical solutions for the 3D Navier-Stokes equations converges, up to a subsequence, towards a corresponding one for the 3D Euler equations under the inviscid limit, in a certain time-averaged sense.

While the statements of our general convergence results, \cref{Main1} and \cref{Main2alt}, are given with respect to statistical solutions in trajectory space, we note that a general notion of statistical solution in phase space was given in our previous work, \cite{BMR2016}. These consist of time-parametrized families of Borel probability measures on $X$ satisfying suitable regularity conditions and a Liouville-type equation associated with an evolution equation $\mathrm{d}u/\mathrm{d}t = F(t, u)$. We also show in \cite{BMR2016} that any suitably regular trajectory statistical solution $\rho$ on $\mX$ yields a phase-space  statistical solution via the time projections $\mu_t = \Pi_t\rho$, $t \in I$, called a \emph{projected statistical solution}, although the converse might not necessarily hold. As such, if $\{\Pi_t \rho_\ve \}_{t \in I}$, $\ve \in \mE$, is a collection of projected statistical solutions for which $\{\rho_\ve\}_{\ve \in \mE}$ satisfies our conditions guaranteeing convergence to a trajectory statistical solution $\rho$ in $\ve$, then it follows immediately from the continuity of the projection operator $\Pi_t$, together with the notion of convergence in the space of probability measures we consider, that $\Pi_t \rho_\ve$ converges to $\Pi_t \rho$ in $\ve$, for each $t \in I$. Therefore, under such conditions and provided $\rho$ is sufficiently regular, the projected statistical solutions $\{\Pi_t \rho_\ve \}_{t \in I}$, $\ve \in \mE$, converge to a projected statistical solution of the limit problem, namely $\{\Pi_t \rho\}_{t \in I}$.

Moreover, we recall that, as shown in one of the applications from \cite{BMR2016}, any suitable projected statistical solution of the Navier-Stokes equations is also a Foias-Prodi statistical solution, and hence it corresponds to a Friedman-Keller statistical solution as in \cite{FjordholmMishraWeber2024}. Thus, in comparison to the aforementioned inviscid convergence result from \cite{FjordholmMishraWeber2024}, it would be interesting to investigate: (a) whether a projected statistical solution of the Euler equations corresponds to a solution in the Friedman-Keller sense, or, more generally, whether the notion of phase-space statistical solutions of the Euler equations, in the sense of \cite{BMR2016}, is equivalent to the concept of Friedman-Keller statistical solutions for such equations; 
and (b) what is the relation between the required assumptions and type of convergence from the result in \cite{FjordholmMishraWeber2024} and the  pointwise-in-time convergence for projected statistical solutions implied by our current results. We leave such investigation for future work.

We emphasize that, besides these illustrative applications, the framework is quite general and several other limiting problems fit within the scope of our theory. For example, the results apply to various different numerical discretizations; to the viscous approximations of the inviscid magnetohydrodynamic (MHD) equations; to the NSE-$\alpha$ and MHD-$\alpha$ models as regularized approximations of the NSE and MHD equations, respectively; to compressible approximations of the incompressible 3D NSE equations; and to approximations of other models such as reaction-diffusion equations and nonlinear wave equations \cite{BronziRosa2014,BMR2016}.

To further connect with the existing literature, we mention that there are a number of previous results concerning the limit of statistical solutions in various senses, with the majority given in the context of the Navier-Stokes equations. Those include the inviscid limit of the Navier-Stokes equations (or a damped version of it) towards the Euler equations in \cite{Chae1991, ConstantinRamos2007}, besides the previously mentioned work \cite{WW2022}; the vanishing $\alpha$ limit of the $\alpha$-Navier-Stokes equations \cite{VTC2007, BronziRosa2014}; the Navier-Stokes equations as the limit of the viscoelastic Navier-Stokes-Voigt model \cite{RamosTiti2010}; and the infinite Prandtl number limit of Rayleigh-B\'enard convection \cite{Wang2008}. Our results, however, instead of focusing on a specific model, apply to a general framework that can be more easily verified in specific cases.

It is also worth mentioning a yet another notion of statistical solution based on the concept of Markov selection, developed in \cite{CardonaKapitanskii2020}, for general class of evolutionary problems, and in \cite{FanelliFeireisl2020}, in the context of weak solutions of the barotropic Navier-Stokes systems, inspired by previous works in the context of stochastic equations \cite{FlandoliRomito2008, BreitFeireislHofmanova2020}. The connection of this type of statistical solution with our theory and the limiting process of such solutions is not clear but is currently under investigation.

This manuscript is organized as follows. \cref{sec:prelim} recalls the necessary background on certain functional analytical and measure theoretical tools, including the notions of convergence in spaces of probability measures that we consider. Our general results on convergence of statistical solutions are given in \cref{secAbstractRes}. Finally, \cref{secapplications} presents our applications of these general results, namely the convergence of trajectory statistical solutions in the 2D and 3D inviscid limits, and the Galerkin approximations.

\section{Preliminaries}\label{sec:prelim}

In this section, we briefly recall the basic topological and measure theoretical concepts underlying our framework. For further details, we refer to e.g. \cite{AB,Bogachev2007,BMR2016,Folland,Topsoebook}.

\subsection{Functional setting}

Given a topological vector space  $X$, we denote its dual by $X'$ and the corresponding duality product as $\langle \cdot, \cdot\rangle_{X',X}$.
We employ the notation $X_\rw$ to indicate that $X$ is endowed with its weak topology, whereas $X_{\rw^*}'$ stands for $X'$ endowed with the weak-star topology. Notice that, for any topological vector space $X$, the space $X_{\rw^*}'$ is always a Hausdorff locally convex topological vector space (\cite[Section 1.11.1]{Edwards1965}). Further, if $X$ is in particular a Banach space, we denote its norm by 
$\|\cdot\|_X$, and by $\|\cdot\|_{X'}$ the usual operator norm in the dual space $X'$.

For any Hausdorff space $X$ and interval $I \subset \RR$, we denote by $\mC(I,X)$ the \textbf{space of continuous paths}
in $X$ defined on $I$, i.e. the space of all functions $u: I \rightarrow X$ which are continuous. When $\mC(I,X)$ is endowed with the compact-open topology, we denote $\mX = \Cloc(I,X)$. Here, the subscript ``\emph{\rloc}'' refers to the fact that this topology is based on compact subintervals of $I$. We recall that when $X$ is a uniform space, the \textbf{compact-open topology} in $\Cloc(I,X)$ coincides with the \textbf{topology of uniform convergence on compact subsets} 
\cite[Theorem 7.11]{Kelley75}. In particular, this holds when $X$ is a topological vector space, which is the case in both applications presented in \cref{secapplications}.

For any $t\in I$, let $\Pi_{t}:\mX\rightarrow X$ be the ``projection'' map at time $t$, defined by
\begin{align}\label{defprojectionop}
\Pi_{t}u = u(t), \quad \mbox{ for all } u \in \mX.
\end{align}
Moreover, given any subset $I' \subset I$, define $\Pi_{I'}: \Cloc(I,X) \to \Cloc(I',X)$ to be the restriction operator 
\begin{align}\label{def:restriction:op}
	\Pi_{I'} u = u|_{I'}, \quad \mbox{ for all } u \in \Cloc(I,X).
\end{align}
It is readily verified that $\Pi_t$ and $\Pi_{I'}$ are continuous with respect to the compact-open topology.

We also consider the space of bounded and continuous real-valued functions on $X$, denoted by $\mC_b(X)$. When $X$ is a subset of 
$\mathbb R^m$, $m\in \mathbb N$, we further consider the space $\mC_c^\infty(X)$ of infinitely differentiable real-valued functions on $X$ which 
are compactly supported in the interior of $X$.

When $X$ is a Hausdorff topological space, $(\Gamma,\preccurlyeq)$ is a directed set, and $\{A_\gamma\}_{\gamma\in \Gamma}$ is a net in $X$, we recall the definitions of \textbf{topological inferior and superior limits} (see \cite[Definition 2.2.3]{BIbook}, where they are called \emph{interior and exterior limits,} and also \cite[Definition 5.2.1 and Proposition 5.2.2]{B93}, where they are called \emph{lower and upper closed limits}, respectively):
\[\liminf_\gamma A_{\gamma}=\bigcap\left\{ \overline{\bigcup_{\lambda \in \Lambda} A_\lambda}: \Lambda \mbox{ is a cofinal subset of }\Gamma\right\},\]
\[\limsup_\gamma A_{\gamma}=\bigcap\left\{ \overline{\bigcup_{\lambda \in \Lambda} A_\lambda}: \Lambda \mbox{ is a terminal subset of }\Gamma\right\}.\]
In these definitions, the overline denotes the closure of the set under the topology of $X$; a subset $\Lambda$ is cofinal in $\Gamma$ when for every $\gamma\in \Gamma$, there exists $\lambda \in \Lambda$ such that $\gamma \preccurlyeq \lambda$ (like a subsequence); and a subset $\Lambda$ is terminal in $\Gamma$ when there exists $\gamma\in \Gamma$ such that $\Lambda = \{\lambda\in\Gamma; \;\gamma \preccurlyeq \lambda\}$ (like the tails of a sequence). Note that these definitions of topological inferior and superior limits are different from the set-theoretic limits, where no topological closure is used in the definitions.

When both limits agree, we define the result as the \textbf{topological limit} of the net: 
\[ \lim_\gamma A_\gamma = \limsup_\gamma A_\gamma = \liminf_\gamma A_\gamma.
\]

In this work, we only use the superior limit, for which the following characterization is useful, relating it to the classical definition for sequences (see \cite[Theorem 2.10]{F60}):
\begin{align}\label{def:limsup}
	\limsup_\gamma A_\gamma = \bigcap_{\gamma\in \Gamma}\overline{\bigcup_{\gamma \preccurlyeq \beta} A_\beta}.
\end{align}

\subsection{Elements of measure theory}\label{subsec:meas:theory}

Consider a topological space $X$ and let $\mB_X$ denote the $\sigma$-algebra of Borel sets in $X$. We denote by $\mM(X)$ the set of finite Borel measures in $X$, and by $\mP(X)$ its subset of all Borel probability measures in $X$.

Given a family $\fK$ of Borel sets in $X$, we say that a measure $\mu\in \mM(X)$ is \textbf{inner regular with respect to} $\fK$ if for every set $A\in \mB_X$,
\[
\mu(A) = \sup \{\mu(K) : K\in \fK, \;K \subset A\}.
\]
A measure $\mu\in \mM(X)$ is \textbf{tight} or \textbf{Radon} if $\mu$ is inner regular with respect to the family of all compact subsets of $X$. Moreover, a measure $\mu\in \mM(X)$ is called \textbf{outer regular} if for every set $A\in \mB_X$,
\[
\mu(A) = \inf \{\mu(U) : U \mbox{ is open}, \;A \subset U\}.
\]

A net $\{\mu_\gamma\}_{\gamma \in \Gamma}$ of measures in $\mM(X)$ is said to be \textbf{uniformly tight} if for every $\varepsilon>0$ there
exists a compact set $K\subset X$ such that 
\[
\mu_\gamma(X\setminus K)<\varepsilon, \quad \mbox{ for all } \gamma \in \Gamma.
\]
Here we follow the terminology in \cite{Bogachev2007}, and we remark that such concept of uniform tightness does not necessarily imply tightness of each measure $\mu_\gamma$. In what follows, however, we consider uniformly tight families of tight measures.

We denote the set of all measures $\mu \in \MX$ which are tight by $\MXt$, and its subset encompassing all tight Borel probability measures by $\PXt$.

Let us recall some useful facts regarding these definitions. First, every tight finite Borel measure on a Hausdorff space $X$ is outer regular, see \cite[Theorem 12.4]{AB}. Furthermore, if $X$ is a Polish space then every finite Borel measure on $X$ is tight, see \cite[Theorem 12.7]{AB}. We will make use of this latter result in \cref{secapplications} in connection with the fact that in every separable Banach space $X$, the Borel $\sigma$-algebras generated by the strong and weak topologies coincide, i.e. $\mB_X = \mB_{X_\rw}$, see e.g. \cite[Section 2.2]{Mondainithesis}.

Now let $X$ and $Y$ be Hausdorff spaces and consider a Borel measurable function $F: X\rightarrow Y$. Then every measure $\mu$ on $\mB_X$ induces a measure $F\mu$ on $\mB_Y$ known as the \textbf{push-forward of $\mu$ by $F$} and defined as 
\[
F\mu(E) = \mu(F^{-1}(E)), \quad \mbox{for all } E\in\mB_Y.
\]
Moreover, if $\varphi: Y\rightarrow \mathbb R$ is a $F\mu$-integrable function then $\varphi\circ F$ is
$\mu$-integrable and the following change of variables formula holds
\begin{equation}\label{induced-measure}
\int_X \varphi(F(x))d\mu(x)=\int_Y \varphi(y)dF\mu(y),
\end{equation}
see e.g. \cite{AB}. Clearly, if $\mu$ is a tight measure and $F$ is a continuous function then the push-forward measure $F\mu$ is also tight.

Next, we present a generalization for nets of the continuity of a
finite measure with respect to a decreasing sequence of measurable sets (see
\cite[Theorem 1.8]{Folland} or \cite[Lemma 4.51]{AB}). The following proof is based on similar ideas from \cite[Proposition 10]{kelley2}.

\begin{lemma}\label{contabove}(Continuity from above)
Let $X$ be a compact Hausdorff space and let $\mu\in \mM(X)$ be an outer regular
measure. Then, for any  monotone decreasing
net $(E_\gamma)_{\gamma\in \Gamma}$ of compact sets in $X$,
$E=\bigcap_{\gamma\in\Gamma}E_\gamma$ is a compact set and
\[\mu(E)=\lim_{\gamma\in\Gamma} \mu(E_\gamma).\]
\end{lemma}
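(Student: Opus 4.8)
The plan is to prove the compactness of $E$ first, and then to establish the equality $\mu(E) = \lim_\gamma \mu(E_\gamma)$ by proving two opposite inequalities, where the only substantive one uses outer regularity. For the compactness, I would observe that each $E_\gamma$, being compact in the Hausdorff space $X$, is closed; hence $E = \bigcap_{\gamma\in\Gamma} E_\gamma$ is an intersection of closed sets and thus closed, and as a closed subset of the compact space $X$ it is itself compact.

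For the measure identity, I would first exploit monotonicity. Since the net is decreasing—so that $\gamma \preccurlyeq \delta$ forces $E_\delta \subset E_\gamma$—the real net $(\mu(E_\gamma))_{\gamma\in\Gamma}$ is monotone decreasing and bounded below by $0$, hence converges to its infimum $\inf_{\gamma} \mu(E_\gamma)$. From $E \subset E_\gamma$ for every $\gamma$ I get $\mu(E) \le \mu(E_\gamma)$, and passing to the limit yields the easy inequality $\mu(E) \le \lim_{\gamma} \mu(E_\gamma)$.

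The reverse inequality is where outer regularity is needed. Fixing $\varepsilon > 0$, I would use outer regularity of $\mu$ to select an open set $U \supset E$ with $\mu(U) < \mu(E) + \varepsilon$. The key observation is that $\bigcap_{\gamma}(E_\gamma \setminus U) = \left(\bigcap_{\gamma} E_\gamma\right)\setminus U = E\setminus U = \emptyset$. Each set $E_\gamma \setminus U = E_\gamma \cap (X\setminus U)$ is closed, hence compact, so $\{E_\gamma \setminus U\}_{\gamma\in\Gamma}$ is a family of compact sets with empty intersection. By the finite intersection property of the compact space $X$, there exist finitely many indices $\gamma_1,\dots,\gamma_n$ with $\bigcap_{i=1}^n (E_{\gamma_i}\setminus U) = \emptyset$. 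Because $\Gamma$ is directed, I can choose $\gamma_0$ with $\gamma_i \preccurlyeq \gamma_0$ for all $i$; monotonicity then gives $E_{\gamma_0} \subset E_{\gamma_i}$ for each $i$, whence $E_{\gamma_0}\setminus U \subset \bigcap_{i=1}^n (E_{\gamma_i}\setminus U) = \emptyset$, i.e. $E_{\gamma_0} \subset U$. Consequently $\mu(E_{\gamma_0}) \le \mu(U) < \mu(E) + \varepsilon$, and since the net $(\mu(E_\gamma))_\gamma$ decreases to its infimum, $\inf_\gamma \mu(E_\gamma) \le \mu(E_{\gamma_0}) < \mu(E) + \varepsilon$. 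Letting $\varepsilon \to 0$ gives $\lim_\gamma \mu(E_\gamma) \le \mu(E)$, and combining the two inequalities completes the proof.

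I expect the main obstacle to be precisely the passage from the net indexing to a single index: the finite intersection property only yields finitely many $\gamma_i$ whose associated sets already have empty intersection, and it is the directedness of $\Gamma$ together with the monotonicity of the net that allows these to be absorbed into one $\gamma_0$ with $E_{\gamma_0}\subset U$. This is exactly the step that upgrades the classical sequential continuity-from-above to the setting of nets, and it is where the compactness hypothesis on $X$ is genuinely used.
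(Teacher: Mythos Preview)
Your proof is correct and follows essentially the same approach as the paper's: both use outer regularity to trap $E$ in an open set $U$ with $\mu(U)<\mu(E)+\varepsilon$, then a compactness/finite-intersection argument combined with directedness to find a single index $\gamma_0$ with $E_{\gamma_0}\subset U$. The only cosmetic difference is that the paper phrases the compactness step as extracting a finite subcover of $U^c$ from the open cover $\{E_\gamma^c\}$, whereas you phrase it via the finite intersection property applied to the compact sets $E_\gamma\setminus U$; these are dual formulations of the same argument.
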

\begin{proof}
It is clear that $E=\bigcap_{\gamma\in\Gamma}E_\gamma$ is a closed subset of a
compact set so that it is compact. Since $\mu$ is outer regular
then, for every $\ve>0$, there exists an open set $U\subset X$ such that
$E\subset U$ and $\mu(U)<\mu(E)+\ve$. Observe that the compactness of $X$
implies the compactness of $U^c$ and, since each $E_\gamma$ is compact, we also
have that $E_\gamma^c$ is an open set on $X$. Furthermore,
$U^c\subset \bigcup_{\gamma\in\Gamma}E_\gamma^c$ so that there exists a
finite subset $\{\gamma_1,\ldots,\gamma_N\}\subset \Gamma$ such that
$\bigcap_{i=1}^N E_{\gamma_i}\subset U$. Since $(\Gamma,\preccurlyeq)$ is a directed set
then there exists $\bar{\gamma}\in\Gamma$ such that $\gamma_i\preccurlyeq\bar{\gamma}$,
for all $i=1,\ldots,N$. Since the net $(E_\gamma)_{\gamma\in \Gamma}$ is monotone decreasing, we have that $E_{\bar\gamma}\subset E_{\gamma_i}$ for every $i=1, \ldots, N$.  Hence, $E_{\bar{\gamma}}\subset \bigcap_{i=1}^N E_{\gamma_i}\subset U$. Therefore,
$\inf_{\gamma\in \Gamma}\mu(E_\gamma)\leq \mu(U) <\mu(E)+\ve$. Since $\ve>0$ is
arbitrary we conclude that $\inf_{\gamma\in\Gamma}\mu(E_\gamma)\leq\mu(E)$. On
the other hand, it is clear that $\mu(E)\leq
\inf_{\gamma\in\Gamma}\mu(E_\gamma)$. Thus,
\[\lim_{\gamma\in\Gamma}
\mu(E_\gamma)=\inf_{\gamma\in\Gamma}\mu(E_\gamma)=\mu(E),\]
as desired.
\end{proof}

\subsection{Topologies for measure spaces and related results}\label{subsec:top:meas}

We recall the definitions of two specific topologies in $\MX$, for any topological space $X$. First, the \textbf{weak-star topology} is the smallest one for which the mappings $\mu \mapsto \mu(f)  = \int_X f(x) d \mu(x)$ are continuous, for every bounded and continuous real-valued function $f$ on $X$, i.e. $f \in \mC_b(X)$. If a net $\{\mu_\alpha\}_\alpha$ converges to $\mu$ with respect to this topology, we denote $\mu_\alpha \wsconv \mu$. A less common topology is the one defined by Topsoe in \cite{Topsoebook}, which is the smallest one for which the mappings $\mu \mapsto \mu(f)$ are upper semicontinuous, for every bounded and upper semi-continuous real-valued function $f$ on $X$. Topsoe calls this topology the ``weak topology'', but in order to avoid any confusion we call it here the \textbf{weak-star semicontinuity topology} on $\MX$. We denote convergence of a net $\{\mu_\alpha\}_\alpha$ to $\mu$ with respect to this latter topology as $\mu_\alpha \wconv \mu$.

From these definitions, it is not difficult to see that the weak-star topology is in general weaker than the weak-star semi-continuity topology. Moreover, according to \cref{portmanteau} below, if $X$ is a completely regular Hausdorff space, then these two topologies coincide when restricted to the space $\MXt$.

The following lemma summarizes some properties and useful characterizations for these topologies, see \cite[Theorem 8.1]{Topsoebook}.

\begin{lemma}\label{portmanteau}Let $X$ be a Hausdorff space.
For a net $\{\mu_\alpha\}_\alpha$ in $\MX$ and $\mu\in \MX$,
consider the following statements:
\begin{enumerate}[label={(\roman*)}]
\item \label{PMwscstar} $\mu_\alpha \wconv\mu$;
\item \label{PMlimsup} $\limsup \mu_\alpha (f)\leq \mu(f)$, for all bounded upper
semicontinuous function $f$;
\item \label{PMliminf} $\liminf \mu_\alpha (f)\geq \mu(f)$, for all bounded lower
semicontinuous function $f$;
\item \label{PMclosed} $\lim_\alpha \mu_\alpha(X)=\mu(X)$ and $\limsup \mu_\alpha (F)\leq \mu(F)$, for all closed set $F\subset X$;
\item \label{PMlimopen} $\lim_\alpha \mu_\alpha(X)=\mu(X)$ and $\liminf \mu_\alpha (G)\geq \mu(G)$, for all open set $G\subset X$;
\item \label{PMwstar} $\mu_\alpha \wsconv \mu$.
\end{enumerate}

Then the first five statements are equivalent and each of them implies the last one.

Furthermore, if $X$ is also completely regular and $\mu \in \MXt$, then all six statements are equivalent.
\end{lemma}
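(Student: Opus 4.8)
The plan is to establish the cycle of equivalences among \ref{PMwscstar}--\ref{PMlimopen} first, and then treat the two implications involving \ref{PMwstar} separately, the second of which is the only place where complete regularity and tightness are used. I would begin by recording \ref{PMwscstar}$\iff$\ref{PMlimsup} directly from the definition of the weak-star semicontinuity topology: since it is by construction the coarsest topology on $\MX$ for which every evaluation $\mu\mapsto\mu(f)$, with $f$ bounded and upper semicontinuous, is upper semicontinuous, a subbasis of neighborhoods of $\mu$ is given by the sets $\{\nu:\nu(f)<a\}$ with $\mu(f)<a$. Unwinding net convergence in this initial topology — using that a net is eventually in a finite intersection precisely when it is eventually in each member — shows that $\mu_\alpha\wconv\mu$ is exactly the requirement $\limsup_\alpha\mu_\alpha(f)\le\mu(f)$ for all bounded upper semicontinuous $f$. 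Replacing $f$ by $-f$ and using $\limsup(-\mu_\alpha(f))=-\liminf\mu_\alpha(f)$ then gives \ref{PMlimsup}$\iff$\ref{PMliminf} immediately.

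For \ref{PMlimsup},\ref{PMliminf}$\Rightarrow$\ref{PMclosed},\ref{PMlimopen}, I would test against the constant $f\equiv1$ (both upper and lower semicontinuous) to obtain $\lim_\alpha\mu_\alpha(X)=\mu(X)$, against the indicator $\mathbf 1_F$ of a closed set $F$ (upper semicontinuous) for $\limsup_\alpha\mu_\alpha(F)\le\mu(F)$, and symmetrically against $\mathbf 1_G$, $G$ open (lower semicontinuous), for \ref{PMlimopen}; the equivalence \ref{PMclosed}$\iff$\ref{PMlimopen} follows by complementation, subtracting the closed-set estimate from the total-mass identity. The one substantive analytic step is the return \ref{PMclosed}$\Rightarrow$\ref{PMlimsup}: for a bounded upper semicontinuous $f$, normalized to $0\le f\le M$, I would use the horizontal (layer-cake) approximation $g_n=\frac Mn\sum_{k=1}^n\mathbf 1_{\{f\ge kM/n\}}$, noting that each superlevel set $\{f\ge kM/n\}$ is closed by upper semicontinuity and that $g_n\le f\le g_n+M/n$. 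Applying the closed-set estimate to each of the finitely many indicators, together with subadditivity of $\limsup$ over finite sums and the total-mass convergence to absorb the uniform error $M/n$, yields $\limsup_\alpha\mu_\alpha(f)\le\mu(f)+(M/n)\mu(X)$; letting $n\to\infty$ closes the loop, and a constant shift handles general bounded $f$. Finally, that any of the first five statements implies \ref{PMwstar} is immediate, since a bounded continuous $f$ is both upper and lower semicontinuous, so \ref{PMlimsup} and \ref{PMliminf} sandwich $\lim_\alpha\mu_\alpha(f)=\mu(f)$ for all $f\in\mC_b(X)$; this matches the earlier observation that the weak-star topology is coarser than the weak-star semicontinuity topology.

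The crux is the converse \ref{PMwstar}$\Rightarrow$\ref{PMlimopen} under the extra hypotheses that $X$ is completely regular and $\mu\in\MXt$, which I would prove in the form of the open-set inequality. Taking $f\equiv1$ gives $\lim_\alpha\mu_\alpha(X)=\mu(X)$. Fix an open set $G$ and $\ve>0$. Since $\mu$ is tight, it is inner regular with respect to compact sets, so there is a compact $K\subset G$ with $\mu(K)>\mu(G)-\ve$. Because $X$ is completely regular Hausdorff and $K$ is compact and disjoint from the closed set $X\setminus G$, a standard Urysohn-type argument produces a continuous $g:X\to[0,1]$ with $g\equiv1$ on $K$ and $g\equiv0$ off $G$: cover $K$ by finitely many sets $\{g_{x_i}>1/2\}$ arising from complete regularity, set $h=\max_i g_{x_i}$, and compose with $t\mapsto\min(1,2t)$. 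Then $\mathbf 1_K\le g\le\mathbf 1_G$ with $g\in\mC_b(X)$, so weak-star convergence gives $\liminf_\alpha\mu_\alpha(G)\ge\liminf_\alpha\mu_\alpha(g)=\mu(g)\ge\mu(K)>\mu(G)-\ve$; letting $\ve\to0$ yields \ref{PMlimopen}, and hence all six statements are equivalent. The main obstacle lies exactly here: weak-star convergence only controls integrals of continuous functions, so one must manufacture a continuous function trapped between $\mathbf 1_K$ and $\mathbf 1_G$, and this is precisely where tightness (to localize $G$ from the inside to a compact $K$) and complete regularity (to separate the compact $K$ from the closed complement of $G$) must be used in tandem.
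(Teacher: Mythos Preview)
Your proof is correct and complete. The paper, however, does not prove this lemma at all: it is stated without proof and attributed to \cite[Theorem 8.1]{Topsoebook}. So there is no ``paper's own proof'' to compare against beyond that citation.

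Your argument is in fact the standard one and is essentially what one finds in Tops{\o}e's book. Two small points worth making explicit in a polished write-up: first, in the equivalence \ref{PMwscstar}$\iff$\ref{PMlimsup}, you correctly use that for a net in a space with a given subbasis, eventual membership in each subbasic neighborhood is equivalent to eventual membership in each finite intersection (directedness of the index set is what makes this work). Second, in the Urysohn-type separation step you carefully reduce separation of a compact set from a closed set to finitely many point-from-closed-set separations via complete regularity; this is exactly the right maneuver, since complete regularity alone only guarantees the pointwise version.
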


We next state a result of compactness on the space of tight measures $\MXt$ that is essential for our main result. 
For a proof of this fact, see \cite[Theorem 9.1]{Topsoebook}.

\begin{theorem}\label{topsoe}
Let $X$ be a Hausdorff space and let $\{\mu_{\alpha}\}_{\alpha}$ be a net in
$\MXt$ such that $\limsup \mu_{\alpha} (X) < \infty$. If $\{\mu_{\alpha}\}_{\alpha}$ 
is uniformly tight, then it is compact with respect to the weak-star semi-continuity topology in $\MXt$.
\end{theorem}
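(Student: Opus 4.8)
The plan is to establish a Prokhorov-type compactness statement for nets by combining a Tychonoff-based extraction of a convergent subnet \emph{at the level of the evaluation functionals} with the uniform tightness hypothesis (to prevent escape of mass), and then to recognize the resulting limit as convergence in the weak-star semicontinuity topology via the portmanteau characterization of \cref{portmanteau}. The essential difference from the classical sequential Prokhorov theorem is that we work with nets on a general Hausdorff space, so sequential and diagonal arguments are unavailable and must be replaced by compactness of products.

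First, since $\limsup_\alpha \mu_\alpha(X) < \infty$, passing to a terminal subset I may assume $\mu_\alpha(X) \le M$ for all $\alpha$ and some finite $M$. For each $f \in \mC_b(X)$ the evaluations satisfy $|\mu_\alpha(f)| \le M\|f\|_\infty$, so the net $\{(\mu_\alpha(f))_{f}\}_\alpha$ lives in the product $\prod_{f \in \mC_b(X)} [-M\|f\|_\infty,\, M\|f\|_\infty]$, which is compact by Tychonoff's theorem. Extracting a convergent subnet (still indexed by $\alpha$), I obtain $L(f) = \lim_\alpha \mu_\alpha(f)$ for every $f \in \mC_b(X)$, where $L$ is a positive linear functional on $\mC_b(X)$; applying this to the constant function $1 \in \mC_b(X)$ already gives $\mu_\alpha(X) \to L(1) =: c$ for free.

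Next, I would promote $L$ to a genuine tight Borel measure $\mu \in \MXt$, and this is where uniform tightness enters decisively. Fix an increasing family of compact sets $K_n$ with $\mu_\alpha(X \setminus K_n) < 1/n$ for all $\alpha$, as provided by uniform tightness. On each compact Hausdorff space $K_n$ the restriction of $L$ induces, via the Riesz representation theorem, a Radon measure of uniformly bounded mass; these are compatible as $n$ grows, and I would define $\mu$ by inner regularity, patching the representations over the $K_n$. By construction $\mu$ is tight, i.e. $\mu \in \MXt$, and the tail bound $\mu_\alpha(X \setminus K_n) < 1/n$ transfers in the limit to $\mu(X \setminus K_n) \le 1/n$, forcing $\mu(X) = \lim_n \mu(K_n) = c$. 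This last point — that uniform tightness guarantees $\mu(X) = c$ rather than the strict loss $\mu(X) < c$ — is the crux, and is exactly what fails for merely bounded (non-uniformly-tight) nets; I expect the construction and tightness verification of $\mu$ from $L$ on a general Hausdorff $X$ to be the main obstacle, since without metrizability one cannot lean on sequential regularity and must carefully control the escaping mass while gluing the Riesz representations.

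Finally, to conclude convergence in the weak-star semicontinuity topology, I would verify the two conditions in item \ref{PMclosed} of \cref{portmanteau}: that $\mu_\alpha(X) \to \mu(X)$, which holds since both equal $c$, and that $\limsup_\alpha \mu_\alpha(F) \le \mu(F)$ for every closed set $F \subset X$. For the latter, the indicator $\chi_F$ is bounded upper semicontinuous; I would approximate $F$ from outside by open sets and by continuous functions on the relevant $K_n$, using the outer regularity of the tight measures $\mu_\alpha$ and $\mu$ together with \cref{contabove} to pass to the intersection, thereby transferring the functional convergence $\mu_\alpha(f) \to L(f) = \mu(f)$ to $\chi_F$. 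Once item \ref{PMclosed} is established, the equivalence \ref{PMclosed}$\Rightarrow$\ref{PMwscstar} in \cref{portmanteau} yields $\mu_\alpha \wconv \mu$ with $\mu \in \MXt$, which is precisely the asserted compactness of the net.
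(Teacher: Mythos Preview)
The paper does not supply its own proof of this theorem; it simply quotes the statement and refers the reader to \cite[Theorem~9.1]{Topsoebook}. So there is no in-paper argument to compare against.

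As for your outline itself: the overall strategy --- Tychonoff extraction of a limiting functional, then construction of a tight limit measure via the compact exhaustion $K_n$, then verification of portmanteau condition \ref{PMclosed} --- is the standard shape of a Prokhorov-type argument and is in the spirit of Topsoe's proof. You have also correctly flagged the genuine difficulty: on a bare Hausdorff space $X$ (no complete regularity, no normality assumed) the step ``$L$ restricted to $K_n$ induces a Radon measure via Riesz'' is not automatic, because $L$ lives on $\mC_b(X)$ and there is no Tietze extension $C(K_n)\to \mC_b(X)$ available to make sense of the restriction. The cleaner route, and the one closer to what Topsoe actually does, is to bypass $\mC_b(X)$ entirely: apply Tychonoff to the product $\prod_n \{\text{Radon measures on }K_n\text{ of mass}\le M\}$ (each factor compact by Riesz--Banach--Alaoglu on the compact Hausdorff $K_n$), extract a subnet along which $\mu_\alpha|_{K_n}\to\nu_n$ for every $n$ simultaneously, and then assemble $\mu$ from the $\nu_n$ by inner regularity. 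Compatibility of the $\nu_n$ and the closed-set inequality in step~4 then follow from Urysohn on each $K_n$, which is legitimate since compact Hausdorff spaces are normal. Your sketch as written would likely stall at the restriction step unless reorganized along these lines.
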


An important property of the space $\MXt$ is that it is Hausdorff
when endowed with the  weak-star semi-continuity (respectively, weak-star) topology whenever $X$ is a Hausdorff (respectively, completely regular Hausdorff)
space. This result was proved by Topsoe \cite{Topsoebook} but we also refer the reader to \cite[Section 2.4]{BMR2016} for a more
detailed proof.
As a consequence of this Hausdorff property and the definition of the weak-star topology, one immediately obtains the following characterization of the equality of two measures in $\MXt$ for any completely regular space $X$.

\begin{prop}\label{prop-measure}
Let $X$ be a completely regular Hausdorff space and
$\mu_1, \mu_2\in \MXt$. Then 
\begin{equation}\label{uniq-measu}
\mu_1=\mu_2 \,\,\mbox{ if and only if } \,\, \int_X\varphi(x)\rd
\mu_1(x)=\int_X\varphi(x)\rd \mu_2(x) \,\, \mbox{ for all } \varphi\in \mC_b(X). 
\end{equation}
\end{prop}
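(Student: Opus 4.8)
The plan is to dispatch the two implications separately. The forward implication is immediate: if $\mu_1 = \mu_2$ then trivially $\int_X \varphi \,\rd\mu_1 = \int_X \varphi \,\rd\mu_2$ for every $\varphi \in \mC_b(X)$, since the two integrals are computed against the same measure. All the content is in the converse.

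For the converse, I would exploit two facts recalled just above the statement. First, the weak-star topology on $\MX$ is, by definition, the initial (coarsest) topology rendering every evaluation map $\mu \mapsto \mu(\varphi)$, $\varphi \in \mC_b(X)$, continuous; consequently a net $\{\mu_\alpha\}_\alpha$ satisfies $\mu_\alpha \wsconv \mu$ if and only if $\mu_\alpha(\varphi) \to \mu(\varphi)$ in $\RR$ for every $\varphi \in \mC_b(X)$. Second, since $X$ is completely regular Hausdorff, the space $\MXt$ endowed with the weak-star topology is itself Hausdorff, so limits of nets in $\MXt$ are unique.

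With these in hand, the argument is a short constant-net trick. Assume $\mu_1(\varphi) = \mu_2(\varphi)$ for every $\varphi \in \mC_b(X)$, and consider the constant net $\mu_\alpha \equiv \mu_1$ over any directed index set. On the one hand $\mu_\alpha(\varphi) = \mu_1(\varphi) \to \mu_1(\varphi)$, so $\mu_\alpha \wsconv \mu_1$; on the other hand $\mu_\alpha(\varphi) = \mu_1(\varphi) = \mu_2(\varphi) \to \mu_2(\varphi)$ for every $\varphi \in \mC_b(X)$, so by the characterization of weak-star convergence $\mu_\alpha \wsconv \mu_2$ as well. Since both $\mu_1, \mu_2 \in \MXt$ and the weak-star topology on $\MXt$ is Hausdorff, the net has a unique limit, whence $\mu_1 = \mu_2$.

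I do not anticipate a genuine obstacle here; the result is essentially a restatement of the Hausdorff separation property in terms of the defining functionals. The only points requiring care are to invoke the Hausdorff property for the correct space, namely $\MXt$ rather than all of $\MX$, which is precisely why the hypothesis $\mu_1,\mu_2 \in \MXt$ is used, and to note that it is complete regularity of $X$ that secures this Hausdorffness in the weak-star topology. Alternatively, one could phrase the same idea without nets: if $\mu_1 \neq \mu_2$, Hausdorffness produces disjoint weak-star neighborhoods, but every basic weak-star neighborhood is cut out by finitely many functionals $\mu \mapsto \mu(\varphi)$, on which $\mu_1$ and $\mu_2$ agree by hypothesis, a contradiction.
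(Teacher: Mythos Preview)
Your proof is correct and follows exactly the approach the paper indicates: the paper does not give a formal proof but simply remarks that the proposition is an immediate consequence of the Hausdorff property of $\MXt$ under the weak-star topology (for $X$ completely regular Hausdorff) together with the definition of that topology. Your constant-net argument is a clean way to make this precise.
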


\section{Convergence of trajectory statistical solutions}\label{secAbstractRes}

This section contains our main results regarding convergence of trajectory statistical solutions in the sense defined in \cite{BMR2016}, which we recall in \cref{def-stat-sol} below. Throughout the section, we let $X$ be a Hausdorff space and denote as before by $\mathcal X=\mathcal C_{loc}(I,X)$ the space of continuous functions from $I$ into $X$ endowed with the topology of uniform convergence on compact sets. We also denote by $\mB_X$ and $\mB_\mX$ the Borel $\sigma$-algebras of $X$ and $\mX$, respectively.

As pointed out in  \cite{BMR2016}, we note that the terminology ``trajectory statistical solutions'' refers to the fact that these are measures on $\mX$ carried by a measurable subset of a fixed set $\mU \subset \mX$ which, in applications, would consist of the set of trajectories, i.e. solutions in a certain sense, of a given evolution equation. As such, these trajectory statistical solutions represent the probability distribution of all possible individual trajectories of the equation. At this abstract level, however, we do not specify the evolution equation, fixing only its corresponding set of solutions $\mU$.

 \begin{definition}\label{def-stat-sol}
Let $\mU$ be a subset of $\mX$. We say that a Borel probability measure $\rho$ on $\mX$ is a \textbf{$\mU$-trajectory statistical solution} if
\begin{enumerate}[label={(\roman*)}]
\item $\rho$ is tight;
\medskip
\item $\rho$ is carried by a Borel subset of $\mX$ included in $\mU$, i.e. there exists $\mV\in\mB_\mX$ such that $\mV\subset \mU$ and $\rho(\mX \setminus \mV)=0$.
\end{enumerate}
\end{definition}

From now on, we fix the following convention regarding notation. We use calligraphic capital letters to denote subsets of $\mX$ (e.g. $\mK$, $\mU$, $\mA$, etc.), and plain capital letters for subsets of $X$ (e.g. $K$, $A$, etc.). The letters $\mE$ or $\Gamma$ are used as index sets of nets, where the indices are usually represented by the letters $\alpha$, $\beta$, $\varepsilon$, or $\gamma$.

We prove below two theorems on the convergence of trajectory statistical solutions, as described in more details in the Introduction. One is for arbitrary trajectory statistical solutions, suitable to approximations which are not necessarily well-posed, and the other is for approximations which have a well-defined solution semigroup.

The first result stems from the compactness of sets of uniformly tight Borel probability measures. The main point is to localize the carrier of the limit trajectory statistical solution. The second one simplifies the necessary conditions in the case there is a well-defined solution semigroup.

\begin{theorem}\label{Main1}
    Let $X$ be a Hausdorff space, $I$ be an interval in $\mathbb R$, and $\{\rho_\ve\}_{\ve\in \mE}$ be a family of $\mU_\ve$-trajectory statistical solutions on subsets $\mU_\ve \subset \mX$, carried by Borel subsets $\mV_\ve \subset \mX$, where $\mX=\Cloc(I,X)$. Let $\mU \subset \mX$ and suppose there is a sequence $\{\mK_n\}_{n\in \mathbb{N}}$ of compact sets in $\mX$ such that, for all $n \in \NN$,
 \begin{enumerate}[label=(A\arabic*)]
 	\item\label{i:Main1} $\rho_\ve(\mX \setminus \mK_n) < \delta_n$, for all $\ve\in\mE$, with $\delta_n \rightarrow 0$; 
 	\item\label{ii:Main1} $\limsup_{\ve \in \mE}(\mV_{\ve}\cap \mK_n)\subset \mathcal U$ with respect to the topological superior limit in $\mX$. 
 \end{enumerate}
 Then, there exists a $\mU$-trajectory statistical solution $\rho$ which is a weak-star semicontinuity limit of a subnet of $\{\rho_\ve\}_{\ve\in \mE}$. Moreover, if the interval $I$ is closed and bounded on the left with left endpoint $t_0$ and $\Pi_{t_0} \rho_\ve \wconv \mu_0$ for some tight Borel probability measure $\mu_0$ on $X$, then $\Pi_{t_0} \rho = \mu_0$.
\end{theorem}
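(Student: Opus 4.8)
The plan is to first extract, via the compactness result \cref{topsoe}, a subnet of $\{\rho_\ve\}_{\ve\in\mE}$ converging in the weak-star semicontinuity topology, and then to localize the carrier of its limit inside $\mU$ by combining the lower bounds coming from \ref{i:Main1} with the characterization \eqref{def:limsup} of the topological superior limit and the continuity from above in \cref{contabove}. To begin, \ref{i:Main1} makes $\{\rho_\ve\}_{\ve\in\mE}$ uniformly tight: given $\eta>0$, picking $n$ with $\delta_n<\eta$ gives $\rho_\gamma(\mX\setminus\mK_n)<\eta$ for every $\gamma\in\mE$. Since each $\rho_\ve$ is a tight Borel probability measure, $\rho_\ve\in\MmXt$ and $\limsup_\ve\rho_\ve(\mX)=1<\infty$, so \cref{topsoe} yields a subnet $\{\rho_{\ve'}\}_{\ve'\in\mE'}$ with $\rho_{\ve'}\wconv\rho$ for some $\rho\in\MmXt$. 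By \cref{portmanteau}, item~\ref{PMclosed}, $\rho(\mX)=\lim_{\ve'}\rho_{\ve'}(\mX)=1$, so $\rho$ is already a tight Borel probability measure; what remains is condition (ii) of \cref{def-stat-sol}.

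The main obstacle is this carrier localization. For each fixed $n$ and each $\gamma\in\mE'$, I would set $\mathcal C_{n,\gamma}=\overline{\bigcup_{\gamma\preccurlyeq\beta}(\mV_\beta\cap\mK_n)}$, which is a closed, hence compact, subset of $\mK_n$. These sets decrease with $\gamma$, and by \eqref{def:limsup} their intersection equals $\limsup_{\ve'\in\mE'}(\mV_{\ve'}\cap\mK_n)$, which is contained in $\limsup_{\ve\in\mE}(\mV_\ve\cap\mK_n)\subset\mU$ by \ref{ii:Main1} (the superior limit of a subnet being contained in that of the full net). For every $\beta\succcurlyeq\gamma$, since $\rho_\beta$ is carried by $\mV_\beta$ and $\rho_\beta(\mK_n)>1-\delta_n$ by \ref{i:Main1}, one has $\rho_\beta(\mathcal C_{n,\gamma})\ge\rho_\beta(\mV_\beta\cap\mK_n)=\rho_\beta(\mK_n)>1-\delta_n$; because $\mathcal C_{n,\gamma}$ is closed, \cref{portmanteau}, item~\ref{PMclosed}, transfers this to the limit as $\rho(\mathcal C_{n,\gamma})\ge\limsup_{\ve'}\rho_{\ve'}(\mathcal C_{n,\gamma})\ge1-\delta_n$. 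Applying \cref{contabove} inside the compact Hausdorff space $\mK_n$, to the restriction of $\rho$ (outer regular, since $\rho$ is tight) and the decreasing net $\{\mathcal C_{n,\gamma}\}_{\gamma\in\mE'}$, gives $\rho(\mathcal C_n)\ge1-\delta_n$ for the compact set $\mathcal C_n:=\bigcap_\gamma\mathcal C_{n,\gamma}\subset\mU$.

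I would then take $\mV=\bigcup_{n\in\NN}\mathcal C_n$, a Borel subset of $\mU$ satisfying $\rho(\mV)\ge\sup_n(1-\delta_n)=1$, whence $\rho(\mX\setminus\mV)=0$ and $\rho$ is a $\mU$-trajectory statistical solution. For the claim on the initial datum, I would use that $\Pi_{t_0}$ is continuous, so that $f\circ\Pi_{t_0}$ is bounded and upper semicontinuous on $\mX$ for every bounded upper semicontinuous $f$ on $X$; combined with the change of variables \eqref{induced-measure} and \cref{portmanteau}, item~\ref{PMlimsup}, this shows that pushing forward preserves the convergence, namely $\Pi_{t_0}\rho_{\ve'}\wconv\Pi_{t_0}\rho$. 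On the other hand $\Pi_{t_0}\rho_{\ve'}\wconv\mu_0$ along the subnet by hypothesis. Since $\Pi_{t_0}\rho$ (the push-forward of a tight measure by a continuous map) and $\mu_0$ both belong to $\MXt$, which is Hausdorff in the weak-star semicontinuity topology because $X$ is Hausdorff, uniqueness of limits forces $\Pi_{t_0}\rho=\mu_0$.
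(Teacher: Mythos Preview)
Your proposal is correct and follows essentially the same approach as the paper's proof: extract a convergent subnet via \cref{topsoe}, localize the carrier using the closed tails $\overline{\bigcup_{\gamma\preccurlyeq\beta}(\mV_\beta\cap\mK_n)}$, pass to the intersection via \cref{contabove}, and handle the initial datum by pushing forward and invoking the Hausdorff property of $\MXt$. The only cosmetic differences are that you work consistently with the subnet $\mE'$ (the paper indexes the tails by the full net $\mE$, which is also fine), and for the push-forward convergence you use item~\ref{PMlimsup} of \cref{portmanteau} on upper semicontinuous test functions rather than item~\ref{PMlimopen} on open sets as the paper does.
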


\begin{proof}
From the assumption that $\rho_\ve(\mX \setminus \mK_n) < \delta_n$, for all $\ve\in\mE$, with compact sets $\mK_n \subset \mX$ and $\delta_n \rightarrow 0$, it follows immediately that $\{\rho_\ve\}_{\ve\in \mE}$ is a uniformly tight family of probability measures in $\mM(\mX,\rm{tight})$. Therefore, from the compactness result in \cref{topsoe}, there exists a subnet of $\{\rho_\ve\}_{\ve\in \mE}$ which converges in the weak-star semicontinuity topology to a probability measure $\rho \in \MmXt$.
To deduce that $\rho$ is a $\mU$-trajectory statistical solution, it remains to show that $\rho$ is carried by a Borel subset of $\mU$, which is the main component of this proof.
 
Using the carriers $\mV_\ve \subset \mU_\ve$ of each $\mU_\ve$-trajectory statistical solution, define the set
 \begin{equation}\label{Vcarrierofrho}
   \mV = \bigcup_{n=1}^\infty\limsup_{\varepsilon \in \mE} (\mathcal{V}_\varepsilon\cap \mK_n)
   =\bigcup_{n=1}^\infty\bigcap_{\ve\in \mE} \overline{\bigcup_{\ve \re \gamma}  \mV_{\gamma}\cap \mK_n},
 \end{equation}
 cf. \eqref{def:limsup}. Thanks to condition \ref{ii:Main1}, it follows that $\mV\subset \mU$. Moreover, since an arbitrary intersection of closed sets is closed and since each set $\overline{\bigcup_{\ve \re \gamma}  \mV_{\gamma}\cap \mK_n}$, $\ve \in \mE$, is a closed subset of the compact subset $\mK_n$, the sets $\bigcap_{\ve \in \mE}\overline{\bigcup_{\ve \re \gamma}  \mV_{\gamma}\cap \mK_n}$, $n\in \mathbb{N}$, are compact, and $\mV$ is a $\sigma$-compact subset of $\mX$, hence Borel.
 
 We now claim that $\rho$ is carried by $\mV$. Since $\mE$ is a directed set, then given any $\beta \in \mE$ there exists $\alpha \in \mE$ such that $\beta\re\alpha$ and $\ve\re \alpha$. 
 For any such $\alpha$, we have, by assumption,
\[
 	\rho_{\alpha}\left(\overline{\bigcup_{\ve \re \gamma}\mV_{\gamma}\cap \mK_n}\right)\geq
	 \rho_{\alpha}\left(\mV_{\alpha}\cap \mK_n\right)=\rho_\alpha(\mK_n)\geq 1-\delta_n,
\]
for any given $n\in\mathbb{N}.$ Taking the supremum in $\alpha$, we find that
\[
	\sup_{\beta \re \alpha} \rho_{\alpha}\left(\overline{\bigcup_{\ve \re
			\gamma}\mV_{\gamma}\cap \mK_n}\right)
	\geq 
	\sup_{\beta \re \alpha, \, \ve \re \alpha} \rho_{\alpha}\left(\overline{\bigcup_{\ve \re
			\gamma}\mV_{\gamma}\cap \mK_n}\right)\geq 1-\delta_n.
\]
Since $\beta \in\mE$ above is arbitrary, we take the infimum of this expression over $\beta \in \mE$ and find that
\[
\limsup_{\beta \in \mE}\rho_{\beta }\left(\overline{\bigcup_{\ve \re \gamma}\mV_{\gamma}\cap \mK_n}\right)\geq 1-\delta_n.
\]

In view of \cref{portmanteau}, statement \ref{PMclosed}, and the fact that $\rho_\beta \wconv \rho$ in $\mX$, we obtain that
 \begin{eqnarray}\label{eq:carrier}
 	\rho\left(\overline{\bigcup_{\ve \re\gamma}\mV_{\gamma}\cap \mK_n}\right)
 	\geq
 	\limsup_{\beta \in \mE}\rho_\beta \left(\overline{\bigcup_{\ve \re
 	\gamma}\mV_{\gamma}\cap \mK_n}\right) \geq 1 - \delta_n, \quad \mbox{for all } \ve \in \mE \mbox{ and } n\in\mathbb N. 
 \end{eqnarray} 
 
Clearly, if $\ve_1 \re \ve_2$ then $\overline{\bigcup_{\ve_2\re \gamma}  \mV_{\gamma}\cap \mK_n} \subset \overline{\bigcup_{\ve_1 \re \gamma}  \mV_{\gamma}\cap \mK_n}$. Thus, for each $n\in \mathbb{N}$, the net $\left\{\overline{\bigcup_{\ve \re \gamma}  \mV_{\gamma}\cap \mK_n}\right\}_{\ve \in \mE}$ is a monotone decreasing net of compact sets in the compact Hausdorff space $\mK_n$. Moreover, since $\rho$ is a tight Borel probability measure on a Hausdorff space then $\rho$ is outer regular, see \cref{subsec:meas:theory}. Hence, \cref{contabove} applies, and we deduce that
\begin{align*}
	\rho\left(\limsup_{\ve \in \mE}(\mV_{\ve}\cap \mK_n)\right)
	= \rho\left(\bigcap_{\ve \in \mE} \overline{\bigcup_{\ve \re \gamma}  \mV_{\gamma}\cap \mK_n}\right) 
	= \lim_{\ve \in \mE} \rho\left(\overline{\bigcup_{\ve \re \gamma}  \mV_{\gamma}\cap \mK_n}\right) 
	\geq 1 - \delta_n
\end{align*}
for all $n \in \NN$.
 
Hence, from \eqref{Vcarrierofrho},
 \[ \rho(\mV) = \rho\left(\bigcup_{n=1}^\infty\limsup_\varepsilon (\mathcal{V}_\varepsilon\cap \mK_n)\right) \geq \rho\left(\limsup_{\ve \in \mE}(\mV_{\ve}\cap \mK_n)\right)\geq 1-\delta_n, \quad \mbox{ for all } n\in\mathbb N.
 \]
 Since $\delta_n \rightarrow 0$, by taking $n \to \infty$ we find that $\rho(\mV) = 1$. This completes the proof that the Borel set $\mV\subset \mU$ carries $\rho$ and that $\rho$ is a $\mU$-trajectory statistical solution.

 For the second part of the statement, let us now suppose that $I$ is closed and bounded on the left, with left endpoint $t_0$, and that $\Pi_{t_0}\rho_\ve \wconv \mu_0$. Since $\Pi_{t_0}: \mX \to X$ is a continuous mapping, the set $K_n = \Pi_{t_0}\mK_n$ is compact in $X$ and $\Pi_{t_0}K_n = \Pi_{t_0}^{-1}\Pi_{t_0}\mK_n \supset \mK_n$, so that
 \[
    \Pi_{t_0}\rho_\ve(X \setminus K_n) = \rho_\ve(\Pi_{t_0}^{-1}(X\setminus K_n)) = \rho_\ve(\mX \setminus \Pi_{t_0}^{-1} K_n) \leq \rho_\ve(\mX \setminus \mK_n) < \delta_n,
 \]
 showing that $\Pi_{t_0}\rho_\ve$ is tight on $X$. Similarly, $\Pi_{t_0}\rho$ is also tight on $X$. Let us show that $\Pi_{t_0}\rho_\ve \wconv \Pi_{t_0}\rho$ on $X$ using condition \ref{PMlimopen} of \cref{portmanteau}.
 
 First, for the whole space $X$, since $\Pi_{t_0}^{-1}X = \mX$, we have
\[ \Pi_{t_0}\rho_\ve(X) = \rho_\ve(\Pi_{t_0}^{-1}X) = \rho_\ve(\mX) \rightarrow \rho(\mX) = \rho(\Pi_{t_0}^{-1}X) = \Pi_{t_0}\rho(X).
\]
Now, for an open set $G\subset X$, the set $\Pi_{t_0}^{-1}G$ is open in $\mX$, so that
\[ \liminf_\ve \Pi_{t_0}\rho_\ve(G) = \liminf_\ve \rho_\ve(\Pi_{t_0}^{-1}(G)) \geq \rho(\Pi_{t_0}^{-1}(G)) = \Pi_{t_0}\rho(G).
\]
Thus, from \cref{portmanteau}, we deduce that $\Pi_{t_0}\rho_\ve \wconv \Pi_{t_0}\rho$ on $X$.

On the other hand, by assumption, we have $\Pi_{t_0}\rho_\ve \wconv \mu_0$. Since  $\MXt$ is Hausdorff with respect to the weak-star semicontinuity topology (see \cref{subsec:top:meas}), the weak-star semicontinuity limit has to be unique. Thus, we deduce that $\Pi_{t_0} \rho = \mu_0$. This concludes the proof.
\end{proof}

\begin{remark}
    The hypothesis, in \cref{Main1}, of the existence of compact subsets $\mK_n\subset\mX$ with $\rho_\ve(\mX \setminus \mK_n) < \delta_n$ and $\delta_n \rightarrow 0$ is, in fact, the condition of uniform tightness of the family $\{\rho_\ve\}_{\ve\in\mE}$, and this by itself guarantees, from the compactness given in \cref{topsoe}, the existence of the weak limit $\rho$. The importance of making this condition explicit in the statement of the theorem is only to relate the sets $\mK_n$ to the set $\mU$, via the condition that $\limsup_{\ve \in \mE}(\, \mV_{\ve}\cap \mK_n)\subset \mathcal U$, with respect to the topological superior limit in $\mX$. With this extra condition, we prove that the limit measure $\rho$ is carried by a Borel set included in $\mU$, and hence $\rho$ is a $\mU$-trajectory statistical solution. The localization of the carrier of $\rho$ is in fact the main point of \cref{Main1}, since the existence of a limit measure $\rho$ follows directly from the underlying uniform tightness condition.
\end{remark}

\begin{remark}
    From the proof of \cref{Main1}, we have, more precisely, that $\rho$ is carried by the $\sigma$-compact Borel subset $\mV$ defined in \eqref{Vcarrierofrho}, which depends on the sets $\mV_\ve$, $\ve \in \mE$, and also the sequence of compact sets $\mK_n\subset \mX$, $n \in \NN$. In this regard, we note that a carrier set is in general not unique.
\end{remark}

We turn to our second main result, relating to applications where the net of approximating trajectory statistical solutions is induced by a well-defined solution operator $S_\ve$, $\ve \in \mE$.

\begin{theorem}\label{Main2alt}
    Let $X$ be a Hausdorff space and let $I$ be an interval in $\mathbb R$ closed and bounded on the left with left endpoint $t_0$. Let $\mU$ be a subset of $\mX=\mathcal \Cloc(I,X)$. Consider a net $\{S_\ve\}_{\ve\in \mE}$ of measurable functions $S_{\ve}: X \rightarrow \mX$ and a net $\{\mu_\ve\}_{\ve\in\mE}$ of tight Borel probability measures on $X$. Set $\mU_\ve = S_\ve(X)$ and define $P_\ve = \Pi_{t_0}S_\ve:X\rightarrow X$. Assume that
	\begin{enumerate}[label=(H\arabic*)]
		\item \label{Main2:H0} $P_\ve\mu_\ve \wconv \mu_0$, for some tight Borel probability measure $\mu_0$ on $X$.
	\end{enumerate}    
	Moreover,  suppose that there exists a sequence $\{K_n\}_{n\in\NN}$ of compact sets in $X$ with the following properties:
    \begin{enumerate}[label=(H\arabic*)] \setcounter{enumi}{1}
        \item \label{Main2:H1} The map $S_{\ve}|_{K_n}: K_n\rightarrow \mX$ is continuous for all $n\in\NN$ and all $\varepsilon\in \mathcal E$, with the topology inherited from $X$;
        \item \label{Main2:H2} $\mu_\ve(X\setminus K_n) \leq \delta_n$, for all $\ve\in\mE$ and all $n\in\NN$, where $\delta_n \rightarrow 0,$ when $n\rightarrow \infty$;
        \item \label{Main2:H3} For each $n$, there exists a compact set $\mK_n$ in $\mX$ with $S_\ve(K_n) \subset \mK_n$, for all $\ve\in\mE$.
        \item \label{Main2:H4} For each $n\in\NN$,
        \[\limsup_{\ve \in \mE}S_{\ve}(K_n) \subset \mathcal U.\]
    \end{enumerate}
    Then, each $\rho_\ve=S_\ve \mu_\ve$ is a $\mU_\ve$-trajectory statistical solution and the net $\{\rho_{\ve}\}_{\ve \in \mE}$ has a convergent subnet, with respect to the weak-star semicontinuity topology, to a $\mathcal U$-trajectory statistical solution $\rho$ such that $\Pi_{t_0}\rho=\mu_0$.
\end{theorem}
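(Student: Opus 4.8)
The plan is to mirror the architecture of the proof of \cref{Main1}, but to build the carriers of the approximating solutions directly from the images $S_\ve(K_n)$ rather than from intersections of the form $\mV_\ve\cap\mK_n$. This is exactly what allows us to use the weaker localization hypothesis \ref{Main2:H4}, and is the reason \cref{Main1} cannot be invoked verbatim: condition \ref{ii:Main1} there would require controlling $\limsup_{\ve\in\mE}(\mV_\ve\cap\mK_n)$ for a single fixed carrier $\mV_\ve$, whereas \ref{Main2:H4} only controls $\limsup_{\ve\in\mE}S_\ve(K_n)$ for matched indices $n$.

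First I would check that each $\rho_\ve=S_\ve\mu_\ve$ is a $\mU_\ve$-trajectory statistical solution and that the family is uniformly tight. By \ref{Main2:H1} the restriction $S_\ve|_{K_n}$ is continuous on the compact set $K_n$, so each image $S_\ve(K_n)$ is compact, hence closed and Borel, and contained in $\mU_\ve=S_\ve(X)$. Since $K_n\subset S_\ve^{-1}(S_\ve(K_n))$, the change-of-variables formula \eqref{induced-measure} together with \ref{Main2:H2} gives $\rho_\ve(S_\ve(K_n))\geq\mu_\ve(K_n)\geq 1-\delta_n$, so the $\sigma$-compact Borel set $\bigcup_n S_\ve(K_n)\subset\mU_\ve$ carries $\rho_\ve$; tightness of $\rho_\ve$ then follows from a routine inner-regularity argument, approximating a Borel set from within $K_n$ by the tightness of $\mu_\ve$ and pushing the resulting compactum forward by the continuous map $S_\ve|_{K_n}$. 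Combining $S_\ve(K_n)\subset\mK_n$ from \ref{Main2:H3} with the above yields $\rho_\ve(\mX\setminus\mK_n)\leq\mu_\ve(X\setminus K_n)\leq\delta_n$ for every $\ve\in\mE$, so $\{\rho_\ve\}_{\ve\in\mE}$ is uniformly tight. By the compactness result \cref{topsoe}, a subnet $\{\rho_\beta\}$ converges in the weak-star semicontinuity topology to some $\rho\in\MmXt$.

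The heart of the argument is to localize the carrier of $\rho$ inside $\mU$. I would set $\mW=\bigcup_n\limsup_{\ve\in\mE}S_\ve(K_n)$. Because $S_\gamma(K_n)\subset\mK_n$ for all $\gamma$ by \ref{Main2:H3}, each set $\overline{\bigcup_{\ve\re\gamma}S_\gamma(K_n)}$ is a closed subset of the compact $\mK_n$, so $\limsup_{\ve\in\mE}S_\ve(K_n)=\bigcap_{\ve\in\mE}\overline{\bigcup_{\ve\re\gamma}S_\gamma(K_n)}$ is compact, $\mW$ is $\sigma$-compact and Borel, and $\mW\subset\mU$ by \ref{Main2:H4}. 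To show $\rho(\mW)=1$ I would repeat the estimate from \cref{Main1}: given $\beta\in\mE$, directedness yields $\alpha$ with $\beta\re\alpha$ and $\ve\re\alpha$, and then $S_\alpha(K_n)\subset\overline{\bigcup_{\ve\re\gamma}S_\gamma(K_n)}$ gives $\rho_\alpha(\overline{\bigcup_{\ve\re\gamma}S_\gamma(K_n)})\geq\rho_\alpha(S_\alpha(K_n))\geq 1-\delta_n$. Taking suprema over $\alpha$ and then the infimum over $\beta$ produces $\limsup_{\beta\in\mE}\rho_\beta(\overline{\bigcup_{\ve\re\gamma}S_\gamma(K_n)})\geq 1-\delta_n$, and the closed-set condition \ref{PMclosed} of \cref{portmanteau} applied to $\rho_\beta\wconv\rho$ gives $\rho(\overline{\bigcup_{\ve\re\gamma}S_\gamma(K_n)})\geq 1-\delta_n$ for all $\ve$. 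Since these sets form a monotone decreasing net of compacta in $\mK_n$ and $\rho$ is outer regular, \cref{contabove} yields $\rho(\limsup_{\ve\in\mE}S_\ve(K_n))\geq 1-\delta_n$; letting $n\to\infty$ forces $\rho(\mW)=1$, so $\rho$ is a $\mU$-trajectory statistical solution.

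It remains to identify the initial measure. Since $\Pi_{t_0}\rho_\ve=\Pi_{t_0}S_\ve\mu_\ve=P_\ve\mu_\ve$, the continuity of $\Pi_{t_0}$ together with $\rho_\beta\wconv\rho$ and tightness gives $\Pi_{t_0}\rho_\beta\wconv\Pi_{t_0}\rho$, verified exactly as in the second half of \cref{Main1} through the whole-space and open-set conditions \ref{PMlimopen} of \cref{portmanteau}. On the other hand $\{\Pi_{t_0}\rho_\beta\}=\{P_\beta\mu_\beta\}$ is a subnet of $\{P_\ve\mu_\ve\}$, which converges to $\mu_0$ by \ref{Main2:H0}, so it also converges to $\mu_0$; uniqueness of limits in the Hausdorff space $\MXt$ then gives $\Pi_{t_0}\rho=\mu_0$. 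I expect the carrier-localization step to be the main obstacle: one must keep the index $n$ aligned throughout, exploiting that \ref{Main2:H4} controls only the matched limit superior $\limsup_{\ve\in\mE}S_\ve(K_n)$, with a secondary technical point being the tightness of the push-forwards $\rho_\ve$, where the continuity of $S_\ve$ is available only on the compacta $K_n$.
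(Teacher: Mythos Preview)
Your proposal is correct and follows essentially the same architecture as the paper's proof: the same $\sigma$-compact carrier $\bigcup_n S_\ve(K_n)$ for each $\rho_\ve$, the same uniform tightness via \ref{Main2:H3}, the same limit carrier $\bigcup_n\limsup_{\ve\in\mE}S_\ve(K_n)$ established through the sup--inf estimate, \cref{portmanteau}\ref{PMclosed}, and \cref{contabove}, and the same identification $\Pi_{t_0}\rho=\mu_0$. Your remark that \cref{Main1} cannot be applied directly because \ref{Main2:H4} only controls the matched $\limsup_{\ve}S_\ve(K_n)$ is exactly the point the paper makes in \cref{rmkthm2vsthm1}.
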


\begin{remark}
    Note that, given $\{\bu_{0, \ve}\}_{\ve \in \mE} \subset X$, the family $P_\ve \bu_{0,\ve} = \Pi_{t_0} S_\ve \bu_{0,\ve}$ represents a collection of approximating initial data associated with the solution operators $\{S_\ve\}_{\ve\in\mE}$ in a given application. Hence, condition \ref{Main2:H0} is a natural assumption that guarantees that the initial measures $\Pi_{t_0} S_\ve \mu_\ve$ for the approximations converge to the limit initial measure. This should be checked in each application. In some cases, one has simply $\Pi_{t_0} S_\ve = I$, i.e. the identity operator, as in the application to the inviscid limit of the 2D Navier-Stokes equations given in \cref{subsec:2D:NSE:Euler}. However, this is not the case in certain applications, for example, when the approximating systems evolve in some lower-dimensional approximation of $X$, as in the application to Galerkin approximations of the Navier-Stokes equations presented in \cref{subsec:3D:Galerkin:NSE}. Indeed, in the Galerkin case we have $\Pi_{t_0} S_\ve u= P_N u$, $u \in X$, with $P_N$ denoting the projection onto the space spanned by $N \in \NN$ basis vectors of the vector space $X$.
\end{remark}

\begin{remark}
    Condition \ref{Main2:H1} pertains to the regularity of the approximation semigroup and is used for measurability purposes, guaranteeing that $\rho_\ve$ is carried by a Borel subset of $\mU_\ve$ and, hence, is a statistical solution. Condition \ref{Main2:H2} guarantees that the initial measures $\mu_\ve$ are uniformly exhausted by the sets $K_n$, which is then used together with \ref{Main2:H3} to prove the uniform tightness of $\rho_\ve = S_\ve\mu_\ve$ with respect to the compact sets $\{\mK_n\}$. Condition \ref{Main2:H4} certifies that the family of operators are, in fact, an approximation of the limit problem, i.e. the approximations do converge to a solution of the limit problem, as expressed by the space $\mU$, so that the limit measure $\rho$ is a $\mU$-statistical solution.
\end{remark}

\begin{remark}
In the particular case where $P_\ve$ is the identity operator and $\mu_\ve=\mu_0$ for all $\ve\in\mE$, for some $\mu_0$ tight Borel probability measure, assumptions \ref{Main2:H0} and \ref{Main2:H2} of \cref{Main2alt} are immediately satisfied.  Several applications would fit into this setting. This is indeed the case in our application to the inviscid limit of the 2D Navier-Stokes equations in \cref{subsec:2D:NSE:Euler}.
\end{remark}

\begin{proof}[Proof of \cref{Main2alt}]
    From the definition of $\rho_\ve$ as $S_\ve\mu_\ve$, we have
    \[
        \rho_\ve(\mX \setminus S_\ve(K_n)) = \mu_\ve(S_\ve^{-1}(\mX \setminus S_\ve(K_n))) = \mu_\ve(X \setminus S_\ve^{-1}(S_\ve(K_n))).
    \]
    Since $S_\ve^{-1}(S_\ve(K_n)) \supset K_n$, it follows that $X \setminus S_\ve^{-1}(S_\ve(K_n)) \subset X \setminus K_n$, so that
    \[
        \rho_\ve(\mX \setminus S_\ve(K_n)) \leq \mu_\ve(X\setminus K_n).
    \]
    Using \ref{Main2:H2}, we obtain
    \begin{equation}
        \label{eqfrommain2h3}
        \rho_\ve(\mX \setminus S_\ve(K_n)) \leq \delta_n,
    \end{equation}
    for all $n\in \NN$ and arbitrary $\varepsilon$. Thus,
    \[
        \rho_\ve(\mX \setminus \cup_{m\in\NN} S_\ve(K_m)) \leq \rho_\ve(\mX \setminus S_\ve(K_n)) \leq \delta_n,
    \]
    for all $n$. Since $\delta_n\rightarrow 0$, this implies that
\[
\rho_\ve(\mX \setminus \cup_{n\in\NN} S_\ve(K_n)) = 0.
\]
    
    Therefore, $\rho_\ve$ is carried by the set
    \begin{equation}
        \label{defmVneforSve}
        \mV_\ve = \bigcup_{n\in\NN} S_\ve(K_n).
    \end{equation}
    From \ref{Main2:H1} we see that $S_\ve(K_n)$ is compact in $\mX$, so that $\mV_\ve$ is a $\sigma$-compact set in $\mX$. In particular, it is a Borel set in $\mX$.
    
    Let us now show that each $\rho_\ve$ is tight. Using \ref{Main2:H2}, we see that, for every Borel set $\mA \subset \mX$,
    \begin{align*}
        \rho_\ve (\mA) & = \mu_\ve(S_\ve^{-1}(\mA)) \\
            & = \mu_\ve(S_\ve^{-1}(\mA) \cap K_n) + \mu_\ve(S_\ve^{-1}(\mA) \setminus K_n) \\
            & \leq \mu_\ve(S_\ve^{-1}(\mA) \cap K_n) + \mu_\ve(X \setminus K_n) \\
            & \leq \mu_\ve(S_\ve^{-1}(\mA) \cap K_n) + \delta_n.
    \end{align*}
    On the other hand,
    \[
        \mu_\ve(S_\ve^{-1}(\mA) \cap K_n) \leq \mu_\ve(S_\ve^{-1}(\mA)) = \rho_\ve (\mA).
    \]
    Thus,
    \begin{equation}
        \label{rhonemAassupn}
        \rho_\ve (\mA) = \sup_n\{\mu_\ve (S_\ve^{-1}(\mA)\cap K_n)\}.
    \end{equation}
    Being tight, each $\mu_\ve$ is continuous from below with respect to compact sets. Thus,
    \[
        \mu_\ve (S_\ve^{-1}(\mA)\cap K_n) = \sup\{\mu_\ve(F): F \subset S_\ve^{-1}(\mA)\cap K_n, F \mbox{ compact in } X\}.
    \]
    Since $F \subset S_\ve^{-1}(S_\ve(F))$, we have the bound
    \[
        \mu_\ve (S_\ve^{-1}(\mA)\cap K_n) \leq \sup\{\mu_\ve(S_\ve^{-1}(S_\ve(F))): F \subset S_\ve^{-1}(\mA)\cap K_n, F \mbox{ compact in } X\}.
    \]
    From the continuity of the restriction $S_\ve |_{K_n}$ of $S_\ve$ to the compact set $K_n$, we have $S_\ve(F)$ compact, for every compact set $F\subset K_n$. Thus, we bound the right hand side extending $S_\ve(F)$ to any compact set $\mK'\subset \mA$, i.e.
    \[
        \mu_\ve (S_\ve^{-1}(\mA)\cap K_n) \leq \sup\{\mu_\ve (S_\ve^{-1}(\mK')): \mK' \subset \mA, \,\mK' \mbox{ compact in } \mX\}.
    \]
    Back to $\rho_\ve = S_\ve\mu_\ve$, this means
    \[
        \mu_\ve (S_\ve^{-1}(\mA)\cap K_n) \leq \sup\{\rho_\ve(\mK'): \mK' \subset \mA, \,\mK' \mbox{ compact in } \mX\}.
    \]
    Since the right hand side does not depend on $n$, this gives
    \[
        \sup_n\mu_\ve (S_\ve^{-1}(\mA)\cap K_n) \leq \sup\{\rho_\ve(\mK'): \mK' \subset \mA, \,\mK' \mbox{ compact in } \mX\}.
    \]
    Plugging this back into \eqref{rhonemAassupn} yields
    \[
        \rho_\ve (\mA) \leq \sup_n\{\mu_\ve (S_\ve^{-1}(\mA)\cap K_n)\} \leq \sup\{\rho_\ve(\mK'): \mK' \subset \mA, \,\mK' \mbox{ compact in } \mX\} \leq \rho_\ve(\mA).
    \]
    In other words,
    \[
        \rho_\ve (\mA) = \sup\{\rho_\ve(\mK'): \mK' \subset \mA, \,\mK' \mbox{ compact in } \mX\}.
    \]
    This shows that $\rho_\ve$ is tight. Thus, $\rho_\ve$ is a tight measure carried by the Borel subset $\mV_\ve$ of $\mU_\ve$, which means that $\rho_\ve$ is a $\mU_\ve$-statistical solution.

    Now, using \ref{Main2:H3} and the previous estimate \eqref{eqfrommain2h3}, we see that
    \[
        \rho_\ve(\mX \setminus \mK_n) \leq \rho_\ve(\mX \setminus S_\ve(K_n)) \leq \delta_n.
    \]
    This implies that the family $\{\rho_\ve\}_{\ve\in\mE}$ is uniformly tight.
    
    Since $\{\rho_\ve\}_{\ve\in\mE}$ is uniformly tight, it follows from \cref{topsoe} that there exists a subnet which converges in the weak-star semicontinuity topology to a tight probability measure $\rho \in \MmXt$.
    Now we show directly that $\rho$ is carried by
    \[
        \mV = \bigcup_{n=1}^\infty \limsup_{\ve \in \mE} S_\ve(K_n).
    \]
    Note that, by the definition \eqref{def:limsup} of $\limsup_\ve$ as an intersection of closed sets, we have that $\limsup_\ve S_\ve(K_n)$
    is a Borel set and so is $\mV$. Moreover, by assumption \ref{Main2:H4}, each $\limsup_\ve S_\ve(K_n)$ is a subset of $\mU$, hence $\mV \subset \mU$ as well. Let us now mimic the proof in \cref{Main1} and show that $\rho$ is carried by $\mV$.

    From the estimate \eqref{eqfrommain2h3}, we find that
    \[
        \rho_\ve(S_\ve(K_n)) \geq 1 - \delta_n.
    \]
    For every $\beta\in\mE$, there exists $\alpha\in\mE$ such that $\beta \preccurlyeq \alpha$ and $\ve \preccurlyeq \alpha$. Thus,
    \[
        \rho_\alpha\left( \overline{\bigcup_{\ve \preccurlyeq \gamma} S_\gamma(K_n)}\right) \geq \rho_\alpha(S_\alpha(K_n)) \geq 1 - \delta_n,
    \]
    for arbitrary $n\in\mathbb{N}.$ Taking the supremum over $\alpha\in \mE$, for $\beta \preccurlyeq \alpha$,
    \[
        \sup_{\alpha\in\mE, \;\beta \preccurlyeq \alpha} \rho_\alpha\left(\overline{\bigcup_{\ve \preccurlyeq \gamma} S_\gamma(K_n)}\right) \geq \sup_{\alpha\in\mE, \;\beta \preccurlyeq \alpha, \ve \preccurlyeq \alpha} \rho_\alpha\left(\overline{\bigcup_{\ve \preccurlyeq \gamma} S_\gamma(K_n)}\right) \geq 1 - \delta_n.
    \]
    Taking, now, the infimum over $\beta\in\mE$,
    \[
        \limsup_{\beta\in\mE} \rho_\beta\left(\overline{\bigcup_{\ve \preccurlyeq \gamma} S_\gamma(K_n)}\right) = \inf_{\beta\in\mE}\sup_{\alpha\in\mE, \;\beta \preccurlyeq \alpha}\rho_\alpha\left(\overline{\bigcup_{\ve \preccurlyeq \gamma} S_\gamma(K_n)}\right) \geq 1 - \delta_n.
    \]
    Since $\rho_\beta \wconv \rho$ along a subnet $\beta\in \mE'$, it follows from \cref{portmanteau}, \ref{PMclosed}, that
    \[
        \rho\left(\overline{\bigcup_{\ve \preccurlyeq \gamma} S_\gamma(K_n)}\right) \geq \limsup_{\beta\in\mE'} \rho_\beta\left(\overline{\bigcup_{\ve \preccurlyeq \gamma} S_\gamma(K_n)}\right) \geq 1 - \delta_n.
    \]
    Each $S_\gamma(K_n)$ is included in the compact set $\mK_n$, so that the set $\overline{\bigcup_{\ve \preccurlyeq \gamma} S_\gamma(K_n)}$ is a closed set in $\mK_n$, hence compact. Moreover, the family of sets $\overline{\bigcup_{\ve \preccurlyeq \gamma} S_\gamma(K_n)}$, $\ve\in\mE$, is decreasing in $\ve$. Hence, it follows from \cref{contabove} that
    \[
        \rho\left(\limsup_{\ve\in\mE} S_\ve(K_n)\right) = \rho\left( \bigcap_{\ve\in\mE}\overline{\bigcup_{\ve \preccurlyeq \gamma} S_\gamma(K_n)}\right) = \lim_{\ve\in\mE}\rho\left(\overline{\bigcup_{\ve \preccurlyeq \gamma} S_\gamma(K_n)}\right) \geq 1 - \delta_n.
    \]
    Therefore,
    \[
        \rho(\mV) = \rho\left( \bigcup_{m\in\NN} \limsup_{\ve\in\mE} S_\ve(K_m)\right) \geq 1 - \delta_n.
    \]
    Finally, since this holds for any $n\in\NN$ and $\delta_n\rightarrow 0$, we find that
    \[
        \rho(\mV) = 1.
    \]
    Thus, we find a tight Borel probability measure $\rho$ with $\rho_\ve \wconv \rho$ along a subnet $\ve\in\mE'$, with $\rho$ carried by the Borel subset $\mV$ of $\mU$, which means that $\rho$ is a $\mU$-trajectory statistical solution.

    It remains to show that $\Pi_{t_0} \rho = \mu_0$. We have just proved that $\rho_\ve \wconv \rho$ as measures on $\mX$. Since $\Pi_{t_0}$ is continuous from $\mX$ to $X$, this implies, as seen in the proof of \cref{Main1}, that $\Pi_{t_0}\rho_\ve \wconv \Pi_{t_0}\rho$, and $\Pi_{t_0} \rho_\ve, \Pi_{t_0} \rho \in \MXt$. 
    
    On the other hand, from hypothesis \ref{Main2:H0}, we obtain
    \[ \Pi_{t_0}\rho_\ve = \Pi_{t_0}S_\ve\mu_\ve = P_\ve\mu_\ve \wconv \mu_0.
    \]
    Combining the two limits together and invoking the uniqueness of the weak-star semicontinuity limit in $\MXt$, it follows that $\Pi_{t_0}\rho = \mu_0$, which completes the proof.
\end{proof}

\begin{remark}
    \label{rmkthm2vsthm1}
    Notice we do not apply \cref{Main1} to prove \cref{Main2alt}. If instead we assume $\limsup_{\ve \in \mE} (\mU_\ve \cap \mK_n) \subset \mU$ in place of \ref{Main2:H4}, then the hypothesis \ref{ii:Main1} of \cref{Main1} is satisfied. However, condition \ref{Main2:H4} is weaker and, in fact, more natural in this context. For this reason, we prove hypothesis \ref{i:Main1} of \cref{Main1} and then mimic the remaining part of the proof of \cref{Main1} to complete the proof \cref{Main2alt}.
\end{remark}

\section{Applications}\label{secapplications}

This section provides applications of our general framework for convergence of statistical solutions from \cref{Main1} and \cref{Main2alt} above. \cref{subsec:2D:NSE:Euler} and \cref{subsec:3D:NSE:Euler} concern the inviscid limit of the Navier-Stokes equations towards the Euler equations in two and three dimensions, respectively. And \cref{subsec:3D:Galerkin:NSE} deals with spectral Galerkin discretizations approximating the 3D Navier-Stokes equations. Before delving into these applications, we first recall in \cref{subsec:setting} some preliminary background regarding the Euler and Navier-Stokes equations.

\subsection{Mathematical setting for 2D and 3D incompressible flows}\label{subsec:setting}

We consider the $d$-dimensional incompressible Navier-Stokes equations (NSE) for either $d = 2$ or $3$, given by
\begin{align}\label{nse}
\partial_t \bu-\nu\Delta\bu+(\bu\cdot\nabla)\bu+\nabla p=\mathbf{f}, \quad \nabla\cdot\bu=0,
\end{align}
where $\bu=(u_1,\ldots, u_d)$ and $p$ are the unknowns and represent the velocity field and the pressure, respectively. Moreover, $\f=(f_1,\ldots, f_d)$ represents
a given body force applied to the fluid and $\nu>0$ is the kinematic viscosity. 
The functions $\bu$, $p$ and $\f$ depend on a spatial variable $x$ varying in $\Omega \subset \mathbb{R}^d$ and on a time variable $t$ varying in an interval $I \subset \mathbb{R}$. We will refer to  \eqref{nse} as `$\nu$-NSE' whenever there is a need to emphasize the dependence on $\nu$.

In the inviscid case, i.e. when $\nu = 0$, \eqref{nse} becomes the $d$-dimensional incompressible Euler equations
\begin{align}\label{ee}
	\partial_t \bu+(\bu\cdot\nabla)\bu+\nabla p=\mathbf{f}, \quad \nabla\cdot\bu=0.
\end{align}

We assume for simplicity that \eqref{nse} and \eqref{ee} are subject to periodic boundary conditions, with $\Omega = (0, L_1) \times \ldots \times (0, L_d) \subset \RR^d$ denoting a basic domain of periodicity. We say that a function $\bv: \mathbb{R}^d \to \mathbb{R}^d$ 
is $\Omega$-periodic if $\bv$ is periodic with period $L_i$ in each spatial direction $x_i$, $i=1, \ldots, d$.

Let us fix the functional setting associated to these equations. Denote by $\mC^{\infty}_{per}(\Omega)^d$ the space of infinitely differentiable and $\Omega$-periodic functions defined on $\RR^d$, and let $\Tsp \subset \mC^{\infty}_{per}(\Omega)^d$ be the set of divergence-free and periodic test functions with vanishing spatial average, namely
\begin{align}\label{def:Tsp}
	\Tsp=\left\{\bu\in \mC^{\infty}_{per}(\Omega)^d \,:\, \nabla \cdot \bu=0\mbox{ and }\int_\Omega \bu(\bx)\;\rd\bx=0 \right\}.
\end{align}

We denote by $H$, $V$ and $W_\sigma^{1,r}$, $1\leq r\leq \infty$, the closures of $\Tsp$ with respect to the norms in $L^2(\Omega)^d$, $H^1(\Omega)^d$ and $W^{1,r}(\Omega)^d$, respectively. Note that $V=W^{1,2}_\sigma$. The inner product and norm in $H$ are defined, respectively, by
\[
(\bu,\bv)=\int_{\Omega}\bu\cdot \bv \;\rd\bx \quad \mbox{ and } \quad |\bu| = \sqrt{(\bu,\bu)},
\]
where $\bu\cdot\bv = \sum_{i=1}^3 u_iv_i$. In the space $V$, these are defined as
\[
(\!(\bu, \bv)\!) = (\nabla \bu,\nabla \bv) = \int_\Omega \nabla \bu : \nabla \bv \;\rd \bx \quad \mbox{ and } \quad \|\bu\| = \sqrt{(\!(\bu,\bu)\!)},
\]
where it is understood that $\nabla \bu=(\partial_{x_j} u_i)_{i,j=1}^d$ and that $\nabla \bu : \nabla \bv$ is the componentwise product
between $\nabla \bu$ and $\nabla \bv$. In the space $\Wr$, except for $r=2$, we can only define a norm, given by
\[\|\bu\|_{W_\sigma^ {1,r}} = \left\{ \begin{array}{ll}
\left( \displaystyle \sum_{i,j=1}^d\| \partial_{x_j} u_i\|_{L^r}^r\right)^{\frac{1}{r}}, & 1\leq r <\infty\\
\\
\displaystyle \sum_{i,j=1}^d\|\partial_{x_j} u_i\|_{L^r}, & r=\infty . \end{array}\right. 
\]
The fact that $\| \cdot\|$ and $\|\cdot \|_{\Wr}$, $1 \leq r \leq \infty$, are indeed norms follows from the Poincar\'e inequality \eqref{ineq:Poincare} and the inequality \eqref{ineq:Wr:V} below.

We denote by $H'$, $V'$ and $\Wrp$ the dual spaces of $H$, $V$ and $W_\sigma^{1,r}$, respectively. The dual spaces are endowed with the classical dual norm of Banach spaces. Namely, for a given Banach space $E$, the standard norm in the dual space $E'$ is given by $\|\bu\|_{E'}=\sup_{\|\bv\|_E\leq 1}\langle \bu,\bv\rangle_{E',E}$, where $\langle \cdot, \cdot \rangle_{E',E}$ denotes the duality product between $E$ and $E'$. After identifying $H$ with its
dual $H'$, we obtain $V \subset H \subset V'$ and $W_\sigma^{1,r} \subset H \subset \Wrp$, with the injections being continuous, compact, and 
each space dense in the following one. Also, since $\Omega$ is bounded, we have that $W^{1,r}_\sigma\subset V$, with continuous injection, for all $r\geq 2$.

The negative Laplacian operator $(-\Delta)$ on $V \cap H^2(\Omega)^d$ is a positive and self-adjoint operator with compact inverse. As such, it admits a nondecreasing sequence of positive eigenvalues $\{\lambda_k\}_{k \in \NN}$ with $\lambda_k \to \infty$ as $k \to \infty$, which is associated to a sequence of eigenfunctions $\{\bw_k\}_{k \in \NN}$ that consists of an orthonormal basis of $H$. In relation to the first eigenvalue $\lambda_1$ of $(-\Delta)$, we have the Poincar\'e inequality,
\begin{align}\label{ineq:Poincare}
	| \bu | \leq \lambda_1^{-1/2} \|\bu\|, \quad \mbox{ for all } \bu \in V.
\end{align}
Using H\"older's inequality, we have
\begin{align}\label{ineq:Wr:V:volume}
	\| \bu \| \leq |\Omega|^{\left( \frac{1}{2} - \frac{1}{r} \right)} \| \bu \|_{\Wr}, \quad \mbox{ for all } \bu \in \Wr, \,\, 2 \leq r \leq \infty,
\end{align}
where $|\Omega| = L_1 \cdots L_d$ is the area or volume of the $d$-dimensional domain. For the sake of simplicity, and with the aim of using $\lambda_1$ for dimensional consistency, we write \eqref{ineq:Wr:V:volume} in terms of $\lambda_1,$ by introducing the non-dimensional constant $c = \max\{1, |\Omega|\lambda_1^{d/2}\}^{1/2},$ so that
\begin{align}\label{ineq:Wr:V}
	\| \bu \| \leq c \lambda_1^{-\frac{d}{2} \left( \frac{1}{2} - \frac{1}{r} \right)} \| \bu \|_{\Wr}, \quad \mbox{ for all } \bu \in \Wr, \,\, 2 \leq r \leq \infty.
\end{align}

For any normed space $E$, we denote by $B_{E}(R)$ the closed ball centered at $0$ and with radius $R>0$ in $E$. Moreover, we denote by $E_{\text{w}}$ and $B_{E}(R)_{\text{w}}$ the spaces $E$ and $B_{E}(R)$ endowed with the weak topology, respectively.

For $d=2$, we denote by $\nabla^\perp$ the operator defined as $(-\partial_{x_2},\partial_{x_1})$. The vorticity $\omega=\omega(x_1,x_2)$ associated with a velocity field $\bu(x_1,x_2)=(\bu_1(x_1,x_2),\bu_2(x_1,x_2))$ is given by
\[ \omega=\nabla^\perp \cdot \bu = -\partial_{x_2} \bu_1 + \partial_{x_1} \bu_2.\]

We recall that the $L^r$-norm of the vorticity controls the $\Wr$-norm of its associated velocity field. More precisely, let 
$\bu\in H$ be such that $\nabla^{\perp}\cdot \bu\in L^r(\Omega)$, for some $1<r<\infty$. Then $\bu\in \Wr$ and
\begin{equation}\label{divcurl}
\|\bu\|_{\Wr}\leq c\|\nabla^{\perp}\cdot\bu\|_{L^r},
\end{equation}
for some other positive constant $c$.

We don't need to track the different constants that appear in the estimates, so, in what follows, we denote as $c$ a dimensionless positive constant whose value may change from line to line. We also occasionally use the capital letter $C$ to denote a positive dimensional constant.

Before proceeding to \cref{subsubsec:2D} and \cref{subsubsec:3D} with the types of solutions of NSE and Euler that suit our purposes, let us briefly provide some context for the choice of such solutions and recall some of the currently available results on existence and uniqueness. We keep the discussion restricted to the case of periodic boundary conditions, although similar results are often valid with other types of boundary conditions. We refer to the references cited below for further details.

Regarding the NSE in two dimensions, it is well known that given any forcing term $\f \in L^2_{\textrm{loc}} ([t_0,\infty), V')$, for some $t_0 \geq 0$, and initial datum $\bu_0$ in $H,$ there exists a unique weak solution $\bu$ of \eqref{nse} on $[t_0,\infty)$ satisfying $\bu(t_0) = \bu_0$, see e.g. \cite{bookcf1988,Lady,temam84}. Here the exact meaning of ``weak solution'' is recalled in \cref{NSweak} below. Therefore, the initial-value problem for weak solutions of the 2D $\nu$-NSE is globally well-posed, and we may thus define a solution operator $S_\nu$ associating to each $\bu_0 \in H$ the corresponding unique solution $\bu$ of \eqref{nse} on $[t_0,\infty)$ satisfying $\bu(t_0) = \bu_0$.

In the three-dimensional case, it is known that for any given $\f \in L^2_{\textrm{loc}} ([t_0,\infty), V')$ and $\bu_0 \in H$ there exists a \emph{Leray-Hopf weak solution} of NSE (cf. \cref{3NSweak} below) on $[t_0,\infty)$ satisfying the initial condition $\bu(t_0) = \bu_0$. This solution is typically obtained as an appropriate limit of the unique solutions of a corresponding sequence of approximating Galerkin systems, see e.g. \cite{bookcf1988,Lady,temam84}. However, this Leray-Hopf weak solution is not currently known to be unique, and hence a corresponding solution operator cannot be defined as of yet. Regarding this uniqueness issue, it is worth pointing out the recent result in \cite{AlbrittonBrueColombo2022} where the authors show that for a suitably constructed non-smooth forcing function $\f$ there exist two distinct Leray-Hopf weak solutions of 3D NSE in $\RR^3 \times (0,T)$ with initial data $\bu_0 \equiv 0$, for some $T>0$. Additionally, non-uniqueness results for weak solutions of 3D NSE of non-Leray-Hopf type were proved in \cite{BuckmasterVicol2019,BuckmasterColomboVicol2020,CheskidovLuo2022} by using convex integration techniques.

For our applications, we focus on the notions of weak solutions of 2D NSE and Leray-Hopf weak solutions of 3D NSE as recalled in \cref{NSweak} and \cref{3NSweak} below, respectively. In view of the aforementioned results, the examples showing convergence of statistical solutions in the 2D inviscid limit (\cref{subsec:2D:NSE:Euler}) and for Galerkin approximations in 3D (\cref{subsec:3D:Galerkin:NSE}) follow as a consequence of \cref{Main2alt}. The 3D inviscid limit case (\cref{subsec:3D:NSE:Euler}), on the other hand, requires the setting of \cref{Main1}. 

Specifically, for the Galerkin application, we take each $S_\ve$ from \cref{Main2alt} to be the solution operator $S_N$ for the Galerkin system with $N \in \NN$ Galerkin modes (see \eqref{def:Sm:Gal}), and $\mU$ as the set $\mU^\nu_I$ of Leray-Hopf weak solutions of 3D $\nu$-NSE on a fixed time interval $I \subset \RR$. 

Regarding the inviscid limit examples, in the 2D case we take each $S_\ve$ from \cref{Main2alt} as the solution operator $S_\nu$ associated to the 2D Navier-Stokes equations with viscosity parameter $\nu > 0$ (see \eqref{def:Snu}). In the 3D inviscid limit case, we consider each set $\mU_\varepsilon$ from \cref{Main1} to be the family $\mU^\nu_I$ of Leray-Hopf weak solutions of 3D $\nu$-NSE on the time interval $I \subset \RR$, and $\rho_\varepsilon = \rho_\nu$ as a corresponding trajectory statistical solution in the sense of \cref{def-stat-sol}. We note that existence of such trajectory statistical solution $\rho_\nu$ in 3D satisfying a given initial condition $\Pi_{t_0} \rho_\nu = \mu_0$, for any Borel probability measure $\mu_0$ on $H$, follows from the work \cite{FRT2013}, but is also obtained in \cite[Theorem 4.2]{BMR2016} with a more streamlined proof. Additionally, as pointed out in \cref{rmk:exist:ss:3D:NSE} below, this existence result also follows via convergence of statistical solutions of corresponding Galerkin approximations, as a consequence of our application in \cref{subsec:3D:Galerkin:NSE}.

To complete the setup for the 2D and 3D inviscid limit applications as required from \cref{Main1} and \cref{Main2alt}, respectively, it remains to choose an appropriate set $\mU$ of solutions of the Euler equations \eqref{ee}. In view of assumption \ref{ii:Main1} from \cref{Main1} or \ref{Main2:H4} in \cref{Main2alt}, we must choose a set $\mU$ for which it holds that any vanishing viscosity convergent sequence of individual solutions $\bu_{\nu_j}$ in $\mU_{\nu_j}$, $j \in \NN$, lying in a certain compact set, has as its limit a solution in $\mU$. In the 2D periodic case, this inviscid limit result for individual solutions is known to hold with respect to the standard notions of weak solutions of NSE and Euler (cf. \cref{Eweak}), provided enough regularity is assumed on the initial data. More precisely, given any $\bu_0 \in H$ such that $\nabla^\perp \cdot \bu_0 \in L^r$, $1 < r \leq \infty$, a vanishing viscosity convergent sequence of weak solutions to the 2D NSE, each with initial datum $\bu_0$, has as its limit a weak solution of 2D Euler with the same initial datum. This is shown in \cite{Lions2013} (see also \cite{DiPernaMajda1987}) under the assumption of zero forcing term, but it is mentioned that more general forcing terms could also be considered. See \cref{thmNStoEuler} below for one such more general case.

Here we recall that in the case $r = \infty$, namely when $\nabla^\perp \cdot \bu_0 \in L^\infty$, there is \emph{at most one} weak solution $\bu$ to the 2D Euler equations in vorticity formulation on $[t_0,\infty)$ satisfying $\bu(t_0) = \bu_0$, as originally shown in \cite{Yudovich1963}. This implies that, given any Borel probability measure $\mu_0$ on $H$ that is carried by the set $\mathcal{O}_0 = \{\bu_0 \in H \,:\, \nabla^\perp \cdot \bu_0 \in L^\infty\}$, a trajectory statistical solution of the 2D Euler equations starting from this initial measure can be simply obtained as $S \mu_0$, where $S$ is an associated and well-defined solution operator on $\mathcal{O}_0$. Moreover, together with the inviscid limit result for individual solutions, one can easily establish the convergence $S_{\nu_j} \mu_0 \wconv S \mu_0$ for any sequence $\nu_j \to 0$, where as before $S_\nu$ denotes the solution operator associated to the 2D $\nu$-NSE. For this reason, in our results below in \cref{subsec:2D:NSE:Euler} we consider only $r < \infty$.

In the three-dimensional case, on the other hand, an analogous inviscid result for individual solutions as previously described is not currently available. 
Alternative, and weaker, definitions of solutions for the Euler equations were defined to circumvent the extra complications that arise in three dimensions, and consequently obtain existence of a certain type of global-in-time solution of Euler as a vanishing viscosity limit, under appropriate initial data. Two such weaker notions are the \emph{measure-valued solutions} proposed in \cite{DiPernaMajda1987} and the \emph{dissipative solutions} from \cite{Lions2013}. As mentioned in \cref{sec:Intro}, here we focus on the latter definition (cf. \cref{Edissipative} below), since it more directly fits our abstract framework from \cref{secAbstractRes}. 

Finally, it is worth mentioning that global existence of weak solutions to the Euler equations from any given initial datum in $H$ and for any dimension $d \geq 2$ was recently shown in \cite{Wiedemann2011}, but not as a vanishing viscosity limit. Specifically, \cite{Wiedemann2011} relies on the construction of ``wild'' solutions of the Euler equations developed in \cite{DLS2010} to show the existence of an infinite number of (wild) weak solutions of \eqref{ee} departing from any fixed initial datum in $H$, and under zero forcing term.

\subsubsection{2D incompressible flows}\label{subsubsec:2D}

Let $I\subset\mathbb{R}$ be an interval closed and bounded on the left with left endpoint $t_0$. We start by recalling the standard notions of weak solutions to the 2D Navier-Stokes and Euler equations on $I$. For the definitions below, we recall the space $\Tsp$ of test functions defined in \eqref{def:Tsp}.

\begin{definition}\label{NSweak}
	Let $\f\in L_{\rloc}^2(I, V')$. We say that $\bu$ is a \textbf{weak solution of the 2D Navier-Stokes equations, \eqref{nse}, on $I$} if
\begin{enumerate}[label={(\roman*)}]
 \item $\bu\in \mathcal C_{\rloc}(I,H)\cap L_{\rloc}^2(I, V)$;
 \item $\partial_t\bu \in   L_{\rloc}^{2}(I, V')$;
 \item \label{NSweakeq} For every $\bv \in \Tsp$, the equation
 \begin{align}\label{eq3b}
 	\frac{d}{dt} \int_\Omega \bu \cdot \bv \; \rd \bx + \nu \int_\Omega \nabla \bu : \nabla \bv \; \rd \bx - \int_\Omega \bu \otimes \bu : \nabla \bv \; \rd \bx = \langle \f, \bv \rangle_{V',V} 
 \end{align}
 is satisfied in the sense of distributions on $I$.
\end{enumerate}
\end{definition}

\begin{definition}\label{Eweak}
	Let $\f\in L_{\rloc}^2(I, V')$. We say that $\bu$ is a \textbf{weak solution of the 2D Euler equations, \eqref{ee}, on $I$} if
\begin{enumerate}[label={(\roman*)}]
 \item\label{Eweaki} $\bu\in L_{\rloc}^\infty(I,H) \cap \mC_{\rloc}(I,\Tsp')$;
 \item \label{Eweakii} For every $\bv \in \Tsp$, the equation
 \begin{align}\label{Eweakiiineq}
 	\frac{d}{dt} \int_\Omega \bu \cdot \bv \; \rd \bx  - \int_\Omega \bu \otimes \bu : \nabla \bv \; \rd \bx = \langle \f, \bv \rangle_{V',V} 
 \end{align}
 is satisfied in the sense of distributions on $I$.
\end{enumerate}
\end{definition}

As recalled in \cref{subsec:setting}, when given $\bu_0 \in H$ with $\nabla^\perp \cdot \bu_0  \in L^r(\Omega)$, for some $1 < r \leq \infty$, the existence of a weak solution $\bu$ to the 2D Euler equations on $I$ satisfying $\bu(t_0) = \bu_0$ in $\Tsp'$ can be shown via a vanishing viscosity limit. A proof is given in e.g. \cite[Theorem 4.1]{Lions2013} in the case of zero forcing term. However, it is not difficult to extend the proof to the case of nonzero forcing $\f$, by assuming that $\f$ satisfies, e.g., $\f \in L_{\rloc}^2(I, H)$ and
$\nabla^\perp \cdot \f \in L^r_{\rloc}(I,L^r(\Omega))$. Additionally, it follows from the proof that this weak solution $\bu$ also belongs to $\mC_{\rloc}(I,\Wrw)$. 

Moreover, under these same conditions on $\bu_0$ and $\f$, it is not difficult to verify that the corresponding unique weak solution $\bunu$ of the 2D Navier-Stokes equations on $I$ satisfying $\bunu(t_0) = \bu_0$ in $H$ also belongs to $\mC_{\rloc}(I,\Wrw)$. 

In our following results regarding the two-dimensional case, we shall maintain this assumption on $\f$, namely $\f \in L_{\rloc}^2(I, H)$ and $\nabla^\perp \cdot \f \in L^r_{\rloc}(I,L^r(\Omega))$. In this case, it thus follows that we may take the abstract space $X$ from \cref{secAbstractRes} as $\Wrw$, with the corresponding trajectories of Euler and Navier-Stokes lying in $\mC_{\rloc}(I,\Wrw)$.

In the proposition below, we collect some useful inequalities valid for weak solutions of the 2D NSE, \eqref{nse}. The proof follows with similar arguments from \cite[Section 4.1]{Lions2013} under the appropriate modifications to include the forcing term $\f$. We omit the details.

Here we point out that the upper bound in \eqref{estapriori2} below is uniformly bounded as $\nu \to 0$. Clearly, this uniformity is crucial for the sake of our inviscid limit result, specifically for satisfying condition \ref{Main2:H4} from \cref{Main2alt}. 

\begin{prop}\label{propestapriori}
Let $I\subset\mathbb{R}$ be an interval closed and bounded on the left with left endpoint $t_0$ and $\f\in L^2_{\rloc}(I,H)$ with $\nabla^\perp\cdot \f \in L^r_{\rloc}(I,L^r(\Omega))$, $2 \leq r < +\infty$. 	Then, for every weak solution $\bunu \in \mC_{\rloc}(I,\Wrw)$ of the 2D NSE \eqref{nse} on $I$ with forcing term $\f$  and for any $\nu_0 > 0$, the following inequalities hold for all $\nu > 0$ and $t \in I$:
  
\begin{align}\label{estapriori1}
 	|\bunu(t)|^2 \leq \left( |\bunu(t_0)|^2 + \frac{1}{\nu_0 \lambda_1} \|\f\|_{L^2(t_0,t;H)}^2  \right)e^{\nu_0 \lambda_1(t - t_0)},
 \end{align}

	\begin{align}\label{estapriori2}
	\| \nabla^\perp \cdot \bunu(t)\|_{L^r}^r \leq  \left( \|\nabla^\perp \cdot \bunu(t_0)\|_{L^r}^r + (\nu_0\lambda_1)^{1-r}\|\nabla^\perp \cdot \f\|_{L^r(t_0,t;L^r)}^r \right)e^{(r-1)\nu_0\lambda_1(t-t_0)},
	\end{align} 

 \begin{align}\label{estapriori3}
	&\|\partial_t \bunu\|_{L^2(t_0,t;V')} \leq c\lambda_1^{-1/2} \|\f\|_{L^2(t_0,t;H)}  \notag \\ 
	&\qquad + c\lambda_1^{-1/2+1/r} \Bigg[ \left( \nu +\left(|\bunu(t_0)|^2 + \frac{1}{\nu_0 \lambda_1} \|\f\|_{L^2(t_0,t;H)}^2  \right) e^{\nu_0 \lambda_1(t - t_0)} \right) \notag \\ 
 &\qquad\qquad\qquad\qquad \left( \|\nabla^\perp \cdot \bunu(t_0)\|_{L^r}^r + (\nu_0\lambda_1)^{1-r}\|\nabla^\perp \cdot \f\|_{L^r(t_0,t;L^r)}^r \right) \Bigg] e^{(r-1)\nu_0\lambda_1(t-t_0)} ,
	\end{align} 
	where $c>0$ is a universal constant.
\end{prop}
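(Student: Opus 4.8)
The plan is to derive all three bounds from the weak formulation \eqref{eq3b} by standard energy-type estimates, tuning each application of Young's inequality so that the resulting Gronwall exponent is exactly the one appearing in \eqref{estapriori1}--\eqref{estapriori3}, and then pulling the exponential factor out of the Duhamel term by the crude bound $e^{a(t-s)}\le e^{a(t-t_0)}$ for $s\ge t_0$. Throughout, the viscous contributions carry a favorable sign and are simply discarded, which is precisely what makes every bound uniform as $\nu\to 0$.

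For \eqref{estapriori1} I would test \eqref{eq3b} with $\bv=\bunu$ (rigorously, invoking the energy equality valid for $2$D weak solutions). The nonlinear term drops since $\int_\Omega\bunu\otimes\bunu:\nabla\bunu=\tfrac12\int_\Omega(\bunu\cdot\nabla)|\bunu|^2=0$ by incompressibility, giving $\tfrac12\tfrac{d}{dt}|\bunu|^2+\nu\|\bunu\|^2=(\f,\bunu)$. Bounding $(\f,\bunu)\le\tfrac{1}{2\nu_0\lambda_1}|\f|^2+\tfrac{\nu_0\lambda_1}{2}|\bunu|^2$ and discarding $\nu\|\bunu\|^2\ge0$ yields $\tfrac{d}{dt}|\bunu|^2\le\nu_0\lambda_1|\bunu|^2+\tfrac{1}{\nu_0\lambda_1}|\f|^2$, and Gronwall produces \eqref{estapriori1}.

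For \eqref{estapriori2} I would pass to the vorticity formulation, whose transport form is $\partial_t\omega+(\bunu\cdot\nabla)\omega-\nu\Delta\omega=\nabla^\perp\cdot\f$ with $\omega=\nabla^\perp\cdot\bunu$, and test it with $|\omega|^{r-2}\omega$ (justified by a mollification or Galerkin approximation). The transport term vanishes because $\tfrac1r\int_\Omega(\bunu\cdot\nabla)|\omega|^r=0$, the viscous term equals $\nu(r-1)\int_\Omega|\omega|^{r-2}|\nabla\omega|^2\ge0$ and is discarded, and H\"older bounds the forcing by $\|\nabla^\perp\cdot\f\|_{L^r}\|\omega\|_{L^r}^{r-1}$. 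Choosing the Young split so that the coefficient of $\|\omega\|_{L^r}^r$ is exactly $(r-1)\nu_0\lambda_1$ (equivalently, the forcing coefficient becomes $(\nu_0\lambda_1)^{1-r}$) gives $\tfrac{d}{dt}\|\omega\|_{L^r}^r\le(r-1)\nu_0\lambda_1\|\omega\|_{L^r}^r+(\nu_0\lambda_1)^{1-r}\|\nabla^\perp\cdot\f\|_{L^r}^r$, and Gronwall produces \eqref{estapriori2}.

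The last estimate \eqref{estapriori3} is where the work lies. Restricting \eqref{eq3b} to $\bv\in V$ removes the pressure and identifies $\langle\partial_t\bunu,\bv\rangle_{V',V}$ with $-\nu(\!(\bunu,\bv)\!)+\int_\Omega\bunu\otimes\bunu:\nabla\bv+(\f,\bv)$. The forcing piece is controlled in $V'$ by $\lambda_1^{-1/2}|\f|$ via Poincar\'e \eqref{ineq:Poincare}, producing the first term of \eqref{estapriori3} after taking the $L^2(t_0,t)$ norm. The main obstacle is the nonlinear term: I would estimate $|\int_\Omega\bunu\otimes\bunu:\nabla\bv|\le\|\bunu\|_{L^4}^2\|\bv\|$ and use the $2$D Ladyzhenskaya inequality $\|\bunu\|_{L^4}^2\le c|\bunu|\,\|\bunu\|$ together with \eqref{ineq:Wr:V} and \eqref{divcurl}, namely $\|\bunu\|\le c\lambda_1^{-1/2+1/r}\|\nabla^\perp\cdot\bunu\|_{L^r}$, which is exactly where the exponent $-1/2+1/r$ and the vorticity norm enter; the viscous term is handled by the same embedding, contributing the $\nu$ inside the factor $(\nu+\cdots)$. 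Inserting \eqref{estapriori1} and \eqref{estapriori2} to control $|\bunu(s)|$ and $\|\omega(s)\|_{L^r}$ pointwise in $s$, exploiting the monotonicity in $t$ of those right-hand sides, and integrating in time then yields a bound of the stated product form in \eqref{estapriori3}. The delicate part is the bookkeeping needed to combine the viscous and nonlinear contributions into the single factor $(\nu+\cdots)$ and to absorb all dimensional constants into $c$; the resulting bound is convenient and uniform in $\nu$ rather than sharp, which is all that is needed for the inviscid limit via condition \ref{Main2:H4} of \cref{Main2alt}.
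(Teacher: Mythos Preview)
Your approach is correct and is precisely the standard route the paper has in mind: the paper itself omits all details and simply refers to \cite[Section 4.1]{Lions2013} ``under the appropriate modifications to include the forcing term $\f$,'' which amounts exactly to the energy equality for \eqref{estapriori1}, the $L^r$ vorticity estimate via testing with $|\omega|^{r-2}\omega$ for \eqref{estapriori2}, and the duality bound on $\partial_t\bunu$ through Ladyzhenskaya and \eqref{ineq:Wr:V} for \eqref{estapriori3}, in each case discarding the favorable viscous term and choosing the Young split to produce the stated Gronwall rate.
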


We present below the inviscid limit result for individual solutions that will be needed to verify some of the conditions from \cref{Main2alt}. The proof follows from standard arguments as in e.g. \cite[Chapter 4]{Lions2013}, but we include the details here for completeness.

\begin{prop}\label{thmNStoEuler}
	Let $I\subset\mathbb{R}$ be an interval closed and bounded on the left with left endpoint $t_0$, and let $\f\in L^2_{loc}(I, H)$ with $\nabla^\perp \cdot \f \in L^r_{loc}(I , L^r (\Omega) )$, $r\geq 2$. Let $\{\bunu\}_{\nu>0} \subset \mC_{\rloc}(I,\Wrw)$ be a vanishing viscosity net of weak solutions of the 2D Navier-Stokes equations \eqref{nse} on $I$ with external force $\f$, in the sense of \cref{NSweak}. Then, for every convergent subnet $\{\bu^{\nu'}\}_{\nu'}$ with $\bu^{\nu'} \to \bu$ in $\mC_{\rloc}(I,\Wrw)$ as $\nu' \to 0$, we have that the limit $\bu$ is a weak solution of the 2D Euler equations on $I$ with external force $\f$, in the sense of \cref{Eweak}.
\end{prop}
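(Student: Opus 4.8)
The plan is to pass to the limit, as $\nu' \to 0$, in the weak formulation \eqref{eq3b} satisfied by each $\bunup$, and to recover the Euler weak formulation \eqref{Eweakiiineq} for the limit $\bu$. Fixing a test function $\bv \in \Tsp$ and a scalar $\psi \in \mC_c^\infty(\mathrm{int}\,I)$, I would write \eqref{eq3b} in its time-integrated (distributional) form, so that it becomes an identity among four scalar terms: the time-derivative term $-\int_I (\int_\Omega \bunup\cdot\bv\,\rd\bx)\,\psi'\,\rd t$, the viscous term $\nu'\int_I(\int_\Omega \nabla\bunup:\nabla\bv\,\rd\bx)\,\psi\,\rd t$, the nonlinear term $-\int_I(\int_\Omega \bunup\otimes\bunup:\nabla\bv\,\rd\bx)\,\psi\,\rd t$, and the forcing term. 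The goal is to show each converges to its Euler counterpart, the viscous term vanishing in the limit.

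The linear and viscous terms are handled directly from the hypotheses. Since convergence in $\Cloc(I,\Wrw)$ is exactly uniform convergence on compact subintervals with respect to the weak topology of $\Wr$, and since the fixed $\bv$ induces an element of $(\Wr)'$ through the pairing $u\mapsto \int_\Omega u\cdot\bv\,\rd\bx$ (bounded on $\Wr$ by Poincar\'e's inequality), the scalar function $t \mapsto \int_\Omega \bunup(t)\cdot\bv\,\rd\bx$ converges \emph{uniformly} on the compact support of $\psi$ to $t\mapsto \int_\Omega\bu(t)\cdot\bv\,\rd\bx$; hence the time-derivative term converges. For the viscous term I would integrate by parts, using periodicity and $\di\bv=0$, to rewrite it as $-\nu'\int_I(\int_\Omega\bunup\cdot\Delta\bv\,\rd\bx)\psi\,\rd t$; the uniform bound \eqref{estapriori1} controls $|\bunup(t)|$ uniformly in $\nu'$ and $t$ on the support of $\psi$, and the explicit factor $\nu'\to 0$ forces this term to zero.

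The main obstacle is the nonlinear term, which is quadratic and therefore not stable under the merely weak convergence provided by the hypothesis; its treatment requires upgrading to strong convergence in $L^2$. Here I would invoke \cref{propestapriori}: estimate \eqref{estapriori2}, combined with \eqref{divcurl} and \eqref{ineq:Wr:V}, bounds $\{\bunup\}$ uniformly in $L^\infty_{\rloc}(I,V)$, while \eqref{estapriori3} bounds $\{\partial_t\bunup\}$ uniformly in $L^2_{\rloc}(I,V')$. Since the embedding $V\hookrightarrow H$ is compact and $H\hookrightarrow V'$ is continuous, an Aubin--Lions--Simon compactness argument shows that, on each compact $J\subset I$, the \emph{family} $\{\bunu\}_{\nu>0}$ is relatively compact in $L^2(J,H)$. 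As relative compactness is a property of the set, the net $\{\bunup\}$ admits an $L^2(J,H)$-convergent subnet; its strong limit must agree with $\bu$, identified through the already-known pointwise-in-time weak limits $\bunup(t)\rightharpoonup\bu(t)$ in $H$, and since every subnet then has a further subnet converging strongly to the same $\bu$, the whole net satisfies $\bunup\to\bu$ strongly in $L^2_{\rloc}(I,H)$. The quadratic term then passes to the limit via the elementary bound $\|\bunup\otimes\bunup-\bu\otimes\bu\|_{L^1(J\times\Omega)}\le \|\bunup-\bu\|_{L^2(J,H)}(\|\bunup\|_{L^2(J,H)}+\|\bu\|_{L^2(J,H)})$ together with $\nabla\bv\,\psi\in L^\infty$.

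It remains to check that $\bu$ has the regularity required by \cref{Eweak}. The membership $\bu\in L^\infty_{\rloc}(I,H)$ follows from the uniform bound \eqref{estapriori1} and weak lower semicontinuity of the $H$-norm, while $\bu\in\Cloc(I,\Tsp')$ is immediate from the assumed convergence in $\Cloc(I,\Wrw)$ together with the continuous inclusion of $\Wrw$ into $\Tsp'$. Collecting the limits of the four terms yields \eqref{Eweakiiineq} in the sense of distributions on $I$ for every $\bv\in\Tsp$, so $\bu$ is a weak solution of the 2D Euler equations in the sense of \cref{Eweak}. I expect the compactness step for the nonlinear term---and in particular phrasing it so that it remains valid for nets rather than sequences---to be the only delicate point; the remaining passages are routine.
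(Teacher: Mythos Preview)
Your proposal is correct and follows essentially the same route as the paper's proof: both identify the nonlinear term as the only nontrivial passage, upgrade the weak convergence to strong convergence in $H$ via an Aubin--Lions argument based on the a~priori bounds of \cref{propestapriori}, and handle the linear and viscous terms directly from the $\Cloc(I,\Wrw)$ convergence after integrating the viscous term by parts. The only minor differences are that the paper uses the compact embedding $\Wr\hookrightarrow H$ (rather than your $V\hookrightarrow H$) and obtains convergence in $\mC(J,H)$ (rather than your $L^2(J,H)$); both choices are adequate, and your subnet argument for identifying the strong limit is in fact slightly cleaner than the paper's ``up to a subnet'' phrasing.
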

\begin{proof}
Suppose $\{\bu^{\nu'}\}_{\nu'}$ is a subnet converging to some $\bu$ in $\mathcal \Cloc(I,\Wrw)$ as $\nu' \to 0$. Let us show that $\bu$ is a weak solution of the 2D Euler equations on $I$.

Fix any compact subinterval $J \subset I$. Note that since, in particular, $\bu^{\nu'}(t_0) \to \bu(t_0)$ in $\Wrw$ as $\nu' \to 0$, then $\{\bu^{\nu'}(t_0)\}_{\nu'}$ is uniformly bounded in $\Wr$. Then, from the a priori bounds \eqref{estapriori2} and \eqref{estapriori3}, it follows that $\{\bu^{\nu'}\}_{\nu'}$ is uniformly bounded in $L^\infty(J,\Wr)$ and $\{\partial_t \bu^{\nu'}\}_{\nu'}$ is uniformly bounded in $L^2(J, V')$. Hence,  since $\Wr$ is compactly embedded in $H$ for $r>1$, we can apply Aubin-Lions Lemma (\cite[Theorem A.11]{FMRT2001}) to obtain that, up to a subnet, $\bu^{\nu'} \to \bu$ in $\mC(J,H) $ as $\nu' \to 0$. In particular, $\bu \in L^\infty(J,H)$ and, consequently, $\bu \in  L^\infty_\rloc(I,H)$. Moreover, since $\bu \in \mC_\rloc (I, \Wrw) \subset \mC_{\rloc}(I,\Tsp')$, we deduce that condition \ref{Eweaki} of \cref{Eweak} is satisfied.

To verify the remaining condition, \ref{Eweakii}, fix any test function $\varphi \in \mC^\infty_\rc(I)$ and $\bv \in \Tsp$. By assumption, we have that
\begin{multline}\label{NSweakF}
	- \int_I \int_\Omega (\bu^{\nu'} \cdot \bv) \varphi' \;\rd \bx \;\rd t + \nu' \int_I \int_\Omega ( \nabla \bu^{\nu'} : \nabla \bv) \varphi \;\rd \bx \; \rd t - \int_I \int_\Omega ( \bu^{\nu'} \otimes \bu^{\nu'} : \nabla \bv) \varphi \;\rd \bx \;\rd t \\
	= \int_\Omega (\bu^{\nu'}(t_0) \cdot \bv) \varphi(t_0) \; \rd \bx + \int_I \langle \f, \bv\rangle_{V',V} \varphi \;\rd t
\end{multline}
for every $\nu'$.

Since $\varphi$ has compact support in $I$, in view of the convergence $\bu^{\nu'} \to \bu$ in $\mathcal \Cloc(I,\Wrw)$, we immediately obtain
\begin{gather*}
	\int_I \int_\Omega (\bu^{\nu'} \cdot \bv) \varphi' \;\rd \bx \;\rd t \longrightarrow 	\int_I \int_\Omega (\bu \cdot \bv) \varphi' \;\rd \bx \;\rd t, \quad \mbox{ as } \nu' \to 0, \\
	\nu' \int_I \int_\Omega ( \nabla \bu^{\nu'} : \nabla \bv) \varphi \;\rd \bx \; \rd t = 
	- \nu' \int_I \int_\Omega ( \bu^{\nu'} : \Delta \bv) \varphi \;\rd \bx \; \rd t \longrightarrow 0, \quad \mbox{ as } \nu' \to 0, \\
	\int_\Omega (\bu^{\nu'}(t_0) \cdot \bv) \varphi(t_0) \; \rd t  \longrightarrow  \int_\Omega (\bu (t_0) \cdot \bv) \varphi(t_0) \; \rd t, \quad \mbox{ as } \nu' \to 0.
\end{gather*}

Regarding the nonlinear term in \eqref{NSweakF}, we proceed as follows. Let $J \subset I$ be a compact subinterval containing the support of $\varphi$. Note that
\begin{align*}
	&\left| \int_I \int_\Omega (\bu^{\nu'} \otimes \bu^{\nu'} : \nabla \bv) \varphi \; \rd \bx \; \rd t  -   \int_I \int_\Omega (\bu \otimes \bu : \nabla \bv) \varphi \; \rd \bx \; \rd t \right| \\
	&\qquad\qquad= \left| \int_J \int_\Omega \left[ \left( (\bu^{\nu'} - \bu) \otimes \bu^{\nu'} + \bu \otimes (\bu^{\nu'} - \bu) \right) : \nabla \bv \right] \varphi \; \rd \bx \; \rd t  \right| \\
	&\qquad\qquad\leq  \|\bu^{\nu'}-\bu\|_{L^\infty(J;L^2(\Omega))}(\|\bu^{\nu'}\|_{L^\infty(J;L^2(\Omega))}+
	\|\bu\|_{L^\infty(J;L^2(\Omega))}) \|\nabla \bv \|_{L^\infty(\Omega)} \|\varphi\|_{L^1(J)}.
\end{align*}
Since $\bu^{\nu'} \to \bu$ in $L^\infty(J,H)$ as $\nu' \to 0$, it follows that 
\begin{align*}
	\int_I \int_\Omega (\bu^{\nu'} \otimes \bu^{\nu'} : \nabla \bv) \varphi \; \rd \bx \; \rd t  \longrightarrow \int_I \int_\Omega (\bu \otimes \bu : \nabla \bv) \varphi \; \rd \bx \; \rd t, \quad \mbox{ as } \nu' \to 0.
\end{align*}
Therefore, passing to the limit as $\nu' \to 0$ in \eqref{NSweakF}, we deduce that $\bu$ satisfies item \ref{Eweakii} of \cref{Eweak}. This concludes the proof.
\end{proof}

\subsubsection{3D incompressible flows}\label{subsubsec:3D}

Let us again take $I \subset \RR$ to be an interval closed and bounded on the left with left endpoint $t_0$. We recall the following standard notion of weak solution to the 3D Navier-Stokes equations.

\begin{definition}\label{3NSweak}Let $\f\in L^2_{\rloc}(I,V')$. We say that $\bu$ is a \textbf{Leray-Hopf weak solution of the 3D Navier-Stokes equations \eqref{nse} on $I$} if
\begin{enumerate}[label={(\roman*)}]
 \item $\bu\in  \mC_{\rloc}(I,H_\rw) \cap L_{\rloc}^2(I, V) $; 

 \item $\partial_t\bu \in   L_{\rloc}^{4/3}(I, V')$;

 \item \label{3NSweakeq} For every function $\Phi\in \mC^\infty(I\times\mathbb{R}^3)^3$ that is $\Omega$-periodic, divergence-free and compactly supported on $I$, it holds 
\begin{align}\label{eq3b3} 
\int_I\int_\Omega \bu \cdot\partial_t\Phi \;\rd\bx \;\rd t - \nu \int_I \int_\Omega \nabla \bu : \nabla \Phi \;\rd \bx \;\rd t + 
\int_I\int_\Omega \bu\otimes \bu: \nabla \Phi \;\rd \bx \;\rd t \notag \\
=-\int_\Omega \bu(t_0)\cdot\Phi(t_0)\;\rd \bx -\int_I\int_\Omega \f\cdot\Phi\;\rd \bx \;\rd t.
\end{align}

 \item\label{energy:ineq:3D:nse} $\bu$ satisfies the following energy inequality for almost
all $t'\in I$ and for all $t\in I$ with $t>t'$:
\begin{equation}\label{energy-ineq}
\frac{1}{2}
|\bu(t)|^2+\nu\int_{t'}^{t}\|\bu(s)\|^2\;\rd s \leq \frac{1}{2}
|\bu(t')|^2+\int_{t'}^t\langle\f(s),\bu(s)\rangle_{V',V}\;\rd s.
\end{equation}

 \item\label{3nsedefweaksolv} If $I$ is closed and bounded on the left, with left endpoint
$t_0$, then $\bu$ is strongly continuous in $H$ at $t_0$ from the right,
i.e., $\bu(t)\rightarrow \bu(t_0)$ in $H$ as $t \rightarrow t_0^+$.
\end{enumerate}
\end{definition}

The set of allowed times $t'$ in \eqref{energy-ineq} are characterized as the points of strong continuity from the right of $\bu$ in $H$. In particular, condition \ref{3nsedefweaksolv} implies that $t'=t_0$ is allowed in that case.

We note that condition \ref{energy:ineq:3D:nse} can be interchanged with the following inequality in the sense of distributions on $I$:
\begin{align}\label{energy:ineq:2}
	\frac{1}{2} \frac{\rd}{\rd t} |\bu(t)|^2 + \nu \|\bu(t)\|^2 \leq \langle \f(t), \bu(t)\rangle_{V',V},
\end{align}
see e.g. \cite{FRT2013}.

Given any $\f \in L^2_{\rloc}(I,V')$ and initial datum $\bu_0 \in H$, it is well known that there exists at least one Leray-Hopf weak solution of the 3D Navier-Stokes equations, \eqref{nse}, defined on $I$ and satisfying $\bu(t_0) = \bu_0$. For a proof of this classical result, we refer to e.g. \cite{bookcf1988,Lady,Lions1969,temam84,temam1995}.

Regarding the 3D Euler equations, we consider the notion of dissipative solution introduced in \cite[Section 4.4]{Lions2013}, where the forcing term was taken to be zero for simplicity. With the appropriate modifications to include an external force, we obtain the following definition. 

\begin{definition}\label{Edissipative} Let $\f \in L^1_{\rloc}(I,H)$.  We say that $\bu$ is a \textbf{dissipative solution of the 3D Euler equations \eqref{ee} on $I$} if
\begin{enumerate}[label={(\roman*)}]
 \item\label{Edissi} $\bu\in  \mC_{\rloc}(I, H_\rw)$; 
 \item \label{Edissii} For every $\bv\in \mC_{\rloc}(I,H)$ such that $d(\bv)=\frac{1}{2}(\nabla \bv+(\nabla \bv)^T)\in L^1_{\rloc}(I,L^\infty)$ and $E(\bv)=-\partial_t\bv-\mathbb P[(\bv\cdot \nabla)\bv)]\in L^1_{\rloc}(I,H)$, where $\bP$ denotes the projection onto $\Omega$-periodic divergence-free vector fields with zero spatial average, it holds 
\begin{multline}\label{3dEweakiiineq}
\int_\Omega |\bu(t) -\bv(t)|^2\;\rd\bx \leq \exp\left(2\int_{t_0}^t\|d^-(\bv)\|_{L^\infty} \;\rd s\right)\int_\Omega |\bu(t_0)-\bv(t_0)|^2\;\rd \bx \\
+2\int_{t_0}^t\int_\Omega \exp\left(2\int_{s}^t\|d^-(\bv)\|_{L^\infty} \;\rd \tau\right)(E(\bv)+\f)\cdot(\bu-\bv)\;\rd \bx \;\rd s,
\end{multline}
for all $t \in I$, where $d^-(\bv)=(\inf\{\xi^T d(\bv)\xi: \xi \in \RR^2, |\xi|=1\})^-$ is the negative part of the smallest eigenvalue of $d(\bv)$.
\end{enumerate}
\end{definition}

As mentioned in \cref{subsec:setting}, the main motivation behind this definition comes from establishing a notion of solution to the 3D Euler equations that is obtained as an appropriate limit of a vanishing viscosity sequence of Leray-Hopf weak solutions of the 3D NSE. This is indeed how the existence of a dissipative solution is shown in  \cite[Proposition 4.2]{Lions2013}, under an initial condition $\bu(t_0) = \bu_0$, for any $\bu_0 \in H$, and in the absence of external forcing term. With a simple adaptation, one can show the same holds for any given forcing $\f \in L^1_{\rloc}(I,H)$.

Following analogous steps from this proof, we obtain the inviscid limit result in \cref{thm3dNStoEuler} below, which we later apply for verifying  condition \ref{ii:Main1} of \cref{Main1}. We present the details of its proof here for completeness. Before proceeding, we show in the following proposition a few useful a priori estimates regarding weak solutions of the 3D NSE. For this formulation and the subsequent results, we require the forcing term to be in $L^2_{\rloc}(I,H),$ so it fits both \cref{3NSweak,Edissipative}.

\begin{prop}\label{prop:apriori:est:3D}
	Let $I\subset\mathbb{R}$ be an interval closed and bounded on the left with left endpoint $t_0$ and $\f\in L^2_{\rloc}(I,H)$. Let $\nu_0 >0$. Then, for every Leray-Hopf weak solution $\bunu$ of the 3D NSE \eqref{nse} on $I$ with forcing term $\f$ and for all $\nu > 0$, the following inequalities hold:
	\begin{align}\label{apriori:est:1}
		|\bunu(t)|^2 + 2 \nu \int_{t_0}^t e^{\nu_0 \lambda_1(t - \tau)} \|\bunu(\tau)\|^2 \rd \tau \leq e^{\nu_0 \lambda_1(t - t_0)} \left[ |\bunu(t_0)|^2 + \frac{1}{\nu_0 \lambda_1} \|\f\|_{L^2(t_0,t;H)}^2  \right],
	\end{align}
	for all $t \in I$, and
	\begin{multline}\label{apriori:est:2}
	 	\| \bunu(t) - \bunu(s)\|_{(\Winf)'} \\
	 	\leq c |t-s|^{1/2} (\nu^{1/2} + \nu_0^{1/2}) \lambda_1^{-3/4}  e^{\frac{\nu_0 \lambda_1(t - t_0)}{2}}  \left[ |\bunu(t_0)|^2 + \frac{1}{\nu_0 \lambda_1}  \|\f\|_{L^2(t_0,t;H)}^2  \right]^{1/2} \\
	 	+ |t - s| e^{\nu_0 \lambda_1(t - t_0)}  \left[ |\bunu(t_0)|^2 + \frac{1}{\nu_0 \lambda_1}  \|\f\|_{L^2(t_0,t;H)}^2  \right] ,
	\end{multline}
	for all $s,t \in I$ with $s \leq t$, and for some positive constant $c$ which is independent of $\nu$.	
\end{prop}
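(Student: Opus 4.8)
The plan is to prove the two estimates separately: \eqref{apriori:est:1} is a weighted energy estimate obtained from the differential energy inequality via an integrating factor, while \eqref{apriori:est:2} follows by representing $\bunu(t)-\bunu(s)$ as the time integral of $\partial_t\bunu$, pairing against test functions $\bv\in\Tsp$ through the weak formulation, and bounding the resulting terms in the $\Winfp$ norm using \eqref{apriori:est:1}. Throughout I write $[\cdots]$ for the bracketed quantity $|\bunu(t_0)|^2+(\nu_0\lambda_1)^{-1}\|\f\|_{L^2(t_0,t;H)}^2$.

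For \eqref{apriori:est:1}, I would start from the distributional energy inequality \eqref{energy:ineq:2}, which for $\f\in L^2_{\rloc}(I,H)$ reads $\tfrac12\tfrac{\rd}{\rd t}|\bunu|^2+\nu\|\bunu\|^2\le(\f,\bunu)$. Applying Cauchy--Schwarz and Young's inequality in the form $(\f,\bunu)\le \tfrac{\nu_0\lambda_1}{2}|\bunu|^2+\tfrac{1}{2\nu_0\lambda_1}|\f|^2$ yields
\[
\frac{\rd}{\rd t}|\bunu|^2 - \nu_0\lambda_1|\bunu|^2 + 2\nu\|\bunu\|^2 \le \frac{1}{\nu_0\lambda_1}|\f|^2.
\]
Multiplying by the integrating factor $e^{-\nu_0\lambda_1(t-t_0)}$, integrating from $t_0$ to $t$, and multiplying back by $e^{\nu_0\lambda_1(t-t_0)}$ produces
\[
|\bunu(t)|^2 + 2\nu\int_{t_0}^t e^{\nu_0\lambda_1(t-\tau)}\|\bunu(\tau)\|^2\,\rd\tau \le e^{\nu_0\lambda_1(t-t_0)}|\bunu(t_0)|^2 + \frac{1}{\nu_0\lambda_1}\int_{t_0}^t e^{\nu_0\lambda_1(t-\tau)}|\f(\tau)|^2\,\rd\tau,
\]
and bounding $e^{\nu_0\lambda_1(t-\tau)}\le e^{\nu_0\lambda_1(t-t_0)}$ in the forcing integral gives exactly \eqref{apriori:est:1}.

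For \eqref{apriori:est:2}, since $\partial_t\bunu\in L^{4/3}_{\rloc}(I,V')$, the map $t\mapsto\bunu(t)$ is continuous into $V'$ with $\bunu(t)-\bunu(s)=\int_s^t\partial_t\bunu(\tau)\,\rd\tau$ in $V'$, so pairing with $\bv\in\Tsp$ and reading off the equation from the weak formulation \eqref{eq3b3} gives
\[
\langle\bunu(t)-\bunu(s),\bv\rangle = \int_s^t\Big(-\nu(\!(\bunu,\bv)\!) + \int_\Omega \bunu\otimes\bunu:\nabla\bv\,\rd\bx + (\f,\bv)\Big)\,\rd\tau.
\]
I would estimate the three terms so as to bound each by a constant times $\|\bv\|_{\Winf}$. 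Using $\|\bv\|\le c\lambda_1^{-3/4}\|\bv\|_{\Winf}$ (the $d=3$, $r=\infty$ case of \eqref{ineq:Wr:V}) together with Cauchy--Schwarz in time and the dissipation bound $2\nu\int_{t_0}^t\|\bunu\|^2\le e^{\nu_0\lambda_1(t-t_0)}[\cdots]$ extracted from \eqref{apriori:est:1}, the viscous term contributes the $\nu^{1/2}\lambda_1^{-3/4}|t-s|^{1/2}e^{\nu_0\lambda_1(t-t_0)/2}[\cdots]^{1/2}$ piece; estimating $|\int_\Omega\bunu\otimes\bunu:\nabla\bv|\le|\bunu|^2\|\bv\|_{\Winf}$ and inserting the pointwise bound $|\bunu(\tau)|^2\le e^{\nu_0\lambda_1(t-t_0)}[\cdots]$ from \eqref{apriori:est:1} gives the $|t-s|\,e^{\nu_0\lambda_1(t-t_0)}[\cdots]$ piece; and estimating $|(\f,\bv)|\le|\f|\,|\bv|$ with $|\bv|\le\lambda_1^{-1/2}\|\bv\|\le c\lambda_1^{-5/4}\|\bv\|_{\Winf}$, Cauchy--Schwarz in time, and $\|\f\|_{L^2(s,t;H)}\le(\nu_0\lambda_1)^{1/2}[\cdots]^{1/2}$, yields the $\nu_0^{1/2}\lambda_1^{-3/4}|t-s|^{1/2}[\cdots]^{1/2}$ piece (the exponent arising from $\lambda_1^{-5/4}\cdot\lambda_1^{1/2}=\lambda_1^{-3/4}$). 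Taking the supremum over $\bv\in\Tsp$ with $\|\bv\|_{\Winf}\le 1$, extending by density to $\Winf$ (which by definition is the $W^{1,\infty}$-closure of $\Tsp$), and collecting the three contributions gives \eqref{apriori:est:2}.

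The computations here are essentially routine a priori estimates, so the genuine care needed is twofold. First, the passage from the differential energy inequality to the weighted integral inequality must be justified rigorously: since for a Leray--Hopf solution $t\mapsto|\bunu(t)|^2$ need not be absolutely continuous, this step is best carried out starting from the integral form \eqref{energy-ineq} of the energy inequality and applying a Grönwall argument to the weighted quantity $e^{-\nu_0\lambda_1(t-t_0)}|\bunu(t)|^2$. Second, one must track the powers of $\lambda_1$ through the embedding constants so that the stated exponents (the $\lambda_1^{-3/4}$ in both terms, and the cancellation $\lambda_1^{-5/4+1/2}$ in the forcing contribution) come out exactly as written. I expect the first of these to be the only point requiring more than bookkeeping.
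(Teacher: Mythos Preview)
Your proposal is correct and follows essentially the same route as the paper. For \eqref{apriori:est:1} the paper justifies the integrating-factor step exactly as you anticipate, by testing the distributional inequality \eqref{energy:ineq:2} against a nonnegative $\psi\in\mC^1(I)$ and then specializing to $\psi(t)=e^{-\nu_0\lambda_1 t}$; for \eqref{apriori:est:2} the paper first bounds $\|\bunu(t)-\bunu(s)\|_{(\Winf)'}\le |t-s|^{1/2}\|\partial_t\bunu\|_{L^2(s,t;(\Winf)')}$ and then estimates the latter norm termwise, whereas you pair directly with $\bv$ and integrate in time, but the three contributions and the embedding constants $\lambda_1^{-3/4}$, $\lambda_1^{-5/4}$ are handled identically.
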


\begin{proof}
From \eqref{energy:ineq:2}, it follows that, for every non-negative test function $\varphi \in \mC^\infty_\rc(I)$,
\begin{align*}
	- \frac{1}{2} \int_{t_0}^t |\bunu(\tau)|^2 \varphi'(\tau) \rd \tau + \nu \int_{t_0}^t \| \bunu(\tau)\|^2 \varphi(\tau) \rd \tau \leq \int_{t_0}^t (\f(\tau), \bunu(\tau)) \varphi(\tau) \rd \tau,
\end{align*}	
for all $t \in I$. Then, by choosing an appropriate sequence of test functions on $I$ and invoking the Lebesgue differentiation theorem, together with the fact that $\bunu \in \mC_{\rloc}(I,H_\rw)$ and $\bunu$ is strongly continuous at $t_0$ from the right, we deduce that
\begin{align}
	&|\bunu(t)|^2 \psi(t) + 2 \nu \int_{t_0}^t \|\bunu(\tau)\|^2 \psi(\tau) \rd \tau  \notag \\
	&\qquad \leq |\bunu(t_0)|^2 \psi(t_0) + \int_{t_0}^t |\bunu(\tau)|^2 \psi'(\tau) \rd \tau + 2 \int_{t_0}^t (\f(\tau), \bunu(\tau)) \psi(\tau) \rd \tau,
	\label{energy:ineq:psi}
\end{align}
for all $t \in I$ and for every non-negative function $\psi \in \mC^1(I)$; see e.g. \cite[Chapter II, Appendix B.1]{FMRT2001} for a similar argument. 

In particular, choosing $\psi(t) = e^{-\nu_0 \lambda_1 t}$, and estimating the integrand in the last term of \eqref{energy:ineq:psi} as 
\begin{align*}
	(\f(\tau), \bunu(\tau)) \leq |\f(\tau)| |\bunu(\tau)| 
	\leq \frac{1}{2\nu_0 \lambda_1} |\f(\tau)|^2 + \frac{\nu_0 \lambda_1}{2} |\bunu(\tau)|^2, 
\end{align*}
it follows that 
\begin{align*}
	|\bunu(t)|^2 e^{-\nu_0 \lambda_1 t} + 2 \nu \int_{t_0}^t \|\bunu(\tau)\|^2 e^{-\nu_0 \lambda_1 \tau} \rd \tau 
	&\leq |\bunu(t_0)|^2 e^{-\nu_0 \lambda_1 t_0} + \frac{1}{\nu_0 \lambda_1} \int_{t_0}^t |\f(\tau)|^2 e^{-\nu_0 \lambda_1 \tau} \rd \tau \\
	&\leq |\bunu(t_0)|^2 e^{-\nu_0 \lambda_1 t_0} + \frac{e^{-\nu_0 \lambda_1 t_0}}{\nu_0 \lambda_1}  \|\f\|_{L^2(t_0,t;H)}^2,
\end{align*}
which immediately yields \eqref{apriori:est:1}.

Regarding \eqref{apriori:est:2}, let $s, t \in I$ with $s \leq t$. Since $\partial_t \bunu \in L^{4/3}_{\rloc}(I,V') \subset L^1_{\rloc}(I, (\Winf)'),$ then, for all $\bv \in \Winf,$
\begin{multline*}
	|\langle \bunu (t) - \bunu (s), \bv \rangle_{(W_{\sigma}^{1,\infty})',W_{\sigma}^{1,\infty}}| 
	= \left| {\left\langle \int_s^{t} \partial_t \bunu (\tau) \rd \tau , \bv \right\rangle}_{(W_{\sigma}^{1,\infty})',W_{\sigma}^{1,\infty}} \right| \\
	\leq \left( \int_s^{t} \|\partial_t \bunu (\tau)\|_{(W_{\sigma}^{1,\infty})'} \rd \tau \right) \|\bv\|_{W_{\sigma}^{1,\infty}}
	\leq |t-s|^{1/2} \| \partial_t \bunu\|_{L^2(s,t;(W_{\sigma}^{1,\infty})')} \|\bv\|_{W_{\sigma}^{1,\infty}}.
\end{multline*}
Hence,
\begin{align*}
	\|  \bunu (t) - \bunu (s) \|_{(W_{\sigma}^{1,\infty})'}
	\leq |t-s|^{1/2} \| \partial_t \bunu\|_{L^2(s,t;(W_{\sigma}^{1,\infty})')}.
\end{align*}

We proceed to obtain an estimate of $\| \partial_t \bunu\|_{L^2(s,t;(W_{\sigma}^{1,\infty})')}$. From \eqref{eq3b3}, it follows that, for all $\bv \in V,$
\begin{align}\label{eq:dist}
	\frac{\rd}{\rd t} (\bunu, \bv) + \nu (\nabla \bunu, \nabla \bv) - (\bunu \otimes \bunu, \nabla \bv) = (\f, \bv)
\end{align}
in the sense of distributions on $I$. In particular, again since $\partial_t \bunu \in L^1_{\rloc}(I, (\Winf)')$ then for all $\bv \in \Winf$
\begin{align*}
	{\left\langle \partial_t \bunu, \bv \right\rangle}_{(\Winf)', \Winf}
	= \frac{\rd}{\rd t} \langle \bunu, \bv \rangle_{(\Winf)', \Winf} 
	= \frac{\rd}{\rd t} (\bunu, \bv).
\end{align*}
From \eqref{eq:dist}, along with Cauchy-Schwarz, H\"{o}lder's inequality, Poincar\'e inequality \eqref{ineq:Poincare}, and the inequality \eqref{ineq:Wr:V} for $r=\infty,$ we thus obtain 
\begin{align*}
	{\left\langle \partial_t \bunu (\tau), \bv \right\rangle}_{(\Winf)', \Winf}
	&\leq \nu \|\bunu(\tau)\| \| \bv\| + |\bunu(\tau)|^2 \|\bv\|_{\Winf} + |\f(\tau)| |\bv| \\
	&\leq c\nu \lambda_1^{-3/4} \|\bunu(\tau)\| \|\bv\|_{\Winf} + |\bunu(\tau)|^2 \|\bv\|_{\Winf} + c \lambda_1^{-5/4} |\f(\tau)| \|\bv\|_{\Winf},
\end{align*}
for a.e. $\tau \in I$.
Consequently,
\begin{align*}
	\|\partial_t \bunu (\tau)\|_{(\Winf)'}
	\leq c \lambda_1^{-3/4} ( \nu \|\bunu (\tau)\| + \lambda_1^{-1/2} |\f(\tau)| ) + |\bunu(\tau)|^2,  
	 \quad \mbox{for a.e. } \tau \in I.
\end{align*}

Hence, 
\begin{multline}\label{int:Dtu:Winf}
	\| \partial_t \bunu \|_{L^2(s,t;(\Winf)')} 
	\leq c \lambda_1^{-3/4} \left( \nu \|\bunu\|_{L^2(s,t;V)} + \lambda_1^{-1/2}  \|\f\|_{L^2(s,t;H)} \right) + \|\bunu\|^2_{L^{4}(s,t;H)} \\
	\leq  c \lambda_1^{-3/4}  \left( \nu \|\bunu\|_{L^2(t_0,t;V)} + \lambda_1^{-1/2}  \|\f\|_{L^2(t_0,t;H)} \right) +  |t-s|^{1/2} \|\bunu\|_{L^\infty(t_0,t;H)}^{2}.
\end{multline}

Thus, \eqref{apriori:est:2} follows by invoking \eqref{apriori:est:1} to further estimate the right-hand side of \eqref{int:Dtu:Winf}.
\end{proof}

\begin{prop}\label{thm3dNStoEuler}
Let $I\subset\mathbb{R}$ be an interval closed and bounded on the left with left endpoint $t_0$, and let $\f \in L^2_{\rloc}(I,H)$. Consider also a (strongly) compact set $K$ in $H$, and let $\{\bunu\}_{\nu>0}$ be a vanishing viscosity net of Leray-Hopf weak solutions to the 3D Navier-Stokes equations \eqref{nse} on $I$ with external force $\f$ and with initial data $\bunu(t_0)\in K$, in the sense of \cref{3NSweak}. Then, for every convergent
subnet $\{\bu^{\nu'}\}_{\nu'}$ with $\bu^{\nu'} \to \bu$ in $\mC_{\rloc}(I,H_{\rw})$ as $\nu' \to 0$, we have that the limit $\bu$ is a dissipative solution of the 3D Euler equations on $I$ with external force $\f$, in the sense of \cref{Edissipative}.
\end{prop}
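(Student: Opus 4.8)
\emph{Plan.} The plan is to reproduce, at the viscous level, the relative-energy (``dissipation'') inequality that defines a dissipative solution, and then to pass to the inviscid limit along the given subnet. Condition \ref{Edissi} is immediate, since by hypothesis $\bu$ is the limit of $\bunup$ in $\mC_{\rloc}(I,H_\rw)$ and therefore belongs to $\mC_{\rloc}(I,H_\rw)$. The work is thus entirely in establishing the inequality \eqref{3dEweakiiineq}. I would fix a test field $\bv$ as in \ref{Edissii}, abbreviate $w^\nu=\bunu-\bv$, and first derive the corresponding inequality for each $\bunu$.

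\emph{Viscous inequality.} For this I would combine three ingredients: the energy inequality \eqref{energy-ineq} (equivalently \eqref{energy:ineq:2}) for $\bunu$, which controls $\tfrac12\tfrac{\rd}{\rd t}|\bunu|^2$; the weak formulation \eqref{eq:dist} tested against the time-dependent field $\bv$, which gives an exact identity for $\tfrac{\rd}{\rd t}(\bunu,\bv)$; and the identity $\tfrac{\rd}{\rd t}\tfrac12|\bv|^2=(\bv,\partial_t\bv)=-(\bv,E(\bv))$, which uses $(\bv,\bP[(\bv\cdot\nabla)\bv])=0$ for the divergence-free field $\bv$. Expanding $\tfrac12|w^\nu|^2=\tfrac12|\bunu|^2-(\bunu,\bv)+\tfrac12|\bv|^2$ and collecting terms, the convective contributions assemble, after using $\nabla\cdot\bunu=\nabla\cdot\bv=0$, into $-((w^\nu\cdot\nabla)\bv,w^\nu)=-\int_\Omega (w^\nu)^T d(\bv)\,w^\nu\,\rd\bx\leq \|d^-(\bv)\|_{L^\infty}|w^\nu|^2$; the forcing and $E(\bv)$ terms combine into $(E(\bv)+\f,w^\nu)$; and the only $\nu$-dependent contribution is the viscous one $-\nu\|\bunu\|^2+\nu(\nabla\bunu,\nabla\bv)$. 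Applying Gr\"onwall's inequality with integrating factor $\exp(-2\int_{t_0}^t\|d^-(\bv)\|_{L^\infty}\,\rd s)$ then yields exactly the structure of \eqref{3dEweakiiineq} for $\bunu$, up to a viscous remainder controlled by $-\nu\|\bunu\|^2+\nu(\nabla\bunu,\nabla\bv)\leq \tfrac{\nu}{2}\|\bv\|^2$, which tends to $0$ with $\nu$ for sufficiently regular $\bv$, using the uniform-in-$\nu$ bound on $\|\bunu\|_{L^2(t_0,t;V)}$ furnished by \eqref{apriori:est:1}.

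\emph{Passage to the limit $\nu'\to 0$.} I would treat each term separately. On the left-hand side, $\bunup(t)\rightharpoonup\bu(t)$ weakly in $H$ for each $t$, so weak lower semicontinuity of the norm gives $|\bu(t)-\bv(t)|^2\leq \liminf_{\nu'}|\bunup(t)-\bv(t)|^2$, which is the correct direction. The initial term requires \emph{strong} convergence at $t_0$: since $\bunup(t_0)\in K$ with $K$ strongly compact in $H$ and $\bunup(t_0)\rightharpoonup\bu(t_0)$, the net $\bunup(t_0)$ converges strongly to its weak limit $\bu(t_0)$, whence $|\bunup(t_0)-\bv(t_0)|^2\to|\bu(t_0)-\bv(t_0)|^2$. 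For the time integral, the weight $\exp(2\int_s^t\|d^-(\bv)\|_{L^\infty}\,\rd\tau)$ is independent of $\nu'$, the integrand converges pointwise in $s$ because $E(\bv)(s)+\f(s)\in H$ is fixed while $\bunup(s)\rightharpoonup\bu(s)$, and it is dominated by a multiple of $|E(\bv)+\f|\in L^1_{\rloc}(I)$ thanks to the uniform energy bound on $|\bunup|$; dominated convergence then applies. Finally the viscous remainder vanishes, yielding \eqref{3dEweakiiineq} for $\bu$.

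\emph{Main obstacle.} The difficulty is twofold. First, the whole argument hinges on upgrading the weak convergence at the initial time to strong convergence: this is precisely what the compactness hypothesis on $K$ supplies, and it is essential in order to obtain the favorable direction on the initial term (a merely weak limit would give a lower-semicontinuity bound in the wrong direction). Second, the viscous cross term $\nu(\nabla\bunu,\nabla\bv)$ and the square-completion bound $\tfrac{\nu}{2}\|\bv\|^2$ require enough regularity on $\bv$ (essentially $\bv\in L^2_{\rloc}(I,V)$) for the remainder to vanish, whereas an admissible test field in \ref{Edissii} need only satisfy $d(\bv)\in L^1_{\rloc}(I,L^\infty)$. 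I would therefore first establish \eqref{3dEweakiiineq} for the limit $\bu$ against smooth divergence-free fields $\bv$, and then recover it for an arbitrary admissible $\bv$ by an approximation argument, mollifying $\bv$ so that $d^-(\bv)$ converges in $L^1_{\rloc}(I,L^\infty)$, $E(\bv)$ in $L^1_{\rloc}(I,H)$, and $\bv$ in $\mC_{\rloc}(I,H)$, which suffices for every term of the limiting inequality to pass to the limit.
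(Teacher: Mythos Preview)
Your proposal is correct and follows essentially the same route as the paper's proof: reduce to smooth test fields $\bv$ by density, combine the energy inequality for $\bunup$ with the weak formulation tested against $\bv$ and the identity for $\tfrac{\rd}{\rd t}|\bv|^2$ to obtain the relative-energy differential inequality, apply Gr\"onwall, and pass to the limit using weak lower semicontinuity on the left, strong compactness of $K$ for the initial term, and dominated convergence for the time integral. The only cosmetic difference is that the paper drops the favorable $-\nu\|\bunup\|^2$ from the energy inequality at the outset and then bounds the remaining cross term $2\nu'\int\exp(\cdots)\|\bv\|\,\|\bunup\|\,\rd s$ via Cauchy--Schwarz and the a~priori estimate \eqref{apriori:est:1}, whereas you keep $-\nu\|\bunup\|^2+\nu(\nabla\bunup,\nabla\bv)\leq\tfrac{\nu}{2}\|\bv\|^2$ by completing the square; both variants give a viscous remainder that vanishes as $\nu'\to 0$.
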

\begin{proof}
Let $\{\bu^{\nu'}\}_{\nu'}$ be a subnet converging to some $\bu$ in $\mathcal \Cloc(I,H_\rw)$ as $\nu' \to 0$. Thus, $\bu$ satisfies condition \ref{Edissi} of \cref{Edissipative}. 

Now let us prove that $\bu$ satisfies condition \ref{Edissii}. By a simple density argument, it suffices to show that \eqref{3dEweakiiineq} holds for any test function $\bv \in \mC^\infty(I \times \mathbb{R}^3)^3$ that is $\Omega$-periodic, divergence-free and compactly supported on $I$ (see \cite[Section 4.4]{Lions2013}). Let $\bv$ be such a test function. From \eqref{eq3b3}, it follows that, for every $\nu',$
\begin{align*}
	\frac{\rd}{\rd t} (\bunup, \bv) - (\bunup, \partial_t \bv) + \nu' (\nabla \bunup, \nabla \bv) - (\bunup \otimes \bunup, \nabla \bv) = (\f, \bv)
\end{align*}
in the sense of distributions on $I$.

Since $-\partial_t\bv= E(\bv)+\mathbb P[(\bv\cdot\nabla) \bv]$ and
\begin{multline*}
	(\bunup, \bP [(\bv \cdot \nabla)\bv]) - (\bunup \otimes \bunup, \nabla \bv)
	= (\bunup, (\bv \cdot \nabla)\bv) - ((\bunup \cdot \nabla)\bv, \bunup) \\
	= - ([(\bunup - \bv) \cdot \nabla] \bv, \bunup)
	= - ([(\bunup - \bv) \cdot \nabla] \bv, \bunup - \bv),
\end{multline*}
then
\begin{align*}
\frac{d}{dt} (\bunup, \bv) + (\bunup, E(\bv)) - ([(\bunup - \bv) \cdot \nabla] \bv, \bunup - \bv) + \nu' (\nabla \bunup, \nabla \bv) = (\f, \bv).
\end{align*}

Note also that
\begin{align*}
	\frac{d}{dt} |\bv|^2 = 2 (\bv, \partial_t \bv) = - 2 (\bv, E(\bv)).
\end{align*}
Combining the last two equations with the energy inequality $\frac{d}{dt} |\bunup|^2 \leq 2 (\f, \bunup)$, which follows from \eqref{energy:ineq:2}, we obtain  
\begin{multline*}
\frac{d}{dt} |\bunup - \bv|^2 \leq 2 (\bunup - \bv, E(\bv) + \f) - 2 ([(\bunup - \bv) \cdot \nabla] \bv, \bunup - \bv) + 2\nu' (\nabla \bunup, \nabla \bv).
\end{multline*}

Observe that 
\begin{align*}
	&- 2 ([(\bunup - \bv) \cdot \nabla] \bv, \bunup - \bv) = - 2 \int_\Omega [((\bunup-\bv)\cdot \nabla)\bv]\cdot (\bunup-\bv) \rd \bx \\
	&\qquad\qquad = -2 \int_\Omega (\bunup-\bv)_i\partial_i\bv_j(\bunup-\bv)_j \,\rd \bx \\
	&\qquad\qquad= -2 \int_\Omega (\bu^{\nu'}-\bv)_i\frac{\partial_i\bv_j+\partial_j\bv_i}{2}(\bu^{\nu'}-\bv)_j \,\rd \bx
	= -2 \int_\Omega(d(\bv)(\bu^{\nu'}-\bv))\cdot(\bu^{\nu'}-\bv) \,\rd \bx \\
	&\qquad\qquad \leq 2 \int_\Omega \sup\{-(d(\bv)\xi)\cdot \xi: |\xi|=1\}|\bu^{\nu'}-\bv|^2 \, \rd \bx \\
	&\qquad\qquad = -2 \int_\Omega \inf\{\xi^T d(\bv)\xi: |\xi|=1\}|\bu^{\nu'}-\bv|^2 \, \rd \bx \\
	&\qquad\qquad \leq 2 \int_\Omega d^-(\bv)|\bu^{\nu'}-\bv|^2 \, \rd \bx
	\leq 2\| d^-(\bv)\|_{L^\infty}|\bu^{\nu'}-\bv|^2.
\end{align*}

Thus,
\begin{align*}
	\frac{\rd}{\rd t} |\bunup - \bv|^2 \leq 2 (\bunup - \bv, E(\bv) + \f) + 2\| d^-(\bv)\|_{L^\infty}|\bu^{\nu'}-\bv|^2 + 2 \nu' \| \bv\| \|\bunup\|
\end{align*}
in the sense of distributions on $I$. Choosing an appropriate sequence of test functions on $I$ and invoking the Lebesgue differentiation theorem, similarly as in  \cite[Chapter II, Appendix B.1]{FMRT2001}, we obtain the following Gronwall-type inequality
\begin{align}\label{ineq:u:v}
	|\bunup(t) - \bv(t)|^2 
	&\leq \exp \left( 2 \int_{t_0}^t \| d^-(\bv)\|_{L^\infty} \, \rd s \right) |\bunup(t_0) - \bv(t_0)|^2 \notag\\
	&\qquad + 2 \int_{t_0}^t  \exp \left( 2 \int_s^t \| d^-(\bv)\|_{L^\infty} \, \rd \tau \right)  (\bunup - \bv, E(\bv) + \f) \, \rd s \notag\\
	&\qquad + 2 \nu' \int_{t_0}^t \exp \left( 2 \int_s^t \| d^-(\bv)\|_{L^\infty} \, \rd \tau \right) \|\bv\| \|\bunup\| \,\rd s,
\end{align}
for all $t \in I$.

Fix $\nu_0 > 0$ such that all parameters $\nu'$ satisfy $\nu' \leq \nu_0$. From \eqref{apriori:est:1}, we can bound the last term in the right-hand side of \eqref{ineq:u:v} by
\begin{multline}\label{ineq:last:term}
	2 \nu'^{1/2} \exp \left( 2 \int_{t_0}^t \| d^-(\bv)\|_{L^\infty} \, \rd \tau  \right) \|\bv\|_{L^2(t_0,t;V)} \left( \nu' \int_{t_0}^t \|\bunup\|^2 \, \rd s \right)^{1/2} \\
	\leq c \nu'^{1/2} \exp \left( 2 \int_{t_0}^t \| d^-(\bv)\|_{L^\infty} \, \rd \tau  \right) \|\bv\|_{L^2(t_0,t;V)} e^{\frac{\nu_0 \lambda_1}{2} (t - t_0)} \left[ |\bunup(t_0)|^2 + \frac{1}{\nu_0 \lambda_1}\|\f\|_{L^2(t_0,t;H)}^2  \right]^{1/2}.
\end{multline}
Since the net $\{\bunup(t_0)\}_{\nu'}$ is in the compact set $K$, and hence is bounded in $H$, then the expression in the right-hand side of \eqref{ineq:last:term} vanishes as $\nu' \to 0$. 

Moreover, since $\bunup \to \bu$ in $\mC_{\rloc}(I,H_{\rw})$, then together with the bound \eqref{apriori:est:1} it follows that the second term in the right-hand side of \eqref{ineq:u:v} converges as $\nu' \to 0$ to 
\begin{align*}
	2 \int_{t_0}^t  \exp \left( 2 \int_s^t \| d^-(\bv)\|_{L^\infty} \, \rd \tau \right)  (\bu - \bv, E(\bv) + \f) \, \rd s .
\end{align*}

Additionally, again since $\{\bunup(t_0)\}_{\nu'} \subset K$ then, modulo a subnet, we have the strong convergence: $\bunup(t_0) \to \bu(t_0)$ in $H$. Combining these facts, we obtain by taking the $\liminf$ as $\nu' \to 0$ in \eqref{ineq:u:v} that
\begin{multline*}
	|\bu(t) - \bv(t)|^2 \leq \liminf_{\nu' \to 0} 	|\bunup(t) - \bv(t)|^2
	\leq \exp \left( 2 \int_{t_0}^t \| d^-(\bv)\|_{L^\infty} \, \rd s \right) |\bu(t_0) - \bv(t_0)|^2 \\
	+ 2 \int_{t_0}^t  \exp \left( 2 \int_s^t \| d^-(\bv)\|_{L^\infty} \, \rd \tau \right)  (\bu - \bv, E(\bv) + \f) \, \rd s,
\end{multline*}
which shows \eqref{3dEweakiiineq}, and concludes the proof.
\end{proof}

\subsection{Convergence of statistical solutions of 2D Navier-Stokes to 2D Euler} 
\label{subsec:2D:NSE:Euler}

In this section, we verify the assumptions of \cref{Main2alt} to deduce the convergence of a net of trajectory statistical solutions of the 2D NSE towards a trajectory statistical solution of the 2D Euler equations, as stated in \cref{MainThmApp2D} below.

We start by fixing the required setting from \cref{Main2alt}. Let $I \subset \RR$ be an interval closed and bounded on the left with left endpoint $t_0$. Take $X = \Wrw$, for any given $1 < r < \infty$, and define, for each fixed $\nu > 0$,
\begin{eqnarray}\label{def:Snu}
S_\nu: \Wrw &\rightarrow & \mC_{\rloc}(I,\Wrw) \nonumber \\
\bu_0 &\mapsto& \bunu,
\end{eqnarray}
where $\bunu$ is the unique weak solution of \eqref{nse} on $I$ in the sense of \cref{NSweak} satisfying $\bunu(t_0) = \bu_0$. Thus, the operator $P_\nu=\Pi_{t_0}S_\nu$, as defined in \cref{Main2alt}, is the identity operator.

Note that since $\Wr$ is a separable Banach space then every Borel probability measure on $\Wrw$ is also a Borel probability measure on $\Wr$ (and vice-versa), and hence tight in $\Wr$, i.e. inner regular with respect to the family of compact subsets of $\Wr$ (see \cref{subsec:meas:theory}), which are also compact sets in $\Wrw$. In summary, every Borel probability measure on $\Wrw$ (or, equivalently, $\Wr$) is tight in $\Wrw$. For this reason, we consider $\mu_0$ as any Borel probability measure in $\Wr$ in the statements of this section. 

Then, given a Borel probability measure $\mu_0$ on $\Wr,$ we set, for simplicity, $\mu_\nu=\mu_0$ for all $\nu>0$.  Thus, $P_\nu\mu_\nu= \mu_\nu = \mu_0$ and assumption \ref{Main2:H0} is immediately satisfied. Also, from the tightness of $\mu_0$ we obtain the existence of a sequence $\{K_n\}_{n\in\NN}$ of compact sets in $\Wrw$ satisfying \ref{Main2:H2}.

As we shall see, assumptions \ref{Main2:H1}, \ref{Main2:H3} and \ref{Main2:H4} actually hold for any compact set $K$ of $\Wrw$, with $\mU$ defined as
\begin{eqnarray}\label{ue2d}
	\mU_I = \left\{\begin{array}{cc}\bu \in \Cloc(I,\Wrw):& \bu \mbox{ is a weak solution of the 2D Euler equations}\\
	&\mbox{\eqref{ee} on  $I$ in the sense of \cref{Eweak}}
	\end{array}\right\}.
\end{eqnarray}

For simplicity, we assume throughout this section that $r$ is restricted to the range $2 \leq r < \infty$. In particular, this allows us to obtain the bound \eqref{estapriori2} below for the $L^r$-norm of the vorticity associated with a weak solution of the 2D NSE. We note, however, that the case $1 < r < 2$ can also be treated, by appealing to the notion of renormalized solutions, see \cite[Section 4.1]{Lions2013}. This case is indeed considered in the work \cite{WW2022}, where an analogous convergence result for trajectory statistical solutions of the 2D NSE towards a trajectory statistical solution of 2D Euler is obtained, albeit under the assumption of zero forcing term and with a slightly different setting than ours, particularly concerning the definition of $X$ and the fact that the spatial domain is taken as $\RR^2$.

The next proposition shows that condition \ref{Main2:H1} from \cref{Main2alt} holds true in this context.

\begin{prop}\label{contSnu}
	Let $K$ be a compact set in $\Wrw$, $2\leq r<\infty$. Then, for each $\nu > 0$, the operator $S_{\nu}|_K: K \rightarrow \mC_{\rloc}(I,\Wrw)$ is continuous.
\end{prop}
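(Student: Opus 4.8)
The plan is to exploit that $\Wr$ is separable and reflexive for $2\le r<\infty$, so that its weak topology is metrizable on bounded sets, and then to combine the uniform a priori bounds of \cref{propestapriori} with an Arzel\`a--Ascoli argument carried out in the weak topology, closing the loop with the uniqueness of weak solutions of the 2D NSE. First I would reduce to sequences: since $K$ is weakly compact in the separable reflexive space $\Wr$, it is norm-bounded, say $K\subset B_{\Wr}(R)$, and the weak topology on $B_{\Wr}(R)$ is metrizable; hence $K$ is first countable and continuity of $S_\nu|_K$ is equivalent to sequential continuity. So I take $\bu_0^k\rightharpoonup\bu_0$ in $\Wr$ with $\bu_0^k,\bu_0\in K$, set $\bunu_k=S_\nu\bu_0^k$, and must show $\bunu_k\to S_\nu\bu_0$ in $\Cloc(I,\Wrw)$.

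Next, I would record the uniform bounds. Fix a compact subinterval $J=[a,b]\subset I$ (so $J\subset[t_0,b]$). Since $\|\bu_0^k\|_{\Wr}\le R$ uniformly, estimate \eqref{estapriori2} together with \eqref{divcurl} bounds $\{\bunu_k\}$ uniformly in $L^\infty(J,\Wr)$, while \eqref{estapriori3} bounds $\{\partial_t\bunu_k\}$ uniformly in $L^2(J,V')$ (here $\nu$ is fixed). In particular all the trajectories $\bunu_k$, and also $S_\nu\bu_0$, take values in a fixed ball $B_{\Wr}(R')$, which is weakly compact and weakly metrizable; I denote by $d$ a metric inducing the weak topology on $B_{\Wr}(R')$, built from a countable family $\{\phi_m\}\subset\Tsp$ dense in $\Wrp$ (such a family exists since $\Tsp$ is dense in $H$ and $H$ is dense in $\Wrp$, the latter because $\Wr\hookrightarrow H$ is dense, continuous, with $\Wr$ reflexive).

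The compactness step is the crux. The time-derivative bound gives, for every $\phi\in\Tsp$ and all $s,t\in J$,
\[
|\langle\bunu_k(t)-\bunu_k(s),\phi\rangle_{V',V}|\le \|\partial_t\bunu_k\|_{L^2(J,V')}\,|t-s|^{1/2}\,\|\phi\|_V\le C\,|t-s|^{1/2}\,\|\phi\|_V,
\]
uniformly in $k$. Truncating the series defining $d$ (the tail is uniformly small) and applying this estimate to the finitely many remaining functionals $\phi_m$ shows that $\{\bunu_k\}$ is equicontinuous as a family of maps $J\to(B_{\Wr}(R'),d)$. By Arzel\`a--Ascoli in $\mC(J,(B_{\Wr}(R'),d))$, a subsequence converges in $\mC(J,\Wrw)$; exhausting $I$ by compact intervals and diagonalizing, I obtain a subsequence $\bunu_{k_j}$ converging to some $\bw$ in $\Cloc(I,\Wrw)$.

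Finally, I would identify $\bw$ and conclude. The uniform $L^\infty(J,\Wr)\subset L^\infty(J,V)$ bound (using $\Wr\hookrightarrow V$ for $r\ge2$) and $L^2(J,V')$ bound on the time derivative, together with the compact embedding $\Wr\hookrightarrow\hookrightarrow H$, allow me to invoke the Aubin--Lions lemma \cite[Theorem A.11]{FMRT2001} to get $\bunu_{k_j}\to\bw$ strongly in $\mC(J,H)$; this controls the nonlinear term exactly as in the proof of \cref{thmNStoEuler}, and passing to the limit in the weak formulation \eqref{eq3b} (the viscous term surviving since $\nu$ is fixed, via weak $L^2(J,V)$ convergence) shows $\bw$ is a weak solution of the 2D NSE on $I$ in the sense of \cref{NSweak}. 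Moreover $\bunu_{k_j}(t_0)=\bu_0^{k_j}\rightharpoonup\bu_0$ forces $\bw(t_0)=\bu_0$, so by uniqueness of weak solutions of the 2D NSE (see \cref{subsec:setting}), $\bw=S_\nu\bu_0$. Since every subsequence of $\{\bunu_k\}$ has a further subsequence converging, in the metric space obtained by restricting $\Cloc(I,\Wrw)$ to values in $B_{\Wr}(R')$, to the same limit $S_\nu\bu_0$, the whole sequence converges; thus $S_\nu\bu_0^k\to S_\nu\bu_0$ in $\Cloc(I,\Wrw)$, proving continuity of $S_\nu|_K$. The main obstacle is precisely this last upgrade: turning the standard modes of convergence (weak-star in $L^\infty(J,\Wr)$, strong in $\mC(J,H)$) into convergence in $\Cloc(I,\Wrw)$, i.e.\ uniform-in-time weak convergence, which is what the $V'$-equicontinuity estimate plus metrizability of the weak topology on bounded subsets of the separable reflexive space $\Wr$ deliver through Arzel\`a--Ascoli.
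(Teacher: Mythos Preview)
Your proposal is correct and follows essentially the same route as the paper: reduce to sequences via metrizability of the weak topology on bounded sets, use the a priori estimates \eqref{estapriori2}--\eqref{estapriori3} to get uniform $L^\infty(J,\Wr)$ and $L^2(J,V')$ bounds, obtain equicontinuity in $\mC(J,\Wrw)$ through a density argument (the paper approximates $\bw\in\Wrp$ by $\bv\in V$ rather than by elements of $\Tsp$, but this is cosmetic), apply Arzel\`a--Ascoli, identify the limit as a weak solution of the 2D NSE via the same Aubin--Lions/strong-$H$ argument as in \cref{thmNStoEuler}, and conclude by uniqueness plus the subsequence principle. Your treatment of the limit identification is in fact more explicit than the paper's, which simply cites \cref{thmNStoEuler} for this step even though $\nu$ is fixed here.
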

\begin{proof}
Since the weak topology is metrizable on bounded subsets of $\Wr$, it suffices to show that, for any given $\bu_0 \in K$ and any sequence $\{\bu_{0,n}\}_{n \in \NN}$ in $K$ converging weakly to $\bu_0,$ it follows that $S_\nu(\bu_{0,n})$ converges to $S_\nu(\bu_0)$ in $\Cloc(I,\Wrw)$.

Consider any compact subinterval $J \subset I$ with left endpoint $t_0$. It is sufficient to show that $S_\nu(\bu_{0,n})$ converges to $S_\nu(\bu_0)$ in $\mC(J,\Wrw)$. We first show that $\{S_\nu(\bu_{0,n})\}_n$ is relatively compact in $\mC(J,\Wrw)$. 

Since $\{\bu_{0,n}\}_n$ is contained in the compact set $K$, then $\{\bu_{0,n}\}_n$ is a bounded sequence in $\Wr$. Thus, from \eqref{divcurl} and \eqref{estapriori2} it follows that $\{S_\nu(\bu_{0,n})\}_n$ is uniformly bounded in $\mC(J, \Wr)$. We may thus consider a ball $B_{\Wr}(R)$ in $\Wr$, $R > 0$, such that $S_\nu(\bu_{0,n})(t) \subset B_{\Wr}(R)$ for all $n \in \NN$ and $t \in J$. Note also that, from \eqref{estapriori3}, it follows that $\{\partial_t S_\nu(\bu_{0,n})\}_n$ is uniformly bounded in $L^2(J,V')$.

Let $\bw \in \Wrp$ and $\varepsilon > 0$. Since $V$ is dense in $\Wrp$, we may take $\bv \in V$ such that $\| \bv - \bw \|_{\Wrp} < \varepsilon/(4R)$. Hence, for all $n \in \NN$ and $s < t$, we have
\begin{multline}\label{ineq:Snuu0n}
	|\langle \bw, S_\nu(\bu_{0,n})(t) - S_\nu(\bu_{0,n})(s) \rangle_{\Wrp, \Wr}| \\
	\qquad\leq |\langle \bw - \bv, S_\nu(\bu_{0,n})(t) - S_\nu(\bu_{0,n})(s) \rangle_{\Wrp,\Wr}|  + |\langle S_\nu(\bu_{0,n})(t) - S_\nu(\bu_{0,n})(s), \bv \rangle_{V',V}| .
\end{multline}
The first term is estimated as 
\begin{align*}
	|\langle \bw - \bv, S_\nu(\bu_{0,n})(t) - S_\nu(\bu_{0,n})(s) \rangle_{\Wrp,\Wr}|  
	\leq 2R \|\bw - \bv\|_{\Wrp} < \frac{\varepsilon}{2}.
\end{align*}
Regarding the second term in \eqref{ineq:Snuu0n}, we have
\begin{multline}\label{ineq:Snuu0n:2}
	|\langle S_\nu(\bu_{0,n})(t) - S_\nu(\bu_{0,n})(s), \bv \rangle_{V',V}| 
	= \left| {\left\langle \int_s^t \partial_t S_\nu(\bu_{0,n})(\tau) \rd \tau, \bv \right\rangle}_{V',V} \right| \\
	\leq \left( \int_s^t \|\partial_t S_\nu(\bu_{0,n})(\tau)\|_{V'} \rd \tau \right) \|\bv \|
	\leq |t-s|^{1/2} \| \partial_t S_\nu(\bu_{0,n})\|_{L^2(J,V')} \|\bv\| \\
	\leq C |t-s|^{1/2} \|\bv\|.
\end{multline}
Hence, it follows from \eqref{ineq:Snuu0n}-\eqref{ineq:Snuu0n:2} that, for all $s,t \in J$ with $|t-s| < \varepsilon^2/(2C \|\bv\|)^2,$
\begin{align*}
	|\langle \bw, S_\nu(\bu_{0,n})(t) - S_\nu(\bu_{0,n})(s) \rangle_{\Wrp, \Wr}| 
	< \frac{\varepsilon}{2} + C |t-s|^{1/2} \|\bv\| < \varepsilon.
\end{align*}
Since $\bw$ and $\varepsilon$ are arbitrary, this implies that $\{S_\nu(\bu_{0,n})\}_n$ is equicontinuous in $\mC(J, B_{\Wr}(R)_\rw)$. Moreover, since $B_{\Wr}(R)$ is a compact set in $\Wrw$, we also have that, for each fixed $t \in J$, $\{S_\nu(\bu_{0,n})(t)\}_n$ is relatively compact in $B_{\Wr}(R)_\rw$. Therefore, by the Arzel\`a-Ascoli theorem, it follows that $\{S_\nu(\bu_{0,n})\}_n$ is relatively compact in $\mC(J,B_{\Wr}(R)_\rw)$, and hence in $\mC(J,\Wrw)$.

Thus, there exists a subsequence $\{S_\nu(\bu_{0,n'})\}_{n'}$ and $\tilde\bu \in \mC(J,\Wrw)$ such that $S_\nu(\bu_{0,n'}) \to \tilde\bu$ in $\mC(J,\Wrw)$. In particular, $S_\nu(\bu_{0,n'})(t_0)=\bu_{0,n'}$ converges weakly to $\tilde\bu(t_0)$ in $\Wr$ and, by uniqueness of the limit, $\tilde\bu(t_0) = \bu_0$. Also, by \cref{thmNStoEuler}, we have that $\tilde\bu$ is a weak solution of the 2D NSE \eqref{nse} on $I$.
By uniqueness of solutions, it follows that $\tilde\bu = S_\nu(\bu_0)$.  Then, by a contradiction argument, we obtain that in fact the entire sequence $\{S_\nu(\bu_{0,n})\}_n$ converges to $S_\nu(\bu_0)$ in $\mC(J,\Wrw)$.  
This concludes the proof.
\end{proof}

To verify assumptions \ref{Main2:H3} and \ref{Main2:H4}, we fix $\nu_0 > 0$ and introduce the following auxiliary space. Let $R>0$ and $J\subseteq I$ be an interval  closed and bounded on the left with left endpoint $t_0$, and consider the following inequalities for $\bu \in \mC(J,\Wrw)$:
\begin{align}\label{ineq:sup:Wrw}
\|\nabla^\perp\cdot\bu(t)\|_{L^r} \leq
	\left(R^r + (\nu_0\lambda_1)^{1-r}\|\nabla^\perp \cdot \f\|^r_{L^r(t_0,t;L^r)} \right)^{1/r} e^{(r-1)\nu_0\lambda_1(t-t_0)/r},
\end{align}
and
\begin{multline}\label{ineq:Dt:L2Vp}
	\|\partial_t \bu\|_{L^2(t_0,t;V')} 	\leq c\lambda_1^{-1/2} \|\f\|_{L^2(t_0,t;H)} + \\ 
	c\lambda_1^{-1/2+1/r} \left[ \nu_0 +\left(R^2 + \frac{1}{\nu_0 \lambda_1} \|\f\|_{L^2(t_0,t;H)}^2  \right)e^{\nu_0 \lambda_1(t - t_0)} \right]\times\\ \left( R^r + (\nu_0\lambda_1)^{1-r}\|\nabla^\perp \cdot \f\|_{L^r(t_0,t;L^r)}^r \right)e^{(r-1)\nu_0\lambda_1(t-t_0)},
\end{multline}
for $t\in J$, where $c>0$ is a universal constant. Then, we define 
\begin{align}
	\mY_{J}(R) = \left\{ \bu \in \mC(J,\Wrw) \,:\,\bu \mbox{ satisfies } \eqref{ineq:sup:Wrw} \mbox{ and } \eqref{ineq:Dt:L2Vp} \mbox{ for all } t\in J \right\}.	
 \label{def:YJR2d}
\end{align}

Note that, for all $0 < \nu \leq \nu_0$ and for every initial datum in $B_{\Wr}(R)$, the restriction to $J$ of the corresponding weak solution of the 2D $\nu$-NSE belongs to $\mY_{J}(R)$.

We observe that given any sequence of compact subintervals $J_n \subset I$, $n \in \NN$, each with left endpoint $t_0$ and such that $I = \bigcup_n J_n$, then
\begin{align}\label{eq:YI:R:Jn:2d}
	\mY_I(R) =  \bigcap_{n=1}^\infty \Pi_{J_n}^{-1} \mY_{J_n}(R),
\end{align}
where $\Pi_{J_n}:  \mC(I,\Wrw)\rightarrow  \mC(J_n,\Wrw)$ denotes the restriction operator on $J_n$ defined in \eqref{def:restriction:op}. We now show that this auxiliary space is compact.

\begin{lemma}\label{YJRcompact}
Let $R > 0$ and let $J \subset I$ be a compact subinterval with left endpoint $t_0$. Then, $\mY_{J}(R)$ is a compact subset of $\mC(J,\Wrw)$. Consequently, $\mY_I(R)$ is a compact subset of $\mC_{\rloc}(I,\Wrw)$.
\end{lemma}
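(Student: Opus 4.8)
The plan is to prove the principal assertion, that $\mY_J(R)$ is compact in $\mC(J,\Wrw)$, and then deduce compactness of $\mY_I(R)$ from the representation \eqref{eq:YI:R:Jn:2d} by a Tychonoff/inverse-limit argument. For the principal assertion I will show that $\mY_J(R)$ is both relatively compact and closed. The first step is to confine the set to a compact metrizable target: combining the divergence--curl estimate \eqref{divcurl} with the pointwise bound \eqref{ineq:sup:Wrw}, every $\bu\in\mY_J(R)$ satisfies $\|\bu(t)\|_{\Wr}\leq R'$ for all $t\in J$, where $R'$ is $c$ times the supremum over the compact interval $J$ of the right-hand side of \eqref{ineq:sup:Wrw}, which is finite. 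Hence $\mY_J(R)\subset\mC(J,B_{\Wr}(R')_\rw)$. Since $\Wr$ is reflexive and separable for $2\leq r<\infty$, the ball $B_{\Wr}(R')_\rw$ is a compact metrizable space (as already used in the proof of \cref{contSnu}), so $\mC(J,B_{\Wr}(R')_\rw)$ is metrizable for uniform convergence and I may argue throughout with sequences.

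For relative compactness I would invoke the Arzel\`a--Ascoli theorem. Pointwise relative compactness holds automatically because the target $B_{\Wr}(R')_\rw$ is compact, so only equicontinuity remains, and for this I reuse verbatim the estimate from the proof of \cref{contSnu}. Given $\bw\in\Wrp$ and $\varepsilon>0$, I pick $\bv\in V$ with $\|\bw-\bv\|_{\Wrp}$ small (using that $V$ is dense in $\Wrp$) and split $\langle\bw,\bu(t)-\bu(s)\rangle_{\Wrp,\Wr}$ into a term bounded by $2R'\|\bw-\bv\|_{\Wrp}$ and the term $\langle\bu(t)-\bu(s),\bv\rangle_{V',V}$, which upon writing $\bu(t)-\bu(s)=\int_s^t\partial_t\bu\,\rd\tau$ is bounded by $|t-s|^{1/2}\|\partial_t\bu\|_{L^2(s,t;V')}\|\bv\|$. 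The bound \eqref{ineq:Dt:L2Vp} furnishes a bound on $\|\partial_t\bu\|_{L^2(J,V')}$ that is uniform over $\bu\in\mY_J(R)$, so this modulus of continuity is uniform. Arzel\`a--Ascoli then gives that $\mY_J(R)$ is relatively compact in $\mC(J,\Wrw)$.

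For closedness, let $\bu_n\to\bu$ in $\mC(J,\Wrw)$ with $\bu_n\in\mY_J(R)$. To pass \eqref{ineq:sup:Wrw} to the limit I use that $\nabla^\perp\cdot$ is a bounded linear, hence weakly continuous, operator $\Wr\to L^r$, so $\nabla^\perp\cdot\bu_n(t)\rightharpoonup\nabla^\perp\cdot\bu(t)$ in $L^r$ for each $t$, whence weak lower semicontinuity of the norm yields the bound for $\bu$. To pass \eqref{ineq:Dt:L2Vp} to the limit I note the $\partial_t\bu_n$ are bounded in $L^2(J,V')$, extract a weakly convergent subsequence, identify its limit with $\partial_t\bu$ by passing to the limit in the scalar distributional identity $\int\bu_n\varphi'=-\int\partial_t\bu_n\varphi$, and invoke weak lower semicontinuity of the $L^2(t_0,t;V')$ norm for each $t\in J$. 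Thus $\bu\in\mY_J(R)$, so $\mY_J(R)$ is closed; being also relatively compact, it is compact.

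Finally, for the passage from $J$ to $I$, I fix compact subintervals $J_n$ with left endpoint $t_0$ and $\bigcup_nJ_n=I$. Because \eqref{ineq:sup:Wrw}--\eqref{ineq:Dt:L2Vp} at a point $t\in J_m\subset J_n$ are inherited by restriction, $\Pi_{J_m}$ maps $\mY_{J_n}(R)$ into $\mY_{J_m}(R)$, so $\{\mY_{J_n}(R),\Pi_{J_m}\}$ is an inverse system of compact Hausdorff spaces whose inverse limit is exactly $\mY_I(R)$ by \eqref{eq:YI:R:Jn:2d}; equivalently, the embedding $\bu\mapsto(\Pi_{J_n}\bu)_n$ is a homeomorphism of $\Cloc(I,\Wrw)$ onto a closed subspace of $\prod_n\mC(J_n,\Wrw)$ carrying $\mY_I(R)$ onto a closed subset of the compact product $\prod_n\mY_{J_n}(R)$, whence $\mY_I(R)$ is compact. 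I expect the main obstacle to be the closedness argument for the time-derivative bound: one must extract a weak $L^2(J,V')$ limit of $\partial_t\bu_n$, correctly identify it with $\partial_t\bu$ despite the weak spatial topology on the values, and apply lower semicontinuity uniformly in the endpoint $t$. The equicontinuity in the relative compactness step, by contrast, is essentially a transcription of the estimate already performed for \cref{contSnu}.
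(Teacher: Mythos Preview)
Your proposal is correct and follows essentially the same approach as the paper: confine to $\mC(J,B_{\Wr}(R')_\rw)$, obtain relative compactness via Arzel\`a--Ascoli using the same equicontinuity estimate as in \cref{contSnu}, and then show the limit stays in $\mY_J(R)$ by passing the two defining inequalities through weak lower semicontinuity. The only noteworthy differences are organizational: the paper argues sequential compactness directly rather than splitting into ``relatively compact plus closed''; for the time-derivative bound it extracts weak convergence of $\partial_t\bu_{n'}$ in $L^2(t_0,t;V')$ for \emph{every} $t\in J$ via a dense-points-plus-diagonalization trick, whereas your route (one weak limit in $L^2(J,V')$, identification via the distributional identity, then lower semicontinuity for each $t$ by restriction) is a bit cleaner; and for the passage from $J$ to $I$ the paper invokes a standard diagonalization argument rather than your Tychonoff/inverse-limit formulation, which are equivalent here.
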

\begin{proof}
First, from the definition of $\mY_J(R)$ in \eqref{def:YJR2d} it follows that, for all $\bu \in \mY_J(R)$, it holds
\begin{align}\label{ineq:gradp:u:RJ}
	\|\nabla^\perp\cdot\bu(t)\|_{L^r} \leq R_J \quad \mbox{ and } \quad \| \partial_t \bu\|_{L^2(t_0,t;V')} \leq \tilde{R}_J, \quad \mbox{for all } t \in J,
\end{align}
where $R_J, \tilde{R}_J$ are positive constants which depend on $J$, but are independent of $t$. In particular, the first inequality in \eqref{ineq:gradp:u:RJ} implies that $\mY_J(R) \subset \mC(J, B_{\Wr}(R_J)_\rw)$, so that $\mY_J(R)$ is metrizable, and it suffices to show that it is sequentially compact. 

Let $\{\bu_n\}_n$ be a sequence in $\mY_J(R)$. Then, $\{\bu_n\}_n$ is uniformly bounded in $\mC(J,\Wr)$ and $\{\partial_t \bu_n\}_n$ is uniformly bounded in $L^2(t_0,t,V')$ for all $t \in J$. With a similar argument as in the proof of \cref{contSnu}, it follows that $\{\bu_n\}_n$ is relatively compact in $\mC(J,\Wrw)$. Then, we can show that there exists a subsequence $\{\bu_{n'}\}_{n'}$ and $\bu \in \mC(J,\Wrw)$ with $\partial_t \bu \in L^2(t_0,t,V')$ such that
\begin{gather}
	\bu_{n'} \to \bu \mbox{ in } \mC(J,\Wrw), \label{conv:unp}\\
	\partial_t \bu_{n'} \rightharpoonup \partial_t \bu \mbox{ in } L^2(t_0,t; V'), \quad \mbox{ for all } t \in J. \label{conv:der:unp}
\end{gather}
To see this, first let $\{\bu_{n'}\}_{n'}$ be a subsequence for which $\bu_{n'} \to \bu$ in $\mC(J,\Wrw)$. Then, consider a sequence of points $\{t_k\}_{k \in \NN} \subset J$ that is dense in $J$. We have that $\{\partial_t \bu_{n'}\}$ is uniformly bounded in $L^2(t_0,t_k;V')$ for all $k$. Then, by a diagonalization argument, we may construct a further subsequence of $\{\bu_{n'}\}_{n'}$, which we still denote as $\{\bu_{n'}\}_{n'}$ for simplicity, such that $\partial_t \bu_{n'} \rightharpoonup \partial_t \bu$ in $L^2(t_0,t_k;V')$ for all $k$. Due to the continuity of the functional $t \mapsto \int_{t_0}^t \langle \partial_t \bu_{n'} - \partial_t \bu, \bv \rangle_{V',V} ds$ for any $\bv \in L^2(J;V')$, and the density of $\{t_k\}$ in $J$, we thus obtain that in fact $\partial_t \bu_{n'} \rightharpoonup \partial_t \bu$ in $L^2(t_0,t;V')$ for all $t \in J$.

With the convergences in \eqref{conv:unp} and \eqref{conv:der:unp}, we can pass to the limit in the inequalities from the definition of $\mY_J(R)$ and conclude that $\bu \in \mY_J(R)$. This concludes the proof of the compactness of $\mY_J(R)$. 

Consequently, in view of the characterization \eqref{eq:YI:R:Jn:2d}, it follows by employing again a standard diagonalization argument that $\mY_I (R)$ is compact in $\mC_{\rloc}(I,\Wrw)$.
\end{proof}

Next, we invoke \cref{YJRcompact} to verify assumption \ref{Main2:H3}.

\begin{prop}\label{SnuKinmK}
Let $K$ be a compact set in $\Wrw$, $2 \leq r < \infty$. Then, there exists a compact set $\mK \subset \mC_{\rloc}(I,\Wrw)$ such that
\begin{align*}
	S_\nu (K) \subset \mK, \quad \mbox{ for all } \nu \in (0,\nu_0].
\end{align*}
\end{prop}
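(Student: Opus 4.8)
The plan is to take $\mK$ to be the compact auxiliary set $\mY_I(R)$ introduced in \eqref{def:YJR2d}, for a radius $R>0$ chosen large enough to accommodate all initial data in $K$. Since \cref{YJRcompact} already guarantees that $\mY_I(R)$ is compact in $\mC_{\rloc}(I,\Wrw)$, the entire task reduces to exhibiting such an $R$ and verifying that $S_\nu(\bu_0) \in \mY_I(R)$ for every $\bu_0 \in K$ and every $\nu \in (0,\nu_0]$; this is exactly condition \ref{Main2:H3} of \cref{Main2alt}.

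First I would fix the radius. As $K$ is compact in $\Wrw$, it is weakly bounded and hence norm-bounded in $\Wr$ by the uniform boundedness principle; set $M = \sup_{\bu_0 \in K}\|\bu_0\|_{\Wr} < \infty$. By the Poincar\'e inequality \eqref{ineq:Poincare} together with \eqref{ineq:Wr:V}, and since the vorticity $\nabla^\perp\cdot\bu_0$ is a combination of first-order derivatives of $\bu_0$, both $|\bu_0|$ and $\|\nabla^\perp\cdot\bu_0\|_{L^r}$ are bounded by a fixed multiple of $M$, uniformly over $\bu_0\in K$. I would then choose $R>0$ so large that $|\bu_0|\leq R$ and $\|\nabla^\perp\cdot\bu_0\|_{L^r}\leq R$ for every $\bu_0\in K$.

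Next I would check membership. Fix $\bu_0\in K$, $\nu\in(0,\nu_0]$, and write $\bunu = S_\nu(\bu_0)$, which by construction is a weak solution of the $\nu$-NSE lying in $\mC_{\rloc}(I,\Wrw)$, so that \cref{propestapriori} applies. Inserting $\|\nabla^\perp\cdot\bunu(t_0)\|_{L^r}=\|\nabla^\perp\cdot\bu_0\|_{L^r}\leq R$ into \eqref{estapriori2} and taking $r$-th roots yields precisely \eqref{ineq:sup:Wrw}; likewise, using $\nu\leq\nu_0$, $|\bunu(t_0)|=|\bu_0|\leq R$, and the same vorticity bound in \eqref{estapriori3} reproduces exactly \eqref{ineq:Dt:L2Vp}. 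Thus the restriction $\Pi_J\bunu$ lies in $\mY_J(R)$ for every compact subinterval $J\subset I$ with left endpoint $t_0$, and the characterization \eqref{eq:YI:R:Jn:2d} then upgrades this to $\bunu\in\mY_I(R)$. Since $\bu_0$ and $\nu$ were arbitrary, $S_\nu(K)\subset\mY_I(R)=:\mK$ for all $\nu\in(0,\nu_0]$, and $\mK$ is compact by \cref{YJRcompact}.

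The computations are entirely mechanical once the a priori bounds of \cref{propestapriori} and the compactness of \cref{YJRcompact} are in hand, so I do not expect a genuine analytic obstacle here: the inequalities defining $\mY_J(R)$ were tailored so that they match \eqref{estapriori2} and \eqref{estapriori3} after substituting the initial bounds and $\nu\le\nu_0$. The only point demanding a little care is the very first one, namely extracting a uniform $\Wr$-bound from the mere compactness of $K$ in the \emph{weak} topology; this relies on the standard fact that weakly compact sets are norm-bounded, rather than on any (false) continuity of the norm with respect to the weak topology.
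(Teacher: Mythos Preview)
Your proposal is correct and follows essentially the same route as the paper: choose $R$ so that $K\subset B_{\Wr}(R)$, invoke the a~priori estimates \eqref{estapriori2}--\eqref{estapriori3} to place $\Pi_{J_n}S_\nu(K)$ inside $\mY_{J_n}(R)$ for each compact $J_n$, and conclude via \eqref{eq:YI:R:Jn:2d} that $S_\nu(K)\subset\mY_I(R)=:\mK$, which is compact by \cref{YJRcompact}. The paper's proof is terser on the selection of $R$, but your added care about extracting a uniform $\Wr$-bound from weak compactness is warranted and does not change the argument.
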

\begin{proof}
Let $R > 0$ be such that $K \subset B_{\Wr}(R)$ and let $\{J_n\}_n$ be any sequence of compact subintervals of $I$, each with left endpoint $t_0$, such that $I = \bigcup_{n=1}^\infty J_n$.
From the estimates \eqref{estapriori2} and 
\eqref{estapriori3} in \cref{propestapriori}, it follows that $\Pi_{J_n} S_\nu(K) \subset \mY_{J_n}(R)$ for all $n\in \mathbb N$ and for every $\nu \in (0,\nu_0]$. 	
Thus, from the characterization \eqref{eq:YI:R:Jn:2d}, we deduce that $S_\nu(K)$ is contained in the compact set $\mK = \mY_I(R)$ for all $\nu \in (0,\nu_0]$.
\end{proof}

Finally, we verify that assumption \ref{Main2:H4} from \cref{Main2alt} is satisfied.

\begin{prop}\label{cor:conditions2DNS}
Let $K$ be a compact set in $\Wrw$, $2 \leq r < \infty$. Then,
\[
	\limsup_{\nu \to 0} S_\nu(K) \subset \mU_I,
\]
with $\mU_I$ as defined in \eqref{ue2d}. 
\end{prop}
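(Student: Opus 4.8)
The plan is to reduce the topological superior limit, which is \emph{a priori} a net-theoretic object, to an ordinary vanishing-viscosity \emph{sequence} of Navier--Stokes weak solutions, and then to invoke the individual inviscid limit result \cref{thmNStoEuler}. First I would fix $R > 0$ with $K \subset B_{\Wr}(R)$ and invoke \cref{SnuKinmK} to obtain the compact set $\mK = \mY_I(R) \subset \Cloc(I,\Wrw)$ satisfying $S_\nu(K) \subset \mK$ for all $\nu \in (0,\nu_0]$. Since $\mK$ is compact in the Hausdorff space $\mX$, it is closed, and because the superior limit as $\nu \to 0$ depends only on the tails with arbitrarily small $\nu$, the definition \eqref{def:limsup} gives $\limsup_{\nu \to 0} S_\nu(K) \subset \mK$. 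The key structural fact, established in the proof of \cref{YJRcompact}, is that $\mK$ is not merely compact but also \emph{metrizable}; this is precisely what allows the passage from nets to sequences.

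Next I would take an arbitrary $\bu \in \limsup_{\nu \to 0} S_\nu(K)$ and extract a sequence converging to it. By \eqref{def:limsup}, with the index set $(0,\nu_0]$ directed so that smaller viscosities are larger, one has $\bu \in \overline{\bigcup_{0 < \nu' \leq \nu} S_{\nu'}(K)}$ for every $\nu > 0$. Fixing a metric $d$ on $\mK$ and noting that $\bigcup_{0 < \nu' \leq \nu} S_{\nu'}(K) \subset \mK$, whose closure in $\mX$ coincides with its closure in $\mK$, I can, for each $m \in \NN$, choose $\nu_m \leq \min\{1/m, \nu_0\}$ and $\bu_{0,m} \in K$ such that $\bu_m := S_{\nu_m}(\bu_{0,m})$ satisfies $d(\bu_m, \bu) < 1/m$. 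Then $\nu_m \to 0$ and $\bu_m \to \bu$ in $\Cloc(I,\Wrw)$, and each $\bu_m$ is, by construction, a weak solution of the $\nu_m$-NSE on $I$ with forcing $\f$.

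Finally, the sequence $\{\bu_m\}_m$ is a vanishing-viscosity sequence of weak solutions of the 2D Navier--Stokes equations converging in $\Cloc(I,\Wrw)$ to $\bu$; applying \cref{thmNStoEuler} (whose proof, relying only on the convergence together with the uniform a priori bounds \eqref{estapriori2}--\eqref{estapriori3}, applies verbatim to such a sequence) yields that $\bu$ is a weak solution of the 2D Euler equations in the sense of \cref{Eweak}, i.e. $\bu \in \mU_I$. Since $\bu$ was arbitrary, this proves $\limsup_{\nu \to 0} S_\nu(K) \subset \mU_I$. I expect the main obstacle to be topological rather than analytic: justifying the reduction of the topological superior limit to a genuine sequence with viscosities tending to zero, which rests on the metrizability of the confining compact set $\mK$ and on the identification of closures in $\mX$ with closures in $\mK$. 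Once this sequence is in hand, all the analytic content has already been packaged into \cref{thmNStoEuler}.
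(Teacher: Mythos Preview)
Your proposal is correct and follows essentially the same approach as the paper's own proof: both confine the superior limit inside the metrizable compact set $\mY_I(R)$ via \cref{SnuKinmK}, use metrizability to extract a vanishing-viscosity sequence $\bu^{\nu_j} \to \bu$ in $\Cloc(I,\Wrw)$, and then apply \cref{thmNStoEuler}. Your version spells out the sequence extraction more explicitly than the paper does, but the argument is the same.
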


\begin{proof}
From the proof of \cref{SnuKinmK}, we know that 
\begin{align*}
	\limsup_{\nu \to 0} S_\nu(K) \subset \mY_I(R),
\end{align*}
for any $R> 0$ such that $K \subset B_{\Wr}(R)$. Since $\mY_I(R)$ is a metrizable space, given $\bu \in \limsup_{\nu \to 0} S_\nu(K),$ there exists a sequence $\{\bu^{\nu_j}\}_j$ such that $\nu_j \in (0,\nu_0]$ and $\bu^{\nu_j} \in S_{\nu_j}(K)$
for all $j \in \mathbb{N}$, with $\nu_j \to 0$ and $\bu^{\nu_j} \to \bu$ in $\mY_I (R)$ as $j \to \infty$. Moreover, since the metric in $\mY_I(R)$ is compatible
with the topology in $\mC_{\rloc}(I,\Wrw)$, we also have that $\bu^{\nu_j} \to \bu$ in $\mC_{\rloc}(I,\Wrw)$ as $j \to \infty$. By \cref{thmNStoEuler},
this implies that $\bu \in \mU_I$, as desired.
\end{proof}

Having verified all the required assumptions, we may now apply \cref{Main2alt} with the choices of $X$, $\mU$,  $\{S_\ve\}_{\ve\in \mathcal E}$, and $\{\mu_\ve\}_{\ve\in \mathcal E}$ fixed in the beginning of this section and obtain the following result on the convergence of trajectory statistical solutions of the 2D Navier-Stokes equations to a trajectory statistical solution of the 2D Euler equations in the inviscid limit. 

\begin{theorem}\label{MainThmApp2D} Let $I \subset \mathbb R$ be an interval closed and bounded on the left with left endpoint $t_0$ and assume $\f\in L_{\rloc}^2(I, V').$ Fix $r \in [2,\infty)$, and let $S_\nu: \Wrw \to \Cloc(I,\Wrw)$, $\nu > 0$, and $\mU_I \subset \Cloc(I,\Wrw)$ be defined as in \eqref{def:Snu} and \eqref{ue2d}, respectively. Then, given a Borel probability measure $\mu_0$ on $\Wr$, the net $\{S_\nu\mu_0\}_{\nu > 0}$ has a subnet that converges as $\nu \to 0$,  with respect to the weak-star semicontinuity topology, to a $\mU_I$-trajectory statistical solution $\rho$ of the 2D Euler equations that satisfies $\Pi_{t_0}\rho=\mu_0$. 
\end{theorem}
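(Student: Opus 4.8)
The plan is to apply \cref{Main2alt} directly, since the entire preceding development of \cref{subsec:2D:NSE:Euler} has been arranged precisely so as to verify its hypotheses. First I would fix the abstract data in the statement of \cref{Main2alt}: take $X = \Wrw$, the net of solution operators $\{S_\nu\}_{\nu \in (0,\nu_0]}$ from \eqref{def:Snu} indexed by the directed set $(0,\nu_0]$ under the reverse order (so that the net limit coincides with the inviscid limit $\nu \to 0$), the target set $\mU = \mU_I$ from \eqref{ue2d}, and the approximating measures $\mu_\nu = \mu_0$ for every $\nu$. Since $P_\nu = \Pi_{t_0}S_\nu$ is the identity operator and $\mu_\nu = \mu_0$, hypothesis \ref{Main2:H0} holds immediately with limit measure $\mu_0$.

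Next I would produce the compact exhausting sets $\{K_n\}_{n\in\NN}$. Because $\Wr$ is a separable Banach space, $\mu_0$ is automatically tight on $\Wr$, and its compact carriers are again compact in $\Wrw$; this yields a sequence $\{K_n\}$ of compact sets in $\Wrw$ with $\mu_0(X \setminus K_n) \to 0$, which is exactly \ref{Main2:H2}. The remaining three hypotheses are then precisely the content of the propositions already established: \cref{contSnu} gives the continuity of each restriction $S_\nu|_{K_n}$, which is \ref{Main2:H1}; \cref{SnuKinmK} supplies, for each $n$, a compact set $\mK_n \subset \Cloc(I,\Wrw)$ with $S_\nu(K_n)\subset\mK_n$ for all $\nu\in(0,\nu_0]$, which is \ref{Main2:H3}; and \cref{cor:conditions2DNS} gives $\limsup_{\nu\to 0}S_\nu(K_n)\subset\mU_I$, which is \ref{Main2:H4}. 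Invoking \cref{Main2alt} then delivers both that each $\rho_\nu = S_\nu\mu_0$ is a $\mU_\nu$-trajectory statistical solution and, more importantly, the desired $\mU_I$-trajectory statistical solution $\rho$ obtained as a weak-star semicontinuity limit of a subnet of $\{S_\nu\mu_0\}_\nu$, with $\Pi_{t_0}\rho=\mu_0$.

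There is no genuine obstacle at this stage: all the analytic difficulty — the a priori estimates of \cref{propestapriori}, the compactness of the auxiliary space $\mY_I(R)$ in \cref{YJRcompact}, and the inviscid-limit passage for individual solutions in \cref{thmNStoEuler} — has been absorbed into the earlier propositions. The only points requiring mild care are bookkeeping in nature: confirming that $\mu_0$, a priori only a Borel measure on $\Wr$, is genuinely tight on the abstract space $X = \Wrw$ (which is where separability of $\Wr$ is used), and checking that the reverse-ordered index set $(0,\nu_0]$ makes the $\limsup_{\nu\to 0}$ appearing in \cref{cor:conditions2DNS} coincide with the topological superior limit demanded in \ref{Main2:H4}. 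The particular value of $\nu_0$ is immaterial, since only the behavior of the net as $\nu\to 0$ enters the conclusion.
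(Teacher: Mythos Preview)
Your proposal is correct and follows exactly the approach of the paper: the theorem is stated after the hypotheses \ref{Main2:H0}--\ref{Main2:H4} of \cref{Main2alt} have been checked one by one via the surrounding discussion and \cref{contSnu}, \cref{SnuKinmK}, and \cref{cor:conditions2DNS}, so the proof is simply a direct invocation of \cref{Main2alt} with the choices you list.
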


Clearly, \cref{MainThmApp2D} thus yields the existence of a $\mU_I$-trajectory statistical solution of the 2D Euler equations satisfying a given initial datum. To emphasize this fact, we state it as the following corollary.

\begin{cor}
Let $I \subset \mathbb R$ be an interval closed and bounded on the left with left endpoint $t_0$ and assume $\f\in L_{\rloc}^2(I, V').$ Fix $r \in [2,\infty)$, and let $\mU_I \subset \Cloc(I,\Wrw)$ be as defined in \eqref{ue2d}. Then, given a Borel probability measure $\mu_0$ on $\Wr$, there exists a $\mU_I$-trajectory statistical solution $\rho$ of the 2D Euler equations satisfying the initial condition $\Pi_{t_0}\rho=\mu_0$.
\end{cor}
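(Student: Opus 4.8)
The plan is to obtain this existence statement as an immediate consequence of \cref{MainThmApp2D}, which has just been established. The corollary is nothing more than the existence assertion extracted from that theorem: \cref{MainThmApp2D} not only asserts convergence of the net $\{S_\nu\mu_0\}_{\nu>0}$ along a subnet but, crucially, identifies the limit as a $\mU_I$-trajectory statistical solution $\rho$ with $\Pi_{t_0}\rho=\mu_0$. Thus the object whose existence is claimed here is literally produced by the theorem.

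Concretely, I would first fix the Borel probability measure $\mu_0$ on $\Wr$ as in the hypothesis, and set $X=\Wrw$, together with the solution operators $S_\nu$ from \eqref{def:Snu} and the set $\mU_I$ from \eqref{ue2d}, exactly as in the statement of \cref{MainThmApp2D}. Since $\f\in L^2_{\rloc}(I,V')$ and $r\in[2,\infty)$ are precisely the standing assumptions of that theorem, all its hypotheses are in force. I would then invoke \cref{MainThmApp2D} directly: it guarantees that the net $\{S_\nu\mu_0\}_{\nu>0}$ admits a subnet converging, in the weak-star semicontinuity topology, to a $\mU_I$-trajectory statistical solution $\rho$ of the 2D Euler equations satisfying $\Pi_{t_0}\rho=\mu_0$. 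This $\rho$ is exactly the measure required by the corollary.

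In short, the proof reduces to the single sentence: by \cref{MainThmApp2D} such a $\rho$ exists. There is no genuine obstacle here, since the corollary is a direct specialization of the theorem that discards the convergence information and retains only the existence conclusion. The only point worth making explicit is that \cref{MainThmApp2D} does not merely assert that \emph{some} limit exists, but that the limit is a bona fide $\mU_I$-trajectory statistical solution matching the prescribed initial measure $\mu_0$; this is what makes the corollary follow at once, with no further verification of the defining properties of a trajectory statistical solution (tightness and being carried by a Borel subset of $\mU_I$), as these were already secured in the proof of \cref{MainThmApp2D} via the application of \cref{Main2alt}.
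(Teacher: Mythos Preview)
Your proposal is correct and matches the paper's approach exactly: the paper does not give a separate proof of this corollary, merely remarking that it follows directly from \cref{MainThmApp2D}. Your observation that the corollary simply extracts the existence assertion from the theorem, discarding the convergence information, is precisely the point.
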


\subsection{Convergence of statistical solutions of 3D Navier-Stokes to 3D Euler} \label{subsec:3D:NSE:Euler}

In this section, we show the existence of a trajectory statistical solution of the 3D Euler equations starting from any given initial measure $\mu_0$ on $H$. This is done by considering, for each $\nu > 0$, a trajectory statistical solution $\rho_\nu$ of the 3D NSE starting from $\mu_0$, and constructing a suitable family of compact sets for which the assumptions of \cref{Main1} above are verified.

Here we consider the setting of \cref{Main1} with the following choices. Let $X = H_\rw$, and let $I\subset \mathbb R$ be an interval closed and bounded on the left with left endpoint $t_0$. Moreover, for a fixed forcing term $\f\in L^2_{\rloc}(I,H)$, we define the following corresponding sets of solutions of the 3D $\nu$-Navier-Stokes equations, with $\nu > 0$, and 3D Euler equations, in $\mX = \Cloc(I,H_\rw)$:
\begin{eqnarray}\label{def:U:nse:3d}
	\mU^\nu_I=\left\{\begin{array}{cc}\bunu \in \Cloc(I,H_\rw):& \bunu \mbox{ is a weak solution of the 3D $\nu$-Navier-Stokes}\\
		&\mbox{ equations \eqref{nse} on  $I$ in the sense of \cref{3NSweak}}
	\end{array}\right\},
\end{eqnarray}
and
\begin{eqnarray}\label{def:U:Euler:3d}
	\UE_I
	=\left\{\begin{array}{cc} \bu\in \Cloc(I,H_\rw): & \bu \mbox{ is a dissipative solution of the 3D Euler equations}\\
		&  \eqref{ee} \mbox{ on $I$ in the sense of \cref{Edissipative}}\end{array}\right\}.
\end{eqnarray}

As in \cref{subsec:2D:NSE:Euler}, we also define an auxiliary space in view of the a priori estimates from \cref{prop:apriori:est:3D}. Namely, for fixed $\nu_0 > 0$, $R > 0$, and any subinterval $J \subseteq I$ that is closed and bounded on the left with left endpoint $t_0$, consider the following inequalities for $\bu \in \Cloc(J,H_\rw)$:
\begin{align}\label{ineq:YI:1}
	 |\bu(t)|^2 \leq \left(R^2 + \frac{1}{\nu_0 \lambda_1} \|\f\|_{L^2(t_0, t; H)}^2 \right) e^{\nu_0 \lambda_1 (t-t_0)},
\end{align}
for $t \in J$, and
\begin{align}\label{ineq:YI:2}
	\| \bu(t) - \bu(s)\|_{\Winfp} 
	\leq c |t-s|^{1/2} \nu_0^{1/2} \lambda_1^{-3/4}  e^{\frac{\nu_0 \lambda_1(t - t_0)}{2}}  \left[ R^2 + \frac{1}{\nu_0 \lambda_1}  \|\f\|_{L^2(t_0,t;H)}^2  \right]^{1/2} \notag \\
	+ |t - s| e^{\nu_0 \lambda_1(t - t_0)}  \left[ R^2 + \frac{1}{\nu_0 \lambda_1}  \|\f\|_{L^2(t_0,t;H)}^2  \right],  
\end{align}
for $s,t \in J$ with $s < t$, where $c > 0$ is a fixed universal constant. Then, we define
\begin{align}\label{def:YIR:3D}
	\mY_{J}(R) =  \left\{ \bu \in \Cloc(J,H_\rw) : \bu \mbox{ satisfies } \eqref{ineq:YI:1} \mbox{ for all $t \in J$ and } \eqref{ineq:YI:2} \mbox{ for all $s < t$ in $J$} \right\}.
\end{align}

As in the two-dimensional case, note that for all $0 < \nu \leq \nu_0$ and for every initial datum in $B_{H}(R)$, the restriction to $J$ of the corresponding weak solution of the 3D $\nu$-NSE belongs to $\mY_{J}(R)$.

From the definition \eqref{def:YIR:3D}, it follows that for any nondecreasing sequence of compact subintervals $J_n \subset I$, $n \in \NN$, each with left endpoint $t_0$ and such that $I = \bigcup_n J_n$, we may write
\begin{align}\label{eq:YI:R:Jn}
	\mY_I(R) =  \bigcap_{n=1}^\infty \Pi_{J_n}^{-1} \mY_{J_n}(R),
\end{align}
where we recall from \eqref{def:restriction:op} that $\Pi_{J_n}$ denotes the operator that takes any function $\bu \in \Cloc(I,H_\rw)$ to its restriction to $J_n$, namely $\Pi_{J_n}\bu = \bu|_{J_n} \in \mC(J_n, H_\rw)$.

We have the following compactness result.

\begin{lemma}\label{3YJRcompact}
	Let $R> 0$ and $\nu_0 > 0$. Then, for every compact subinterval $J \subset I$ with left endpoint $t_0$, $\mY_J(R)$ is a compact subset of $\mC(J,H_\rw)$. Consequently, $\mY_I(R)$ is a compact subset of $\mC_{\rloc}(I,H_\rw)$.
\end{lemma}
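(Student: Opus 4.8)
The plan is to mirror the argument used for the two-dimensional analogue, \cref{YJRcompact}. First I would reduce the compactness of $\mY_J(R)$ to sequential compactness by exhibiting a metrizable topology. From inequality \eqref{ineq:YI:1} and the compactness of $J$, every $\bu \in \mY_J(R)$ satisfies $|\bu(t)| \leq R_J$ for all $t \in J$, for a constant $R_J > 0$ depending only on $R$, $\nu_0$, $\|\f\|_{L^2(t_0,t;H)}$ and $J$. Hence $\mY_J(R) \subset \mC(J, B_H(R_J)_\rw)$; since $H$ is separable, the weak topology is metrizable on the bounded set $B_H(R_J)$, and because $J$ is compact the topology of uniform convergence on $\mC(J, B_H(R_J)_\rw)$ is metrizable as well. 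It therefore suffices to prove sequential compactness.

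Next I would take an arbitrary sequence $\{\bu_n\}_n \subset \mY_J(R)$ and extract a convergent subsequence via the Arzel\`a--Ascoli theorem for maps into the compact metric space $B_H(R_J)_\rw$. Pointwise relative compactness is automatic, so the crux is equicontinuity in $\mC(J, H_\rw)$. The hard part is precisely here: inequality \eqref{ineq:YI:2} only provides equicontinuity in the $\Winfp$-norm, and this must be upgraded to equicontinuity in the weak topology of $H$. For this I would use that $\Winf$ is dense in $H$ (both being closures of $\Tsp$, in the $W^{1,\infty}$- and $L^2$-norms respectively): given $\bw \in H$ and $\ve > 0$, choose $\bv \in \Winf$ with $\|\bw - \bv\|_{H} < \ve/(4R_J)$ and split
\[
|(\bu_n(t) - \bu_n(s), \bw)| \leq |(\bu_n(t) - \bu_n(s), \bw - \bv)| + |\langle \bu_n(t) - \bu_n(s), \bv \rangle_{\Winfp, \Winf}|,
\]
bounding the first term by $2R_J \|\bw - \bv\|_{H} < \ve/2$ and the second, via \eqref{ineq:YI:2}, by a quantity that tends to $0$ as $|t - s| \to 0$, uniformly in $n$. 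This yields the desired equicontinuity, and Arzel\`a--Ascoli produces a subsequence $\{\bu_{n'}\}$ converging to some $\bu$ in $\mC(J, H_\rw)$.

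It then remains to check $\bu \in \mY_J(R)$, i.e. that $\bu$ satisfies \eqref{ineq:YI:1} and \eqref{ineq:YI:2}; both follow by passing to the weak limit. For fixed $t$, weak lower semicontinuity of the $H$-norm gives $|\bu(t)| \leq \liminf_{n'} |\bu_{n'}(t)|$, whence \eqref{ineq:YI:1}. For fixed $s < t$ and any $\bv \in \Winf$ with $\|\bv\|_{\Winf} \leq 1$, the convergence $\bu_{n'}(t) - \bu_{n'}(s) \rightharpoonup \bu(t) - \bu(s)$ in $H$ gives $\langle \bu(t) - \bu(s), \bv \rangle_{\Winfp,\Winf} = \lim_{n'} (\bu_{n'}(t) - \bu_{n'}(s), \bv)$, so taking the supremum over such $\bv$ and using \eqref{ineq:YI:2} yields the bound on $\|\bu(t) - \bu(s)\|_{\Winfp}$. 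Thus $\mY_J(R)$ is sequentially compact, hence compact.

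Finally, for the statement about $\mY_I(R)$, I would fix a nondecreasing sequence of compact subintervals $J_n \subset I$ with left endpoint $t_0$ and $I = \bigcup_n J_n$, and invoke \eqref{eq:YI:R:Jn}. Given a sequence in $\mY_I(R)$, a standard diagonal extraction, using the compactness of each $\mY_{J_n}(R)$ in $\mC(J_n, H_\rw)$ to pass to successively finer subsequences converging on each $J_n$, produces a subsequence converging in $\Cloc(I, H_\rw)$ whose restriction to each $J_n$ lies in $\mY_{J_n}(R)$; hence the limit belongs to $\bigcap_n \Pi_{J_n}^{-1}\mY_{J_n}(R) = \mY_I(R)$. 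Since $\mY_I(R)$ is again metrizable in the subspace topology, sequential compactness gives compactness. Apart from the equicontinuity upgrade in the second paragraph, everything is a routine adaptation of the two-dimensional proof.
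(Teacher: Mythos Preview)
Your proposal is correct and follows essentially the same approach as the paper: metrizability via the uniform bound $|\bu(t)|\leq R_J$, equicontinuity in $H_\rw$ obtained by approximating test elements in $H$ by elements of $\Winf$ and invoking \eqref{ineq:YI:2}, Arzel\`a--Ascoli, closure of $\mY_J(R)$ under weak limits, and a diagonal extraction for $\mY_I(R)$. The only cosmetic difference is the labeling of the approximating test function (your $\bv$ versus the paper's $\bw$).
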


\begin{proof}
Let $J \subset I$ be a compact subinterval with left endpoint $t_0$. Denote
\begin{align*}
	R_J = \left(R^2 + \frac{1}{\nu_0 \lambda_1} \|\f\|_{L^2(J; H)}^2 \right)^{1/2} e^{\frac{\nu_0 \lambda_1}{2} |J| }.
\end{align*}
According to the definition in \eqref{def:YIR:3D}, it follows that every $\bu \in \mY_J(R)$ satisfies:
\begin{align}\label{ineq:u:RJ}
	|\bu(t)| \leq R_J, \quad \mbox{ for all } t \in J,
\end{align}
and 
\begin{align}\label{ineq:diff:u:RJ}
	\|\bu(t) - \bu(s) \|_{\Winfp} &\leq c |t-s|^{1/2} \nu_0^{1/2} \lambda_1^{-3/4} R_J + |t-s| R_J^2  \notag \\
	&\leq C |t-s|^{1/2}	\quad \mbox{ for all } s,t \in J.
\end{align}
In particular, \eqref{ineq:u:RJ} implies that $\mY_J(R) \subset  \mC(J,B_{H}(R_J)_\rw)$. Thus, $\mY_J(R)$ is metrizable, and it suffices to show that $\mY_J(R)$ is sequentially compact. 

Let $\{\bu_n\}_n$ be a sequence in $\mY_J(R)$. We first show that $\{\bu_n\}_n$ is equicontinuous in $\mC(J,B_{H}(R_J)_\rw)$. Let $\bv \in H$ and $\varepsilon > 0$ be arbitrary. Since $\Winf$ is dense in $H$, we may take $\bw \in \Winf$ such that $|\bv  - \bw| < \varepsilon/(4 R_J)$. Thus, together with \eqref{ineq:u:RJ} and \eqref{ineq:diff:u:RJ}, we obtain that, for any $s,t \in J,$
\begin{align*}
	|(\bu_n(t) - \bu_n(s), \bv)| 
	&\leq  |(\bu_n(t) - \bu_n(s), \bv - \bw)| + |\langle \bu_n(t) - \bu_n(s), \bw \rangle_{\Winfp,\Winf}| \\
	&\leq 2 R_J |\bv - \bw| + \| \bu_n(t) - \bu_n(s) \|_{\Winfp} \|\bw \|_{\Winf} \\
	&< \frac{\varepsilon}{2}+ C |t-s|^{1/2} \|\bw \|_{\Winf},
\end{align*} 
so that, if $|t-s| < \varepsilon^2 / (2C \| \bw\|_{\Winf})^2,$ then
\begin{align*}
	|(\bu_n(t) - \bu_n(s), \bv)| < \varepsilon, \quad \mbox{ for all }n.
\end{align*}
Since $\bv$ and $\varepsilon$ are arbitrary, we deduce that $\{\bu_n\}_n$ is equicontinuous in $\mC(J,B_{H}(R_J)_\rw)$. Moreover, for each fixed $t \in J$, $\{\bu_n(t)\}_n \subset B_H(R_J)$, and thus $\{\bu_n(t)\}_n $ is relatively compact in $B_H(R_J)_\rw$. Therefore, by the Arzel\`a-Ascoli theorem, it follows that $\{\bu_n\}_n$ is relatively compact in $\mC(J,B_{H}(R_J)_\rw)$, and hence in $\mC(J,H_\rw)$.

Thus, there exists a subsequence $\{\bu_{n'}\}_{n'}$ of $\{\bu_{n}\}_n$ and $\bu \in \mC(J,H_\rw)$ such that $\bu_{n'} \to \bu$ in $\mC(J,H_\rw)$. It only remains to show that $\bu \in \mY_J(R)$. The inequality \eqref{ineq:YI:1} follows immediately from the weak convergence $\bu_{n'}(t) \to \bu(t)$ in $H_\rw$. Moreover, denoting the right-hand side of \eqref{ineq:YI:2} by $K(s,t)$, it follows that for every $\bw \in \Winf$ such that $\|\bw\|_{\Winf} \leq 1$ and for all $s,t \in J$ with $s < t$, we have
\begin{multline*}
	|\langle \bu(t) - \bu(s), \bw \rangle_{\Winfp, \Winf} |  
	= | (\bu(t) - \bu(s), \bw ) | \\
	= \lim_{n' \to \infty}  | (\bu_{n'}(t) - \bu_{n'}(s), \bw ) | 
	= \lim_{n' \to \infty} |\langle \bu_{n'}(t) - \bu_{n'}(s), \bw \rangle_{\Winfp, \Winf} | 
	\leq K(s,t).
\end{multline*}
This implies that 
\begin{align*}
	\| \bu(t) - \bu(s) \|_{\Winfp} \leq K(s,t),
\end{align*}
and hence $\bu$ satisfies \eqref{ineq:YI:2}. Consequently, $\bu \in \mY_J(R)$, as desired.

The second part of the statement follows from the characterization in \eqref{eq:YI:R:Jn}, and the fact that each $\mY_{J_n}(R)$ is a compact set in $\mC(J_n,H_\rw)$. By a diagonalization argument, we deduce that $\mY_I(R)$ is compact in $\mC_{\rloc}(I,H_\rw)$. This concludes the proof.
\end{proof}
	
We now obtain the following result as an application of \cref{Main1}. It shows convergence of a vanishing viscosity net of $\mU_I^\nu$-trajectory statistical solutions of the 3D NSE towards a $\UE_I$-trajectory statistical solution of the 3D Euler equations. 

\begin{theorem}\label{MainThmApp3D}
	Let $I \subset \RR$ be an interval closed and bounded on the left with left endpoint $t_0$ and assume that $\f\in L^2_{\rloc}(I,H).$ Consider $\mU^\nu_I, \UE_I \subset \Cloc(I,H_\rw)$, $\nu > 0$, as defined in \eqref{def:U:nse:3d} and \eqref{def:U:Euler:3d}, respectively. Also, let $\mu_0$ be a Borel probability measure on $H$, and, for each $\nu > 0$, let $\rho_\nu$ be a $\mU^\nu_I$-trajectory statistical solution of the 3D $\nu$-NSE such that $\Pi_{t_0} \rho_\nu = \mu_0$. Then, there exists a subnet of $\{\rho_\nu\}_{\nu > 0}$ that converges as $\nu \to 0$, with respect to the weak-star semicontinuity topology, to a $\UE_I$-trajectory statistical solution $\rho$ of the 3D Euler equations satisfying $\Pi_{t_0}\rho = \mu_0$.
\end{theorem}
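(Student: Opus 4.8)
The plan is to apply \cref{Main1} with $X=H_\rw$, $\mX=\Cloc(I,H_\rw)$, approximating sets $\mU_\nu=\mU^\nu_I$, limit set $\mU=\UE_I$, and the net $\{\rho_\nu\}$ indexed by $\mE=(0,\nu_0]$ for a fixed but arbitrary $\nu_0>0$, directed so that $\nu\to 0$ is the terminal direction (i.e. $\nu\preccurlyeq\nu'$ iff $\nu\geq\nu'$). It then suffices to produce a sequence of compact sets $\{\mK_n\}$ in $\mX$ verifying \ref{i:Main1} and \ref{ii:Main1}. The initial-condition clause will follow from the second part of \cref{Main1}: since $\Pi_{t_0}\rho_\nu=\mu_0$ is a constant (hence convergent) net, $\Pi_{t_0}\rho_\nu\wconv\mu_0$, and $\mu_0$ is a tight Borel probability measure on $H_\rw$ (it is tight on the Polish space $H$, strongly compact sets are weakly compact, and $\mB_H=\mB_{H_\rw}$ by separability).

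To build the $\mK_n$ I would use the tightness of $\mu_0$ on $H$ to choose strongly compact sets $K_n\subset H$ with $\mu_0(H\setminus K_n)<\delta_n$, $\delta_n\to 0$, and radii $R_n$ with $K_n\subset B_H(R_n)$. I then set $\mK_n=\mY_I(R_n)\cap\Pi_{t_0}^{-1}(K_n)$, with $\mY_I(R_n)$ the auxiliary set from \eqref{def:YIR:3D}. This set is compact: $\mY_I(R_n)$ is compact in $\mX$ by \cref{3YJRcompact}, while $\Pi_{t_0}^{-1}(K_n)$ is closed because $K_n$, being strongly compact, is weakly compact and hence weakly closed, and $\Pi_{t_0}$ is continuous into $H_\rw$; the intersection of a compact and a closed set is compact. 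Moreover $\mK_n$ inherits metrizability from $\mY_I(R_n)$.

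For \ref{i:Main1}, recall each $\rho_\nu$ is carried by a Borel set $\mV_\nu\subset\mU^\nu_I$. For $\nu\leq\nu_0$, any Leray--Hopf weak solution $\bu\in\mU^\nu_I$ with $\bu(t_0)\in B_H(R_n)$ belongs to $\mY_I(R_n)$ by the a priori estimates \eqref{apriori:est:1}--\eqref{apriori:est:2} (as noted after \eqref{def:YIR:3D}); hence if $\bu\in\mV_\nu$ with $\bu(t_0)\in K_n$ then $\bu\in\mK_n$. Thus $\mV_\nu\setminus\mK_n\subset\mX\setminus\Pi_{t_0}^{-1}(K_n)$, and using $\rho_\nu(\mX\setminus\mV_\nu)=0$ together with $\Pi_{t_0}\rho_\nu=\mu_0$,
\[
\rho_\nu(\mX\setminus\mK_n)\leq\rho_\nu\big(\mX\setminus\Pi_{t_0}^{-1}(K_n)\big)=\mu_0(H\setminus K_n)<\delta_n,\qquad\text{for all }\nu\in\mE .
\]
For \ref{ii:Main1}, take $\bu\in\limsup_{\nu}(\mV_\nu\cap\mK_n)$. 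Since each $\mV_\nu\cap\mK_n$ lies in the compact metrizable set $\mK_n$, the characterization \eqref{def:limsup} lets me extract $\nu_j\to 0$ and $\bu^{\nu_j}\in\mV_{\nu_j}\cap\mK_n$ with $\bu^{\nu_j}\to\bu$ in $\mX$. Each $\bu^{\nu_j}$ is a Leray--Hopf weak solution of the $\nu_j$-NSE with $\bu^{\nu_j}(t_0)\in K_n$, and $K_n$ is strongly compact, so \cref{thm3dNStoEuler} applies and gives $\bu\in\UE_I$. Hence $\limsup_{\nu}(\mV_\nu\cap\mK_n)\subset\UE_I=\mU$, and \cref{Main1} yields the convergent subnet, the limit $\UE_I$-trajectory statistical solution $\rho$, and $\Pi_{t_0}\rho=\mu_0$.

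The main obstacle is the interface between the topological superior limit and the hypothesis of \cref{thm3dNStoEuler}: that result requires the approximating initial data to lie in a fixed \emph{strongly} compact subset of $H$, whereas the a priori estimates by themselves only control bounded (merely weakly compact) sets. This is exactly why $\mK_n$ must be cut down by $\Pi_{t_0}^{-1}(K_n)$ with $K_n$ strongly compact, and why the metrizability of $\mK_n$ is essential to convert the net-theoretic $\limsup$ into a genuine vanishing-viscosity sequence on which \cref{thm3dNStoEuler} can be invoked.
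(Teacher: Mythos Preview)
Your proposal is correct and follows essentially the same approach as the paper: the same choice $\mK_n=\mY_I(R_n)\cap\Pi_{t_0}^{-1}(K_n)$ with $K_n$ strongly compact in $H$, the same verification of \ref{i:Main1} via the a~priori inclusion $\mV_\nu\cap\Pi_{t_0}^{-1}K_n\subset\mK_n$ and $\Pi_{t_0}\rho_\nu=\mu_0$, and the same verification of \ref{ii:Main1} by extracting a sequence in the metrizable set $\mK_n$ and invoking \cref{thm3dNStoEuler}. Your closing remark on why strong compactness of $K_n$ is essential is a welcome clarification that the paper leaves implicit.
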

\begin{proof}
As in the beginning of this subsection, let us denote $X = H_\rw$ and $\mX = \Cloc(I,H_\rw)$. We proceed by constructing a sequence of compact sets $\{\mK_n\}_{n \in \NN}$ in $\mX$ which satisfies the assumptions of \cref{Main1}. First, since $H$ is a separable Banach space then, as recalled in \cref{subsec:meas:theory}, it follows that $\mu_0$ is tight. Moreover, the Borel $\sigma$-algebras in $H$ and $H_\rw$ coincide. 
Thus, given any sequence of positive real numbers $\{\delta_n\}_{n \in \NN}$ with $\delta_n \to 0$, there exists a corresponding sequence of (strongly) compact sets $\{K_n\}_{n \in \NN}$ in $H$ such that
\begin{align}\label{mu0:Kn}
	\mu_0(X \backslash K_n) < \delta_n, \quad \mbox{for all } n.
\end{align}

For each $n$, let $R_n > 0$ such that $K_n \subset B_H(R_n)$, and define 
\begin{align*}
	\mK_n = \mY_I(R_n) \cap \Pi_{t_0}^{-1} K_n,
\end{align*}
with $\mY_I(R_n)$ as defined in \eqref{def:YIR:3D}, for fixed $\nu_0 > 0$. From \cref{3YJRcompact}, $\mY_I(R_n)$ is a compact set in $\mX$. Moreover, since $K_n$ is compact in $H$, hence also compact (and closed) in $X = H_\rw$, and $\Pi_{t_0}: \mX \to X$ is a continuous operator, then $\Pi_{t_0}^{-1} K_n$ is a closed set in $\mX$. This implies that each $\mK_n$ is a compact set in $\mX$.

Let us verify that the sequence $\{\mK_n\}_n$ satisfies condition \ref{i:Main1} of \cref{Main1}. From \cref{def-stat-sol}, for each $\nu > 0$ there exists a Borel set $\mV_\nu$ in $\mX$ such that $\mV_\nu \subset \mU_I^\nu$ and $\rho_\nu(\mX \backslash \mV_\nu) = 0$\footnote{In fact, since, as shown in \cite[Proposition 2.12]{FRT2013}, $\mU_I^\nu$ is itself a Borel set in $\mX$, then we could take $\mV_\nu = \mU_I^\nu$.}. Moreover, from \cref{prop:apriori:est:3D} and the definition of $\mY_I(R_n)$ in \eqref{def:YIR:3D} and the fact that $\mU_I^\nu \subset \mY_I(R_n)$ when $\nu \leq \nu_0$, it follows that
\begin{align*}
	\mV_\nu \cap \Pi_{t_0}^{-1} K_n  \subset \mU_I^\nu \cap \Pi_{t_0}^{-1} K_n \subset \mY_I(R_n) \cap \Pi_{t_0}^{-1} K_n = \mK_n, \quad \mbox{for all } 0 < \nu \leq \nu_0.
\end{align*}
With these two facts, we obtain that, for all $0< \nu \leq \nu_0,$
\begin{align*}
	\rho_\nu(\mX \backslash \mK_n) 
	&\leq \rho_\nu (\mX \backslash ( \mV_\nu \cap \Pi_{t_0}^{-1} K_n )) \\
	&\quad= \rho_\nu (\mX \backslash \Pi_{t_0}^{-1} K_n)
	= \rho_\nu (\Pi_{t_0}^{-1} (X \backslash K_n))
	= \Pi_{t_0} \rho_\nu (X \backslash K_n)
	= \mu_0 (X \backslash K_n).
\end{align*}
Thus, together with \eqref{mu0:Kn}, we deduce that
\begin{align*}
	\rho_\nu(\mX \backslash \mK_n) < \delta_n, \quad \mbox{for all } n \,\mbox{ and } \, 0 < \nu \leq \nu_0,
\end{align*}
as desired.

Regarding condition \ref{ii:Main1} of \cref{Main1}, first note that
\begin{align*}
	\mV_\nu \cap \mK_n  \subset \mU_I^\nu \cap \mK_n  \subset \Cloc(I, B_H(R_n)_\rw)
\end{align*}
for all $\nu > 0$ and $n \in \NN$. Thus,
\begin{align*}
	\limsup_{\nu \to 0} (\mV_\nu \cap \mK_n)  \subset \Cloc(I, B_H(R_n)_\rw), \quad \mbox{for all } n.
\end{align*}
Since $\Cloc(I, B_H(R_n)_\rw)$ is metrizable, given $\bu \in \limsup_{\nu \to 0} (\mU_I^\nu \cap \mK_n)$ there exists a sequence $\{\bu_{\nu_j}\}_{j \in \NN}$ such that $\bu_{\nu_j} \in \mV_\nu \cap \mK_n \subset \mU_I^\nu \cap \Pi_{t_0}^{-1} K_n$ for all $j \in \NN$, with $\nu_j \to 0$ and $\bu_{\nu_j} \to \bu$ in $\Cloc(I,H_\rw)$ as $j \to \infty$. By \cref{thm3dNStoEuler}, this implies that $\bu \in \mU^{\diss}_I$. Hence, condition \ref{ii:Main1} of \cref{Main1} is satisfied. The conclusion now follows as an application of \cref{Main1}.
\end{proof}

Given any Borel probability measure $\mu_0$ on $H$ and $\nu > 0$, existence of a $\mU_I^\nu$-trajectory statistical solution $\rho_\nu$ of the 3D NSE in the sense of \cref{def-stat-sol} satisfying the initial condition $\Pi_{t_0}\rho_\nu = \mu_0$ is shown in \cite[Theorem 4.2]{BMR2016} (see also \cite{FRT2013}). This fact together with \cref{MainThmApp3D} thus yields the following corollary on the existence of $\UE_I$-trajectory statistical solutions of the 3D Euler equations satisfying a given initial datum.

\begin{cor}
	Let $I \subset \mathbb R$ be an interval closed and bounded on the left with left endpoint $t_0$ and assume that $\f\in L^2_{\rloc}(I,H).$ Let $\UE_I \subset \Cloc(I,H_\rw)$ be as defined in \eqref{def:U:Euler:3d}. Then, given a Borel probability measure $\mu_0$ on $H$, there exists a $\UE_I$-trajectory statistical solution $\rho$ of the 3D Euler equations satisfying the initial condition $\Pi_{t_0}\rho=\mu_0$.
\end{cor}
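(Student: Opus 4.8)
The plan is to obtain the corollary as an immediate combination of the inviscid convergence result in \cref{MainThmApp3D} with a known existence result for trajectory statistical solutions of the viscous problem. The only real content has already been carried out in the proof of \cref{MainThmApp3D}, so here it remains only to feed that theorem an appropriate approximating net $\{\rho_\nu\}_{\nu>0}$.

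First I would fix the approximating family. For each $\nu>0$, the viscous system is the 3D $\nu$-Navier-Stokes equations, and by the existence theorem for trajectory statistical solutions of the 3D NSE established in \cite[Theorem 4.2]{BMR2016} (see also \cite{FRT2013}), there exists a $\mU_I^\nu$-trajectory statistical solution $\rho_\nu$, in the sense of \cref{def-stat-sol}, satisfying the prescribed initial condition $\Pi_{t_0}\rho_\nu=\mu_0$. This step uses precisely the standing hypotheses of the corollary, namely that $\mu_0$ is a Borel probability measure on $H$ and that $\f\in L^2_{\rloc}(I,H)$, which match the assumptions required by that existence result.

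Second, I would simply apply \cref{MainThmApp3D} to the net $\{\rho_\nu\}_{\nu>0}$ constructed above. Since each $\rho_\nu$ is a $\mU_I^\nu$-trajectory statistical solution with $\Pi_{t_0}\rho_\nu=\mu_0$, the hypotheses of \cref{MainThmApp3D} are met verbatim, and that theorem produces a subnet converging, in the weak-star semicontinuity topology, to a $\UE_I$-trajectory statistical solution $\rho$ of the 3D Euler equations, with $\UE_I$ as in \eqref{def:U:Euler:3d}, satisfying $\Pi_{t_0}\rho=\mu_0$. This $\rho$ is exactly the object asserted to exist, completing the argument.

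There is essentially no genuine obstacle left at this stage: all the analytic work—the uniform a priori estimates of \cref{prop:apriori:est:3D}, the compactness of the auxiliary sets $\mY_I(R)$ from \cref{3YJRcompact}, the individual vanishing-viscosity limit toward dissipative Euler solutions in \cref{thm3dNStoEuler}, and the abstract localization of the carrier in \cref{Main1}—is already encapsulated in \cref{MainThmApp3D}. The one point worth confirming is the compatibility of hypotheses between the cited viscous existence theorem and \cref{MainThmApp3D}, in particular that the same regularity class $\f\in L^2_{\rloc}(I,H)$ and the same measure-theoretic setting on $H$ are used throughout; once this is checked, the corollary follows directly.
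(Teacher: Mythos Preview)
Your proposal is correct and follows exactly the paper's own reasoning: the corollary is stated as an immediate consequence of \cref{MainThmApp3D} together with the existence of $\mU_I^\nu$-trajectory statistical solutions from \cite[Theorem 4.2]{BMR2016} (see also \cite{FRT2013}). The paper does not even give a separate proof, simply noting that these two ingredients combine to yield the result.
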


\subsection{Convergence of statistical solutions of the Galerkin approximations of the 3D NSE}
\label{subsec:3D:Galerkin:NSE} 

We now address the Galerkin approximation of the three-dimensional Navier-Stokes equations \eqref{subsec:setting} on a periodic spatial domain $\Omega$, on a time interval $I\subset\RR$ closed and bounded on the left, with left endpoint $t_0\in\RR$, and with $\f\in L^2_{\rloc}(I,H)$. Our aim is to apply \cref{Main2alt} to show that trajectory statistical solutions generated by the well-defined solution operator of the Galerkin approximations converge to a trajectory statistical solution of the 3D NSE. For the framework of \cref{subsubsec:3D}, the phase space is taken to be $X = H_\rw,$ so the trajectory space is $\mX=\Cloc(I,H_\rw)$. The set $\mU^\nu_I$ is that of the weak solutions of the 3D $\nu$-NSE on $I$ defined in \eqref{def:U:nse:3d}.

For each $m \in \NN$, let $P_m$ denote the projection of $H$ onto the finite-dimensional subspace of $H$ spanned by the first $m$ eigenfunctions $\bw_1,\ldots, \bw_m$ of the Stokes operator which, on the periodic case, coincides with the negative Laplacian operator $-\Delta$ on $V \cap H^2(\Omega)^3$. The Galerkin approximation in $P_m H$ of the 3D NSE \eqref{nse} is defined as 
\begin{align}\label{eq:Galerkin:nse}
	\partial_t \bu_m - \nu \Delta \bu_m + P_m [(\bu_m \cdot \nabla) \bu_m ] + \nabla p_m = P_m \f, \quad \nabla \cdot \bu_m  = 0,
\end{align}
see e.g. \cite{temam1995} for more details. 

By taking the inner product of the first equation in \eqref{eq:Galerkin:nse} with each of the eigenfunctions $\bw_1, \ldots, \bw_m$ and writing $\bu_m(t) = \sum_{i=1}^m \alpha_{i m}(t) \bw_i$, it follows that the Galerkin approximation is equivalent to a system of $m$ ordinary differential equations on $I$ of the form $\balpha_t = \bF(t, \balpha)$, where $\balpha_m = (\alpha_{1m}, \ldots, \alpha_{mm})$. The right-hand side $\bF$ is quadratic (hence locally Lipschitz) in $\balpha$ and, due to $\f \in  L^2_{\rloc}(I,H)$, $\bF$ is also measurable in $t$ and bounded by an integrable function of $t$ on every compact subset of $I \times \RR^m$. As such, we obtain, from the classical Carath\'eodory theory of existence and uniqueness of solutions for ODEs \cite{CoddingtonLevinson1987,Hale1980}, an absolutely continuous function $\bu_m$ on $[t_0, t_m]$, for some $t_m \in I$, which satisfies \eqref{eq:Galerkin:nse} a.e. in $t$ and a given initial condition $\bu_m(t_0) = \bu_{0,m} \in P_m H$. Moreover, from standard energy estimates, it follows that $\bu_m$ satisfies the following inequality for any compact subinterval $J \subset I$ with left endpoint $t_0$ and containing $[t_0, t_m]$:
\begin{align*}
	|\bu_m(t)|^2 \leq |\bu_{0,m}|^2 e^{-\nu \lambda_1 (t-t_0)} + \frac{1}{\nu \lambda_1} \|\f\|^2_{L^2(J,H)} 
\end{align*}
for every $t \in [t_0, t_m]$. This implies that $\bu_m$ is uniformly bounded in $t$, and consequently $\bu_m,$ in fact, exists and is unique for all $t \in I$, with $\bu_m \in \mC_{\rloc}(I,P_m H)$. 

Therefore, we can define a solution operator associated to \eqref{eq:Galerkin:nse}, given as
\begin{eqnarray}\label{def:Sm:Gal}
	S_m: H_\rw  &\rightarrow &\mC_{\rloc}(I,P_m H) \subset \mC_{\rloc}(I,H_\rw) \nonumber \\
	\bu_0 &\mapsto& \bu_m,
\end{eqnarray}
where $\bu_m = \bu_m(t)$ is the unique trajectory solving \eqref{eq:Galerkin:nse} on $I$ subject to the initial condition $\bu_m(t_0) = P_m \bu_0$. 

For the statistical solutions, we consider a sequence of initial measures $\{\mu_{0,m}\}_{m\in\NN}$ on $H_\rw$ associated with the Galerkin approximations. Since the initial conditions associated with the Galerkin operator $S_m$ belong to $P_m H,$ it is natural to assume that each $\mu_{0,m}$ is a Borel probability measure on $H_\rw$ (or, equivalently, on $H$) which is carried by $P_m H.$ For the sake of convergence, we also assume that these measures converge to a Borel probability measure on $H,$ in the sense of weak-star semicontinuity topology.

With this setting, we have the following convergence result.

\begin{theorem}\label{thm:conv:ss:Gal}
	Let $\nu > 0$ and let $I \subset \mathbb R$ be an interval that is closed and bounded on the left with left endpoint $t_0$. Assume that $\f \in  L^2_{\rloc}(I,H).$ Let $S_m: H_\rw \to \Cloc(I,P_m H)$, $m \in \NN$, and $\mU^\nu_I \subset \Cloc(I,H_\rw)$ be defined as in \eqref{def:Sm:Gal} and \eqref{def:U:nse:3d}, respectively. Let $\mu_0$ be a (tight) Borel probability measure on $H_\rw.$ Suppose $\{\mu_{0,m}\}_{m \in \NN}$ is a sequence of (tight) Borel probability measures on $H_\rw$ carried by $P_m H$ which is uniformly tight on $H_\rw$ and converges to $\mu_0$ in the sense of weak-star semicontinuity topology, i.e. $\mu_{0,m} \wconv \mu_0$ in $H_\rw.$ Then, the sequence of measures $\{S_m \mu_{0,m}\}_{m \in \NN}$ has a subsequence that converges, as $m \to \infty$, with respect to the weak-star semicontinuity topology, to a $\mU^\nu_I$-trajectory statistical solution $\rho$ of the 3D Navier-Stokes equations satisfying $\Pi_{t_0}\rho=\mu_0$.
\end{theorem}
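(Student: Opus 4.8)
The plan is to verify hypotheses \ref{Main2:H0}--\ref{Main2:H4} of \cref{Main2alt} with index $\ve=m\in\NN$, solution operators $S_\ve=S_m$ from \eqref{def:Sm:Gal}, approximating measures $\mu_\ve=\mu_{0,m}$, and limit set $\mU=\mU^\nu_I$. First I would check \ref{Main2:H0}: since $\Pi_{t_0}S_m\bu_0=S_m(\bu_0)(t_0)=P_m\bu_0$, the operator $P_\ve=\Pi_{t_0}S_m$ is exactly the Galerkin projection $P_m$; because $\mu_{0,m}$ is carried by $P_mH$, on which $P_m$ acts as the identity, one has $P_m\mu_{0,m}=\mu_{0,m}$, so the assumed convergence $\mu_{0,m}\wconv\mu_0$ gives \ref{Main2:H0} at once. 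For \ref{Main2:H2}, the uniform tightness of $\{\mu_{0,m}\}$ on $H_\rw$ supplies, for any prescribed $\delta_n\to 0$, a sequence of (weakly) compact sets $K_n\subset H_\rw$ with $\mu_{0,m}(H_\rw\setminus K_n)\leq\delta_n$ for all $m$; each such $K_n$ is bounded in $H$, say $K_n\subset B_H(R_n)$.

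Next I would establish \ref{Main2:H1} and \ref{Main2:H3}. For \ref{Main2:H1}, note that $\bu_0\mapsto P_m\bu_0$ is weak-to-strong continuous from $H$ into the finite-dimensional space $P_mH$ (where the weak and strong topologies coincide), and the Galerkin system is a finite-dimensional Carath\'eodory ODE whose solution depends continuously on the initial datum, uniformly on the bounded set $K_n$ on which the energy estimate confines the trajectories; composing, $S_m|_{K_n}\colon K_n\to\Cloc(I,P_mH)\hookrightarrow\Cloc(I,H_\rw)$ is continuous. For \ref{Main2:H3}, the energy \emph{equality} satisfied by the Galerkin solutions (the projected nonlinear term is orthogonal to $\bu_m$) yields, exactly as in \eqref{apriori:est:1} of \cref{prop:apriori:est:3D}, a bound on $\bu_m$ in $L^\infty_{\rloc}(I,H)\cap L^2_{\rloc}(I,V)$ uniform in $m$ for initial data in $B_H(R_n)$. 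The one point requiring care is the time-regularity bound: testing the Galerkin equation against $P_m\bv$ forces the factor $\|\nabla P_m\bv\|_{L^\infty}$, which is not controlled uniformly in $m$ by $\|\bv\|_{\Winf}$, so the clean $L^2(\Winfp)$ bound of \eqref{apriori:est:2} is unavailable here. Instead I would bound $\partial_t\bu_m$ uniformly in $L^{4/3}_{\rloc}(I,V')$ using the standard three-dimensional estimate $\|(\bu_m\cdot\nabla)\bu_m\|_{V'}\leq c|\bu_m|^{1/2}\|\bu_m\|^{3/2}$, which gives a H\"older-$1/4$ modulus of continuity of $\bu_m$ in $V'$. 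Combining this modulus with the uniform $H$-bound and the density of $V$ in $H$, an Arzel\`a--Ascoli argument identical in spirit to \cref{3YJRcompact} produces, for each $n$, a compact set $\mK_n\subset\Cloc(I,H_\rw)$ with $S_m(K_n)\subset\mK_n$ for all $m$.

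The heart of the proof is \ref{Main2:H4}, namely that every element of $\limsup_m S_m(K_n)$ is a Leray--Hopf weak solution of the 3D $\nu$-NSE, which I would isolate as a convergence proposition parallel to \cref{thm3dNStoEuler}. Given $\bu=\lim_j\bu_{m_j}$ in $\Cloc(I,H_\rw)$ with $\bu_{m_j}=S_{m_j}\bu_{0,j}$, $\bu_{0,j}\in K_n$ and $m_j\to\infty$, the uniform bounds above together with the compact embedding $V\hookrightarrow\hookrightarrow H\hookrightarrow V'$ let me invoke the Aubin--Lions lemma to upgrade to strong convergence $\bu_{m_j}\to\bu$ in $L^2_{\rloc}(I,H)$. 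This strong convergence controls the nonlinear term $\int\!\int\bu_{m_j}\otimes\bu_{m_j}:\nabla\Phi$, while the viscous and forcing terms pass to the limit by weak convergence in $L^2_{\rloc}(I,V)$; since $P_{m_j}\Phi\to\Phi$ for divergence-free test fields $\Phi$, the projection disappears and one recovers the weak formulation \eqref{eq3b3}. The energy inequality \cref{3NSweak}\ref{energy:ineq:3D:nse} then follows from the Galerkin energy equality by weak lower semicontinuity of the $H$-norm and of the viscous dissipation, using strong $L^2_{\rloc}(I,H)$ convergence at almost every time to identify $|\bu(t')|^2$ at the allowed times $t'$.

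The main obstacle is condition \cref{3NSweak}\ref{3nsedefweaksolv}, the strong right-continuity of $\bu$ at $t_0$, since only weak (not strong) uniform tightness is assumed. Passing to a weak limit $\bu_{0,j}\rightharpoonup\bu_0^{\ast}$ along a subnet gives $\bu(t_0)=\bu_0^{\ast}$, but the limiting initial energy $E_0=\lim_j|P_{m_j}\bu_{0,j}|^2$ may strictly exceed $|\bu(t_0)|^2$, so the energy inequality is a priori only available from $t_0$ with the larger constant $E_0$, which is too weak to yield strong continuity. The resolution I would pursue is to read the Galerkin energy equality carefully and show that the excess $E_0-|\bu(t_0)|^2$ is matched \emph{instantaneously} by a jump into the viscous dissipation integral as $j\to\infty$ (the energy carried by the high modes on which the weak limit loses mass is dissipated in the limit), so that $\bu$ in fact inherits the energy inequality from $t_0$ with its own value $|\bu(t_0)|^2$; combined with weak continuity at $t_0$ this forces $|\bu(t)|\to|\bu(t_0)|$ and hence \ref{3nsedefweaksolv}. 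This energy-balance step is the delicate part of the argument. Once \ref{Main2:H0}--\ref{Main2:H4} are in place, \cref{Main2alt} delivers a subsequence of $\{S_m\mu_{0,m}\}$ converging, in the weak-star semicontinuity topology, to a $\mU^\nu_I$-trajectory statistical solution $\rho$ with $\Pi_{t_0}\rho=\mu_0$.
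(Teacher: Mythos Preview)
Your plan is precisely the paper's: verify \ref{Main2:H0}--\ref{Main2:H4} of \cref{Main2alt} with $S_\ve=S_m$ and $\mU=\mU^\nu_I$. You match the paper exactly on \ref{Main2:H0}--\ref{Main2:H2}, and your observation that the uniform $(\Winf)'$ time-regularity bound is unavailable (because $\nabla P_m\bv$ is not controlled in $L^\infty$) is well taken; the paper likewise switches to a $V'$-based modulus of continuity, cf.\ \eqref{ineq:diff:um:t:s}--\eqref{ineq:YI:Gal:2}, and builds the compact sets $\mK_n$ from the corresponding auxiliary family $\mY_J(R)$, exactly as you outline. The passage to the limit in the weak formulation and in the energy inequality for a.e.\ $t'$ via Aubin--Lions and strong $L^2_\rloc(I,H)$ convergence is also what the paper invokes (by reference to Temam) for \ref{Main2:H4}.

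Where your proposal goes further than the paper is in singling out \cref{3NSweak}\ref{3nsedefweaksolv} as the delicate point: since $K_n$ is only compact in $H_\rw$, the Galerkin initial data $P_{m_j}\bu_{0,j}$ converge to $\bu(t_0)$ merely weakly, so $E_0=\lim_j|P_{m_j}\bu_{0,j}|^2$ may exceed $|\bu(t_0)|^2$, and the Temam argument (which uses $P_m\bu_0\to\bu_0$ strongly) does not literally apply. The paper does not flag this and simply cites the standard construction. Your instinct that the excess energy is dissipated ``instantaneously'' by the high modes is morally correct, but the mechanism you describe is not: the excess does \emph{not} land in the limiting dissipation integral $2\nu\int_{t_0}^t\|\bu\|^2$. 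Passing to the limit in the Galerkin energy equality yields only
\[
|\bu(t)|^2+2\nu\int_{t_0}^t\|\bu\|^2\;\leq\;E_0+2\int_{t_0}^t(\f,\bu),
\]
since the $\liminf$ of $2\nu\int\|\bu_{m_j}\|^2$ may strictly exceed $2\nu\int\|\bu\|^2$; the gap $E_0-|\bu(t_0)|^2$ is absorbed in that inequality, not transferred to the limit. What \emph{does} give \ref{3nsedefweaksolv} is to run the energy argument from a.e.\ $t'>t_0$ (where $\bu_{m_j}(t')\to\bu(t')$ strongly by the $L^2_\rloc(I,H)$ convergence), obtain $|\bu(t)|^2\leq|\bu(t')|^2+2\int_{t'}^t(\f,\bu)$ for such $t'$, and then observe that the limit $\bu$ itself (not the approximants) satisfies the weak equation on $I$ with $\bu\in L^2_\rloc(I,V)$ and $\partial_t\bu\in L^{4/3}_\rloc(I,V')$; the smoothing of the Stokes semigroup then gives $\bu(t)\to\bu(t_0)$ in $H$ as $t\to t_0^+$. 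So your identification of the obstacle is sharper than the paper's treatment, but the ``jump into dissipation'' heuristic should be replaced by this intrinsic argument on the limit $\bu$.
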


\begin{proof}
    We proceed to verify that assumptions \ref{Main2:H0}-\ref{Main2:H4} of \cref{Main2alt} hold under this setting. 
    
    First, from the definition \eqref{def:Sm:Gal} of the Galerkin semigroup as the weak solution of the Galerkin approximation \eqref{eq:Galerkin:nse} with the initial condition $\bu_m(t_0) = P_m \bu_0,$ it follows that the operator $\Pi_{t_0} S_m$ considered in the statement of \cref{Main2alt} is such that $\Pi_{t_0}S_m\bu_0 = \bu_m(t_0) = P_m\bu_0,$ so that this operator is precisely the Galerkin projector, i.e. $\Pi_{t_0} S_m = P_m.$ Thus, condition \ref{Main2:H0} reads $P_m\mu_{0,m} \wconv \mu_0.$ Since $\mu_{0, m}$ is assumed to be carried by $P_m H,$ we thus have $P_m\mu_{0, m} = \mu_{0, m}.$ Indeed, for any Borel set $A$ in $H$, we have
    \begin{align*}
        P_m \mu_{0,m} (A) = \mu_{0,m} (P_m^{-1} A)
        = \mu_{0,m} (P_m^{-1} A \cap P_m H) 
        = \mu_{0,m} (A \cap P_m H ) = \mu_{0,m} (A),
    \end{align*}
    where, in the second and fourth equalities, we used the fact that $\mu_{0,m}$ is carried by $P_m H,$ while, in the third equality, we used that $P_m$ is a projection operator, so that $P_m^{-1}A \cap P_m H = A\cap P_m H.$
    
    Thus, since $P_m\mu_{0, m} = \mu_{0, m},$ condition \ref{Main2:H0} is precisely the assumption that we have, i.e. that $\mu_{0,m} \wconv \mu_0.$ Hence, condition \ref{Main2:H0} is satisfied.
    
    From the equivalence between \eqref{eq:Galerkin:nse} and the system of $m$ ordinary differential equations $\balpha_t  = \bF(t, \balpha)$, together with the properties of $\bF$ recalled above, it follows again from classical ODE theory that any solution $\bu_m$ of \eqref{eq:Galerkin:nse} depends continuously on initial data. This implies that the solution operator $S_m: H_\rw  \to \mC_{\rloc}(I,P_m H) \subset \mC_{\rloc}(I,H_\rw)$ is continuous, and hence assumption \ref{Main2:H1} is verified.
    
    The validity of assumption \ref{Main2:H2} follows immediately from the condition that the sequence of initial measures $\{\mu_{0,m}\}_m$ is uniformly tight in $H_\rw$. Indeed, given any sequence $\delta_n \rightarrow 0,$ there is a corresponding sequence of compact sets $K_n$ in $H_\rw$, $n \in \NN$, such that \ref{Main2:H2} holds. Since, as we show next, the remaining assumptions hold for any compact set $K$ in $H_\rw$, they hold in particular for such sequence.

    To establish \ref{Main2:H3} and \ref{Main2:H4}, we first define an auxiliary space analogous to \eqref{def:YIR:3D}. Since, in the current setting, $\nu$ is a fixed parameter, we may invoke a different set of inequalities than \eqref{ineq:YI:1}-\eqref{ineq:YI:2} to define this auxiliary space, which yield estimates in stronger norms. These alternative inequalities are indeed necessary for guaranteeing compactness of such auxiliary space in the topology of $L^2_\rloc(I,H)$. This in turn allows us to obtain a result analogous to \cref{thm3dNStoEuler}, showing that individual solutions of the Galerkin approximations converge to a Leray-Hopf weak solution of 3D NSE. 
    
    More precisely, under the present framework, we have that for any $m \in \NN$ and any solution $\bu_m$ of \eqref{eq:Galerkin:nse} on $I$, the following inequalities hold:
        \begin{align}\label{ineq:energy:Gal}
        |\bu_m(t)|^2 + \nu \int_{t_0}^t \| \bu_m(\tau)\|^2 \rd \tau \leq |\bu_m(t_0)|^2 + \frac{1}{\nu \lambda_1 } \| \f \|^2_{L^2(t_0, t; H)},
    \end{align}
    for all $t \in I$, and 
    \begin{multline}\label{ineq:diff:um:t:s}
        \| \bu_m(t) - \bu_m(s) \|_{V'} \leq c \nu^{1/2} |t-s|^{1/2} \left( |\bu_m(t_0)|^2 + \frac{1}{\nu \lambda_1} \| \f \|^2_{L^2(t_0, t; H)} \right)^{1/2} \\
        + c \nu^{-3/4} |t-s|^{1/4}  \left( |\bu_m(t_0)|^2 + \frac{1}{\nu \lambda_1} \| \f \|^2_{L^2(t_0, t; H)} \right),
    \end{multline}
    for all $s,t \in I$ with $s \leq t$, and for some positive constant $c$ which is independent of $m$.
    
    The proof of \eqref{ineq:energy:Gal} follows from typical energy estimates, see e.g. \cite[Chapter 3]{temam1995}. Inequality \eqref{ineq:diff:um:t:s} then follows by proceeding similarly as in the proof of \eqref{apriori:est:2} in \cref{prop:apriori:est:3D} and invoking \eqref{ineq:energy:Gal}. We omit further details. 
    
    Now, given an arbitrary $R > 0$ and an arbitrary subinterval $J \subseteq I$ that is closed and bounded on the left with left endpoint $t_0$, consider the following inequalities for $\bu \in \mC_{\rloc}(J,H_\rw)$:
    \begin{align}\label{ineq:YI:Gal:1}
        |\bu(t)|^2 + \nu \int_{t_0}^t \| \bu(\tau) \|^2 \rd \tau \leq R^2 + \frac{1}{\nu \lambda_1 } \| \f \|^2_{L^2(t_0, t; H)},
    \end{align}
    for $t \in J$, and
    \begin{multline}\label{ineq:YI:Gal:2}
        \| \bu(t) - \bu(s) \|_{V'} \leq c \nu^{1/2} |t-s|^{1/2} \left( R^2 + \frac{1}{\nu \lambda_1} \| \f \|^2_{L^2(t_0, t; H)} \right)^{1/2} \\
        + c \nu^{-3/4} |t-s|^{1/4}  \left( R^2 + \frac{1}{\nu \lambda_1} \| \f \|^2_{L^2(t_0, t; H)} \right),
    \end{multline}
    for $s,t \in J$ with $s < t$, where $c$ is the same constant from \eqref{ineq:diff:um:t:s}. Based on these, we define the set 
    \begin{align}\label{def:YIR:Gal}
        \mY_{J}(R) =  \left\{ \bu \in \Cloc(J,H_\rw) : \bu \mbox{ satisfies } \eqref{ineq:YI:Gal:1} \mbox{ for all $t \in J$ and } \eqref{ineq:YI:Gal:2} \mbox{ for all $s < t$ in $J$} \right\}.
    \end{align}
    
    Then, the same characterization as in \eqref{eq:YI:R:Jn} holds for $\mY_I(R)$ in this case, and by analogous arguments as in \cref{3YJRcompact} we deduce that $\mY_I(R)$ is a compact subset of $\Cloc(I,H_\rw)$. Moreover, analogously as in \cref{SnuKinmK}, we can invoke the inequalities \eqref{ineq:energy:Gal} and \eqref{ineq:diff:um:t:s} to show that for any compact set $K$ in $H_\rw$ and any $R > 0$ such that $K \subset B_H(R)$, it holds that $S_m(K) \subset \mY_I(R)$ for all $m \in \NN$. This shows that assumption \ref{Main2:H3} is satisfied.
    
    Finally, to verify assumption \ref{Main2:H4}, we argue similarly to the the proof of \cref{cor:conditions2DNS}. Specifically, given a compact set $K$ in $H_\rw$, let $R > 0$ such that $K \subset B_H(R)$. As we showed in the verification of \ref{Main2:H3}, this implies that $S_m(K) \subset \mY_I(R)$ for all $m$, and hence $\limsup_m S_m(K) \subset \mY_I(R)$. Since $\mY_I(R)$ is metrizable, given $\bu \in \limsup_m S_m(K)$ there exists a sequence $\{\bu_{m_j}\}_{j \in \NN}$ such that $\bu_{m_j} \in S_{m_j}(K)$ for all $j$, and $\bu_{m_j} \to \bu$ in $\mY_I(R)$ as $j \to \infty$. Consequently, $\bu_{m_j} \to \bu$ in $\mC_\rloc(I, H_\rw)$ as $j \to \infty$. 
    
    Moreover, from the fact that $\bu_{m_j} \in S_{m_j}(K) \subset \mY_I(R)$ for all $j$ and from the definition of $\mY_I(R)$ in \eqref{def:YIR:Gal}, it follows that the sequence $\{\bu_{m_j}\}_j$ has the uniform upper bounds implied by \eqref{ineq:YI:Gal:1} and \eqref{ineq:YI:Gal:2} on every compact subinterval $J \subset I$ with left endpoint $t_0$. Then, standard compactness arguments yield that $\bu \in L^2_{\rloc}(I,V)$ and, modulo a subsequence, $\bu_{m_j} \to \bu$ in $L^2_{\rloc}(I,H)$. Combining all these facts, we may thus pass to the limit $m \to \infty$ in the weak formulation of \eqref{eq:Galerkin:nse} and deduce that $\bu$ is a Leray-Hopf weak solution of the 3D NSE, as defined in \cref{3NSweak}. See e.g. \cite[Chapter 3, Section 3]{temam1995} for similar arguments. Therefore, $\bu \in \mU^\nu_I$, and we conclude that $\limsup_m S_m(K) \subset \mU^\nu_I$. This shows that \ref{Main2:H4} holds.
    
    We have thus verified all the assumptions from \cref{Main2alt}, which then yields the desired result.
\end{proof}

\begin{remark}\label{rmk:mu0m:alt:cond}
	Note that if $\{\mu_{0,m}\}_m$ is a sequence of Borel probability measures on $H_\rw$ which converges to a Borel probability measure $\mu_0$ with respect to the weak-star semicontinuity topology in $H$ (with the strong topology), then it also converges in the weak-star topology in $H$, see \cref{portmanteau}. Since $H$ is a Polish space, then by Prohorov's theorem \cite{prohorov} the relatively compact subsets of $\mP(H)$ coincide with the uniformly tight ones. In particular, it follows that $\{\mu_{0,m}\}_m$ is uniformly tight in $H$, and consequently in $H_\rw$. Therefore, if $\mu_{0,m} \wconv \mu_0$ in $H$ for some $\mu_0 \in \mP(H)$ then the condition from \cref{thm:conv:ss:Gal} that $\{\mu_{0,m}\}_m$ is uniformly tight on $H_\rw$ is immediately satisfied.
\end{remark}

\begin{remark}\label{rmk:Pm:mu0}
    In the statement of \cref{thm:conv:ss:Gal}, the conditions imposed on the initial approximating measures $\mu_{0,m}$ are, in a sense, generic. In practice, one would want to start with something more specific. For example, given an initial tight Borel probability measure $\mu_0$ of interest for the limit problem, we may consider the Galerkin projections $\mu_{0,m} = P_m\mu_0$ of that measure. Note that $P_m\mu_0$ is carried by $P_m H_\rw.$ Let us verify that such approximating measures satisfy the remaining conditions of \cref{thm:conv:ss:Gal}.

	In order to see that $\{P_m \mu_0\}_m$ is uniformly tight in $H_\rw$, fix $\delta > 0 $ and let $K$ be a compact set in $H_\rw$ such that $\mu_0(H \backslash K) < \delta$. Let also $R > 0$ be such that $K \subset B_H(R)$. The subset $B_H(R)$ is a compact set in $H_\rw$, and 
	\begin{align*}
		P_m \mu_0 (H \backslash B_H(R)) = \mu_0(P_m^{-1} (H \backslash B_H(R))) =\mu_0 (H \backslash P_m^{-1} B_H(R)) \leq \mu_0(H \backslash B_H(R)) \\
		\leq \mu_0 (H \backslash K) < \delta 
	\end{align*}
	for all $m$, where in the first inequality we used that $P_m B_H(R) \subset B_H(R)$. This shows that $\{P_m \mu_0\}_m$ is uniformly tight in $H_\rw$. 
	
	The second condition is that $P_m \mu_0 \wconv \mu_0$ in $H_\rw.$ This can be seen by noting that $P_m \bu_0 \to \bu_0$ in $H,$ for all $\bu_0 \in H$, and hence, by the Dominated Convergence Theorem, we have that for every bounded and continuous real-valued function $\varphi$ on $H$
    \[
        \int_H \varphi(\bu) \rd(P_m \mu_0)(\bu) = \int_H \varphi(P_m \bu) \rd\mu_0(\bu) \to \int_H \varphi(\bu) \rd\mu_0(\bu),
    \]
    so that $P_m \mu_0 \wsconv \mu_0$ in $H$. Since $H$ is completely regular, it follows from \cref{portmanteau} that $P_m \mu_0 \wconv \mu_0$ in $H.$ Moreover, since any open set in $H_\rw$ is open in $H,$ we obtain from the equivalence between conditions \ref{PMwscstar} and \ref{PMlimopen} in \cref{portmanteau} that $P_m \mu_0 \wconv \mu_0$ in $H_\rw.$
\end{remark}

\begin{remark}\label{rmk:mu0m:montecarlo}
    Another useful practical example is with a Monte-Carlo approximation $\mu_0^N = (1/N) \sum_{k=1}^{N} \delta_{\bu_k}$, $\bu_k \in H,$ of a desired initial (tight) Borel probability measure $\mu_0$ on $H_\rw.$ The convergence $P_m\mu_{0, N_m} \wconv \mu_0$ for a suitable subsequence $\{N_m\}_m$ is a delicate issue, though, but it can be proved in some cases. For a related result for the two-dimensional Navier-Stokes equations and a Gaussian initial measure with the eigenvalues of the covariance operator decaying sufficiently fast, see \cite{DBLP:conf/mcqmc/BarthSS14}. This will be further discussed in subsequent works. 
\end{remark}

\begin{remark}\label{rmk:exist:ss:3D:NSE}
	As a byproduct of \cref{thm:conv:ss:Gal}, we obtain, for any given initial measure $\mu_0$, the existence of a $\mU^\nu_I$-trajectory statistical solution $\rho$ of the 3D Navier-Stokes equations satisfying the initial condition $\Pi_{t_0}\rho=\mu_0$. \cref{thm:conv:ss:Gal} thus provides an alternative proof of this fact to the one previously given in \cite[Theorem 4.2]{BMR2016} (see also \cite{FRT2013}), where existence was shown via an approximation by convex combinations of Dirac measures, by invoking the Krein-Milman theorem together with a tightness argument. Here, existence is derived instead via convergence of standard Galerkin approximations.
\end{remark}


\section*{Acknowledgements}
ACB received support under the grants \#2019/02512-5, S\~ao Paulo Research Foundation (FAPESP), \#312119/2016-0, Conselho Nacional de Desenvolvimento Cient\'ifico  e Tecnol\'ogico (CNPq), and FAEPEX/\\
UNICAMP. CFM was supported by the grants NSF-DMS 2009859 and
NSF-DMS 2239325. RMSR received support under the grants Coordena\c{c}\~ao de Aperfei\c{c}oamento de Pessoal de N\'{\i}vel Superior (CAPES), Brasil, \#001, and Conselho Nacional de Desenvolvimento Cient\'{\i}fico e Tecnol\'ogico (CNPq), Bras\'{\i}lia, Brasil, \#408751/2023-1.


\bibliographystyle{plain}
\bibliography{refs}

\begin{thebibliography}{10}

\bibitem{AlbrittonBrueColombo2022}
D.~Albritton, E.~Bru{\'e}, and M.~Colombo.
\newblock {Non-uniqueness of Leray solutions of the forced Navier-Stokes
  equations}.
\newblock {\em Annals of Mathematics}, 196(1):415--455, 2022.

\bibitem{AB}
C.D. Aliprantis and K.C. Border.
\newblock {\em {Infinite Dimensional Analysis: A Hitchhiker's Guide}}.
\newblock Springer-Verlag, 3rd edition, 2006.

\bibitem{Barbablah1995}
A.~Barbaszewska-Wisniowska.
\newblock {On statistical solutions of the Navier-Stokes type equation}.
\newblock {\em Univ. Iagel. Acta Math}, (32):291--304, 1995.

\bibitem{DBLP:conf/mcqmc/BarthSS14}
A.~Barth, C.~Schwab, and J.~Sukys.
\newblock Multilevel monte carlo simulation of statistical solutions to the
  navier-stokes equations.
\newblock In R.~Cools and D.~Nuyens, editors, {\em Monte Carlo and Quasi-Monte
  Carlo Methods, {MCQMC} 2014, Leuven, Belgium, April 2014}, volume 163 of {\em
  Springer Proceedings in Mathematics and Statistics}, pages 209--227.
  Springer, 2014.

\bibitem{Basson2006}
A.~Basson.
\newblock {Homogeneous statistical solutions and local energy inequality for 3D
  Navier-Stokes equations}.
\newblock {\em Communications in mathematical physics}, 266:17--35, 2006.

\bibitem{B93}
G.~Beer.
\newblock {\em Topologies on closed and closed convex sets}, volume 268.
\newblock Springer Science \& Business Media, 1993.

\bibitem{BCFM1995}
H.~Bercovici, P.~Constantin, C.~Foias, and O.P. Manley.
\newblock Exponential decay of the power spectrum of turbulence.
\newblock {\em Journal of Statistical Physics}, 80:579--602, 1995.

\bibitem{BFMT2019}
A.~Biswas, C.~Foias, C.F. Mondaini, and E.S. Titi.
\newblock {Downscaling data assimilation algorithm with applications to
  statistical solutions of the Navier--Stokes equations}.
\newblock {\em Annales de l'Institut Henri Poincar{\'e} C, Analyse non
  lin{\'e}aire}, 36(2):295--326, 2019.

\bibitem{Bogachev2007}
V.I. Bogachev.
\newblock {\em Measure theory}, volume~2.
\newblock Springer, 2007.

\bibitem{BreitFeireislHofmanova2020}
D.~Breit, E.~Feireisl, and M.~Hofmanov{\'a}.
\newblock {Markov selection for the stochastic compressible Navier–Stokes
  system}.
\newblock {\em Annals of Applied Probability}, 30(6):2547--2572, 2020.

\bibitem{BronziRosa2014}
A.C. Bronzi, C.F. Mondaini, and R.M.S. Rosa.
\newblock {On the convergence of statistical solutions of the 3D
  Navier-Stokes-$\alpha$ model as $\alpha$ vanishes}.
\newblock {\em Discrete and Continuous Dynamical Systems}, 34(1):19--49, 2014.

\bibitem{BMR2014}
A.C. Bronzi, C.F. Mondaini, and R.M.S. Rosa.
\newblock {Trajectory Statistical Solutions for Three-Dimensional
  Navier--Stokes-Like Systems}.
\newblock {\em SIAM Journal on Mathematical Analysis}, 46(3):1893--1921, 2014.

\bibitem{BMR2016}
A.C. Bronzi, C.F. Mondaini, and R.M.S. Rosa.
\newblock Abstract framework for the theory of statistical solutions.
\newblock {\em Journal of Differential Equations}, 260(12):8428--8484, 2016.

\bibitem{BuckmasterColomboVicol2020}
T.~Buckmaster, M.~Colombo, and V.~Vicol.
\newblock {Wild solutions of the Navier--Stokes equations whose singular sets
  in time have Hausdorff dimension strictly less than 1}.
\newblock {\em Journal of the European Mathematical Society}, 24(9):3333--3378,
  2021.

\bibitem{BuckmasterVicol2019}
T.~Buckmaster and V.~Vicol.
\newblock {Nonuniqueness of weak solutions to the Navier-Stokes equation}.
\newblock {\em Annals of Mathematics}, 189(1):101--144, 2019.

\bibitem{BIbook}
R.~Burachik and A.~Iusem.
\newblock {\em Set-valued mappings and enlargements of monotone operators},
  volume~8 of {\em Springer Optimization and Its Applications}.
\newblock Springer, 2008.

\bibitem{CardonaKapitanskii2020}
J.E. Cardona and L.~Kapitanski.
\newblock {Semiflow selection and Markov selection theorems}.
\newblock {\em Topological Methods in Nonlinear Analysis}, 56(1):197 -- 227,
  2020.

\bibitem{Chae1991}
D.~Chae.
\newblock {The vanishing viscosity limit of statistical solutions of the
  Navier-Stokes equations. II. The general case}.
\newblock {\em J. Math. Anal. Appl.}, 155(2):460--484, 1991.

\bibitem{CheskidovLuo2022}
A.~Cheskidov and X.~Luo.
\newblock {Sharp nonuniqueness for the Navier--Stokes equations}.
\newblock {\em Inventiones mathematicae}, 229(3):987--1054, 2022.

\bibitem{CoddingtonLevinson1987}
E.A. Coddington and N.~Levinson.
\newblock {\em {Theory of Ordinary Differential Equations}}.
\newblock Tata McGraw-Hill Publishing Co. Ltd., 1987.

\bibitem{bookcf1988}
P.~Constantin and C.~Foias.
\newblock {\em {Navier-Stokes equations}}.
\newblock Chicago Lectures in Mathematics. University of Chicago Press,
  Chicago, IL, 1988.

\bibitem{CFM94}
P.~Constantin, C.~Foias, and O.P. Manley.
\newblock {Effects of the forcing function spectrum on the energy spectrum in
  2-D turbulence}.
\newblock {\em Physics of Fluids}, 6(1):427--429, 1994.

\bibitem{ConstantinRamos2007}
P.~Constantin and F.~Ramos.
\newblock {Inviscid limit for damped and driven incompressible Navier-Stokes
  equations in $\mathbb{R}^2$}.
\newblock {\em Communications in Mathematical Physics}, 275(2):529--551, 2007.

\bibitem{DLS2010}
C.~De~Lellis and L.~Sz{\'e}kelyhidi~Jr.
\newblock {On admissibility criteria for weak solutions of the Euler
  equations}.
\newblock {\em Archive for Rational Mechanics and Analysis}, 195:225--260,
  2010.

\bibitem{DiPernaMajda1987}
R.J. DiPerna and A.J. Majda.
\newblock {Concentrations in regularizations for 2-D incompressible flow}.
\newblock {\em Communications on Pure and Applied Mathematics}, 40(3):301--345,
  1987.

\bibitem{DoeringTiti1995}
C.R. Doering and E.S. Titi.
\newblock {Exponential decay rate of the power spectrum for solutions of the
  Navier--Stokes equations}.
\newblock {\em Physics of Fluids}, 7(6):1384--1390, 1995.

\bibitem{Edwards1965}
R.E. Edwards.
\newblock {\em Functional analysis: theory and applications}.
\newblock Holt, Rinehart and Winston, Inc., New York, 1965.

\bibitem{FanelliFeireisl2020}
F.~Fanelli and E.~Feireisl.
\newblock {Statistical solutions to the barotropic Navier--Stokes system}.
\newblock {\em Journal of Statistical Physics}, 181:212--245, 2020.

\bibitem{FjordholmLanthalerMishra2017}
U.S. Fjordholm, S.~Lanthaler, and S.~Mishra.
\newblock Statistical solutions of hyperbolic conservation laws: foundations.
\newblock {\em Archive for Rational Mechanics and Analysis}, 226:809--849,
  2017.

\bibitem{FjordholmLyeMishraWeber2020}
U.S. Fjordholm, K.~Lye, S.~Mishra, and F.~Weber.
\newblock Statistical solutions of hyperbolic systems of conservation laws:
  numerical approximation.
\newblock {\em Mathematical Models and Methods in Applied Sciences},
  30(3):539--609, 2020.

\bibitem{FjordholmMishraWeber2024}
U.S. Fjordholm, S.~Mishra, and F.~Weber.
\newblock {On the vanishing viscosity limit of statistical solutions of the
  incompressible Navier--Stokes equations}.
\newblock {\em SIAM Journal on Mathematical Analysis}, 56(4):5099--5143, 2024.

\bibitem{FlandoliRomito2008}
F.~Flandoli and M.~Romito.
\newblock {Markov selections for the 3D stochastic Navier-Stokes equations}.
\newblock {\em Probab. Theory Related Fields}, 140:407--458, 2008.

\bibitem{Foias72}
C.~Foias.
\newblock {Statistical study of Navier-Stokes equations, I}.
\newblock {\em Rendiconti del Seminario matematico della Universit{\`a} di
  Padova}, 48:219--348, 1972.

\bibitem{Foias73}
C.~Foias.
\newblock {Statistical study of Navier-Stokes equations, II}.
\newblock {\em Rendiconti del Seminario Matematico della Universita di Padova},
  49:9--123, 1973.

\bibitem{Foias74}
C.~Foias.
\newblock A functional approach to turbulence.
\newblock {\em Russian Mathematical Surveys}, 29(2):293--326, 1974.

\bibitem{Foias97}
C.~Foias.
\newblock {What do the Navier-Stokes equations tell us about turbulence?}
\newblock {\em Contemporary Mathematics}, 208:151--180, 1997.

\bibitem{FMRT2001c}
C.~Foias, O.~Manley, R.~Rosa, and R.~Temam.
\newblock Estimates for the energy cascade in three-dimensional turbulent
  flows.
\newblock {\em Comptes Rendus Acad. Sci. Paris, S\'erie I}, 333:499--504, 2001.

\bibitem{FMRT2001}
C.~Foias, O.~Manley, R.~Rosa, and R.~Temam.
\newblock {\em {Navier--Stokes Equations and Turbulence}}, volume~83 of {\em
  Encyclopedia of Mathematics and its Applications}.
\newblock Cambridge University Press, Cambridge, 2001.

\bibitem{FoiasProdi1976}
C.~Foias and G.~Prodi.
\newblock {Sur les solutions statistiques des {\'e}quations de Navier-Stokes}.
\newblock {\em Ann. Mat. Pura Appl.}, 111(4):307--330, 1976.

\bibitem{FRT2013}
C.~Foias, R.~Rosa, and R.~Temam.
\newblock {Properties of time-dependent statistical solutions of the
  three-dimensional Navier-Stokes equations}.
\newblock {\em Annales de l'Institut Fourier}, 63(6):2515--2573, 2013.

\bibitem{Folland}
G.B. Folland.
\newblock {\em Real analysis: modern techniques and their applications},
  volume~40 of {\em Pure and Applied Mathematics}.
\newblock John Wiley \& Sons, Inc., New York, 2nd edition, 1999.

\bibitem{F60}
Z.~Frol{\'\i}k.
\newblock Concerning topological convergence of sets.
\newblock {\em Czechoslovak Mathematical Journal}, 10(2):168--180, 1960.

\bibitem{Fursikov99}
A.V. Fursikov.
\newblock {The closure problem for Friedman-Keller infinite chain of moment
  equations, corresponding to the Navier-Stokes system}.
\newblock In {\em Fundamental problematic issues in turbulence}, Trends in
  Mathematics, pages 17--24. Springer, 1999.

\bibitem{Golec1993}
J.~Golec.
\newblock On arsenev-type statistical solutions of differential equations with
  non-uniquely solvable cauchy problems.
\newblock {\em Univ. Iagel. Acta Math.}, 30:33--44, 1993.

\bibitem{Hale1980}
J.K. Hale.
\newblock {\em {Ordinary Differential Equations}}.
\newblock Robert E. Krieger Publishing Company, Inc., 1980.

\bibitem{Haragus1999}
D.~H\u{a}r\u{a}gu\c{s}.
\newblock {Statistical solutions for an abstract equation of Navier-Stokes
  type}.
\newblock {\em An. Univ. Timi\c{s}oara Ser. Mat.-Inform.}, 37(1):73--84, 1999.

\bibitem{IllnerWick1981}
R.~Illner and J.~Wick.
\newblock {Statistical solutions of differential equations with non-uniquely
  solvable Cauchy problems}.
\newblock {\em Journal of Differential Equations}, 41(3):289--300, 1981.

\bibitem{Kelley75}
J.L. Kelley.
\newblock {\em General topology}, volume~27 of {\em Graduate Texts in
  Mathematics}.
\newblock Springer-Verlag, 1975.

\bibitem{kelley2}
J.L. Kelley and T.P. Srinivasan.
\newblock {\em {Measure and integral, Vol. 1}}, volume 116 of {\em Graduate
  Texts in Mathematics}.
\newblock Springer-Verlag, 1988.

\bibitem{Kelliher2009}
J.P. Kelliher.
\newblock {Infinite-energy 2D statistical solutions to the equations of
  incompressible fluids}.
\newblock {\em Journal of Dynamics and Differential Equations}, 21:631--661,
  2009.

\bibitem{Lady}
O.A. Ladyzhenskaya.
\newblock The mathematical theory of viscous incompressible flow.
\newblock {\em Revised English edition, Translated from the Russian by Richard
  A. Silverman Gordon and Breach Science Publishers, New York-London}, 1963.

\bibitem{LadyVershik1977}
O.A. Ladyzhenskaya and A.M. Vershik.
\newblock {Sur l'{\'e}volution des mesures d{\'e}termin{\'e}es par les
  {\'e}quations de Navier-Stokes et la r{\'e}solution du probl{\`e}me de Cauchy
  pour l'{\'e}quation statistique de E. Hopf}.
\newblock {\em Ann. Scuola Norm. Sup. Pisa Cl. Sci.}, 4(2):209--230, 1977.

\bibitem{Lions1969}
J.L. Lions.
\newblock {\em Quelques m{\'e}thodes de r{\'e}solution des problemes aux
  limites non lin{\'e}aires}.
\newblock Dunod, Gauthier-Villars, Paris, 1969.

\bibitem{Lions2013}
P.L. Lions.
\newblock {\em {Mathematical topics in fluid mechanics, Volume 1:
  Incompressible models}}.
\newblock Number~3 in Oxford Lecture Series in Mathematics and its
  Applications. Clarendon Press, Oxford, 1996.

\bibitem{Mondainithesis}
C.F. Mondaini.
\newblock {\em Abstract framework for the theory of statistical solutions}.
\newblock PhD thesis, Federal University of Rio de Janeiro, Brazil, jun 2014.

\bibitem{prohorov}
Y.V. Prokhorov.
\newblock Convergence of random processes and limit theorems in probability
  theory.
\newblock {\em Theory of Probability \& Its Applications}, 1(2):157--214, 1956.

\bibitem{RRT08}
F.~Ramos, R.~Rosa, and R.~Temam.
\newblock Statistical estimates for channel flows driven by a pressure
  gradient.
\newblock {\em Physica D: Nonlinear Phenomena}, 237(10--12):1368--1387, 2008.

\bibitem{RamosTiti2010}
F.~Ramos and E.S. Titi.
\newblock {Invariant measures for the $3$D Navier-Stokes-Voigt equations and
  their Navier-Stokes limit}.
\newblock {\em Discrete and Continuous Dynamical Systems}, 28(1):375--403,
  2010.

\bibitem{Rosa2009}
R.~Rosa.
\newblock {\em Theory and applications of statistical solutions of the
  Navier–Stokes equations}, volume 364 of {\em London Mathematical Society
  Lecture Note Series}, pages 228--257.
\newblock Cambridge University Press, 2009.

\bibitem{Slawik2008}
L.~S{\l}awik.
\newblock {Note on some variational problem related to statistical solutions of
  differential equations in Banach spaces}.
\newblock {\em Annales Polonici Mathematici}, 93(2):171--176, 2008.

\bibitem{temam1995}
R.~Temam.
\newblock {\em {Navier–Stokes Equations and Nonlinear Functional Analysis}},
  volume~66 of {\em CBMS-NSF Regional Conference Series in Applied
  Mathematics}.
\newblock Society for Industrial and Applied Mathematics, Philadelphia, PA,
  1995.

\bibitem{temam84}
R.~Temam.
\newblock {\em {Navier-Stokes equations: theory and numerical analysis}}.
\newblock AMS Chelsea Publishing, Providence, Rhode Island, 2001.

\bibitem{Topsoebook}
F.~Tops{\o}e.
\newblock {\em {Topology and Measure}}, volume 133 of {\em Lecture Notes in
  Mathematics}.
\newblock Springer-Verlag, 1970.

\bibitem{VishikFursikov77}
M.I. Vishik and A.V. Fursikov.
\newblock {L'\'equation de Hopf, les solutions statistiques, les moments
  correspondant aux syst\'emes des \'equations paraboliques quasilin\'eaires}.
\newblock {\em J. Math. Pures Appl.}, 59(9):85--122, 1977.

\bibitem{VishikFursikov78}
M.I. Vishik and A.V. Fursikov.
\newblock {Translationally homogeneous statistical solutions and individual
  solutions with infinite energy of a system of Navier--Stokes equations}.
\newblock {\em Sibirskii Matematicheskii Zhurnal}, 19(5):1005--1031, 1978.

\bibitem{VTC2007}
M.I. Vishik, E.S. Titi, and V.V. Chepyzhov.
\newblock {On convergence of trajectory attractors of the 3D
  Navier-Stokes-$\alpha$ model as $\alpha$ approaches 0}.
\newblock {\em Sbornik: Mathematics}, 198(12):1703--1736, 2007.

\bibitem{WW2022}
R.~Wagner and E.~Wiedemann.
\newblock {Statistical solutions of the two-dimensional incompressible Euler
  equations in spaces of unbounded vorticity}.
\newblock {\em Journal of Functional Analysis}, 284(4):109777, 2023.

\bibitem{Wang2008}
X.~Wang.
\newblock {Stationary statistical properties of Rayleigh-B{\'e}nard convection
  at large Prandtl number}.
\newblock {\em Communications on Pure and Applied Mathematics}, 61(6):789--815,
  2008.

\bibitem{Wiedemann2011}
E.~Wiedemann.
\newblock {Existence of weak solutions for the incompressible Euler equations}.
\newblock {\em Annales de l'Institut Henri Poincar{\'e} C, Analyse non
  lin{\'e}aire}, 28(5):727--730, 2011.

\bibitem{Yudovich1963}
V.I. Yudovich.
\newblock Non-stationary flow of an ideal incompressible fluid.
\newblock {\em Zhurnal Vychislitel'noi Matematiki i Matematicheskoi Fiziki},
  3(6):1032--1066, 1963.

\end{thebibliography}


\end{document}